\renewcommand{\leq}{\le}
\renewcommand{\geq}{\ge}
\newcommand{\e}{\text{e}}
\def\d{{\rm d}}
\def\<{\langle}
\def\>{\rangle}
\newtheorem{theorem}{Theorem}[section]
\newtheorem{lemma}[theorem]{Lemma}
\newtheorem{proposition}[theorem]{Proposition}
\numberwithin{equation}{section}
\theoremstyle{definition}
\newtheorem{definition}[theorem]{Definition}
\begin{document}
\allowdisplaybreaks
\title[Polynomial lower bound on the effective resistance]
{\bfseries  Polynomial lower bound on the effective resistance for the one-dimensional critical long-range percolation}

\author{Jian Ding \qquad  Zherui Fan  \qquad  Lu-Jing Huang}

\thanks{\emph{J. Ding:}
School of Mathematical Sciences, Peking University, Beijing, China.
  \texttt{dingjian@math.pku.edu.cn}}
\thanks{\emph{Z. Fan:}
School of Mathematical Sciences, Peking University, Beijing, China.
  \texttt{1900010670@pku.edu.cn}}

\thanks{\emph{L.-J. Huang:}
School of Mathematics and Statistics, Fujian Normal University, Fuzhou, China.
  \texttt{huanglj@fjnu.edu.cn}}


\date{}
\maketitle

\begin{abstract}

In this work, we study the critical long-range percolation on $\mathds{Z}$, where an edge connects $i$ and $j$ independently with probability $1-\exp\{-\beta |i-j|^{-2}\}$ for some fixed $\beta>0$. Viewing this as a random electric network where each edge has a unit conductance, we show that with high probability the effective resistances from the origin 0 to $[-N, N]^c$ and from the interval $[-N,N]$ to $[-2N,2N]^c$ (conditioned on no edge joining $[-N,N]$ and $[-2N,2N]^c$) both have a polynomial lower bound in $N$. Our bound holds for all $\beta>0$ and thus rules out a potential phase transition (around $\beta = 1$) which seemed to be a reasonable possibility.

\noindent \textbf{Keywords:} Long-range percolation, effective resistance, polynomial lower bound

\medskip

\noindent \textbf{MSC 2020:} 60K35, 82B27, 82B43

\end{abstract}
\allowdisplaybreaks


\section{Introduction}\label{intro}

Consider the critical long-range percolation (LRP) on $\mathds{Z}$, where edges $\langle i,j\rangle $ with $|i-j|=1$ (i.e.\ $i$ and $j$ are nearest neighbors) occur independently with probability $p_1$, while edges $\langle i,j\rangle $ with $|i-j|>1$ (which we refer to as long edges in what follows) occur independently with probability
\begin{equation}\label{LRP}
1-\exp\left\{-\beta\int_i^{i+1}\int_j^{j+1}\frac{1}{|u-v|^2}\d u\d v\right\}.
\end{equation}
Here, $\beta>0$ is a parameter of this LRP model.
We also call this model as a $\beta$-LRP model, and denote $\mathcal{E}$ for the edge set. Additionally, for ease of notation, we will also use $\sim$ to denote edges, that is, $i\sim j$ implies $\langle i,j \rangle \in \mathcal{E}$.
Note that we chose the expression \eqref{LRP} for the connecting probability since it has a strong scaling invariance property and is thus a convenient choice especially when studying scaling limits. That being said, it would be clear that our proof extends to the case when the connecting probability is within a multiplicative constant of $\beta|i-j|^{-2}$.

In this study, we focus on the particular case for $p_1=1$, where the connectivity, thus the existence of the infinite cluster, is trivial.
Placing a unit conductance on each edge, we may then view this LRP model as a random electric network on $\mathds{Z}$.
Our goal is to study the effective resistance (of the $\beta$-LRP), which we now define. Let $f$ be a function defined on $\mathds{Z}^2$. We call $f$ a flow on the $\beta$-LRP model if it satisfies
\begin{equation*}
f_{ij}=
\begin{cases}
-f_{ji},\quad & \langle i,j\rangle\in \mathcal{E}, \\
0,\quad &\text{otherwise}
\end{cases}
\end{equation*}
for all $i,j\in \mathds{Z}$.
So the net flow out of a point $i$ is $f_{(i)}:=\sum_{j\neq i} f_{ij}$.
Moreover, given two finite disjoint subsets $I_1,I_2\subset \mathds{Z}$, we say $f$ is a unit flow from $I_1$ to $I_2$ if it is a flow satisfying
$$
\sum_{i\in I_1}f_{(i)}=1\quad \text{and}\quad f_{(j)}=0\quad \text{for all }j\notin I_1\cup I_2.
$$
Note that the condition above implies $\sum_{i\in I_2}f_{(i)}=-1$.
The effective resistance between disjoint subsets $I_1, I_2\subset \mathds{Z}$ is then defined as
\begin{equation}\label{eff-resis}
R(I_1,I_2)=\inf_{J_1, J_2}\inf\left\{\frac{1}{2}\sum_{i\sim j}f^2_{ij}:\ f\ \text{is a unit flow from $J_1$ to } J_2\right\},
\end{equation}
where the first infimum is taken over all finite subsets $J_1 \subset I_1$ and $J_2 \subset I_2$.
In particular, when $I_1=\{i\}$ is a singleton, we denote $R(i,I_2)$ as $R(I_1,I_2)$.

Our main result establishes polynomial lower bounds on effective resistances, as detailed in the following theorem.
For any subset $A\subset \mathds{Z}^2$, we denote by $\mathcal{E}_A = \mathcal{E} \cap A$.

\begin{theorem}\label{mr}
For any $\beta>0$, there exists a constant $\delta>0$ {\rm(}depending only on $\beta${\rm)} such that the following holds.
For any $\varepsilon\in (0,1/2]$, there exists a constant $c=c(\varepsilon)>0$ {\rm(}depending only on $\beta$ and $\varepsilon${\rm)} such that for all $N\geq 1$,
\begin{equation}\label{point}
\mathds{P}\left[R(0,[-N,N]^c)\geq cN^{\delta}\right]\geq 1-\varepsilon,
\end{equation}
and
\begin{equation}\label{box}
\mathds{P}\left[R([-N,N],[-2N,2N]^c)\geq cN^{\delta} \big|\mathcal{E}_{[-N,N]\times[-2N,2N]^c}=\varnothing \right]\geq 1-\varepsilon.
\end{equation}
\end{theorem}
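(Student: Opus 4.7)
The plan is to establish (\ref{box}) by a multi-scale induction on $N$, and to deduce (\ref{point}) from (\ref{box}) by iterating the bound along dyadic scales. The main tool throughout is the \emph{series-contraction inequality} for effective resistance: if a set $B$ of vertices separates $I_1$ from $I_2$, then
\[
R(I_1, I_2) \;\geq\; R(I_1, B) + R^{\mathrm{ctr}}(B, I_2),
\]
where $R^{\mathrm{ctr}}$ is the effective resistance in the network obtained by identifying all of $B$ with a single vertex; this is consistent with the fact that $R(I_1, I_2)$ as defined in the paper already treats $I_1, I_2$ as boundary/super-nodes.

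To deduce (\ref{point}) from (\ref{box}), I would examine the dyadic scales $k$ in a window $\log_2 N - C \leq k \leq \log_2 N$. At each such scale, the probability that no long edge directly connects $[-2^k, 2^k]$ to $[-2^{k+1}, 2^{k+1}]^c$ is bounded below by a constant $p_0(\beta) \in (0,1)$, and since the scales involve disjoint edge families these events are independent. Hence with probability $1 - (1-p_0)^C$ at least one such $k$ is ``clean''; on that event, the series-contraction inequality gives $R(0, [-N, N]^c) \geq R([-2^k, 2^k], [-2^{k+1}, 2^{k+1}]^c)$, to which (\ref{box}) applies, yielding the desired polynomial bound (possibly with a slightly smaller exponent and a worse constant).

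For (\ref{box}) itself, the plan is strong induction on $N$. Given the bound at all scales $N' < N/2$, consider the annulus $\mathcal{A}_N = [-2N, 2N] \setminus (-N, N)$ conditioned on no direct edge from $[-N, N]$ to $[-2N, 2N]^c$. Subdivide $\mathcal{A}_N$ into nested dyadic sub-annuli of sizes $N/2, N/4, \ldots$ on each side, and apply series-contraction again to reduce the resistance across $\mathcal{A}_N$ to a sum of sub-annular resistances. Each sub-annulus of scale $2^j$ should then be bounded below, via the inductive hypothesis, by $c\,(2^j)^\delta$, and the geometric sum produces the target $cN^\delta$ at the top scale.

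The principal obstacle is controlling long edges of length comparable to $N$ whose endpoints lie in distant sub-annuli, since such edges short-circuit the series decomposition. Their expected number is $\Theta(\log N)$, a direct manifestation of the critical exponent $2$, and this is precisely why a na\"ive Nash--Williams argument only yields a logarithmic lower bound. My plan to handle them is to condition on the configuration of all long edges above a chosen macroscopic length threshold, use a first/second moment estimate to show that with good probability their positions are sparse enough to isolate many ``clean'' sub-annuli untouched by skipping edges, and then invoke the inductive hypothesis within each clean sub-annulus. Quantifying the multiplicative loss incurred by this conditioning at each level of the induction, and balancing it against the gain from the inductive exponent, determines the admissible range of $\delta > 0$ as a small positive constant depending on $\beta$.
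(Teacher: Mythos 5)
Your broad philosophy (multi-scale cut-set decomposition, deducing (\ref{point}) from a box-to-box estimate, and an induction on scale) is in the right spirit, but the proposal underestimates the central obstruction and the plan as described would not close. Two points of varying severity:

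\textbf{A minor, fixable point.} In the reduction of (\ref{point}) to (\ref{box}) you assert that the events ``no long edge from $[-2^k,2^k]$ to $[-2^{k+1},2^{k+1}]^c$'' across a window of $C$ consecutive scales $k$ are independent because they involve disjoint edge families. They do not: a single long edge $\langle u,v\rangle$ with $u\in[-2^k,2^k]$ and $v\in[-2^{k+3},2^{k+3}]^c$ is relevant to several consecutive scales. Moreover these events are decreasing, so FKG gives the \emph{wrong} direction for a union bound. One can still prove that a ``clean'' annulus exists near the top scale with probability $\to 1$ as $C\to\infty$, but this already requires a genuine argument — in the paper this is essentially the Firework-process estimate in Lemma \ref{Rk}/Proposition \ref{ld-xi}, which controls the joint law of cut-annuli at many scales simultaneously rather than invoking independence.

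\textbf{The genuine gap.} The series-contraction step in your induction for (\ref{box}) treats a clean sub-annulus as if the unit flow traverses it from one boundary to the other, so that the inductive hypothesis $R([-N',N'],[-2N',2N']^c)\geq c(N')^\delta$ can be inserted. This fails because even if no long edge \emph{skips over} a sub-annulus $A'$, there will in general be many long edges with one endpoint outside $A'$ and the other endpoint in the \emph{interior} of $A'$. The flow therefore enters and exits $A'$ at a whole collection of scattered interior points, and the effective resistance seen by such a multi-source/multi-sink flow has no direct comparison to the annulus-to-annulus quantity in the inductive hypothesis; it can be drastically smaller if two entry points happen to be close. ``Untouched by skipping edges'' is not the same as ``has a controlled entry profile,'' and the proposal gives no mechanism to bridge this. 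This is precisely the difficulty the paper's argument is built to handle: it reduces to the constrained resistance $\widehat{R}_n$ from $(-\infty,0]$ to $(N,\infty)$ through $(0,N]$, explicitly enumerates the flow entry points $z_1,\dots,z_{\overline{\eta}}$ in $(0,N]$ (a $\Theta(\log N)$-size random set), runs a separate multi-scale search for good cut-intervals centered at \emph{each} $z_i$, and shows that essentially every $\theta_i$ is forced through a very good pair at each stage. The resulting recursion for quantiles is not a geometric-series self-improvement but rather $a_{\alpha,n}\geq c\sum_k a_{\alpha,n-k\log k}/(k\log k)$ (Proposition \ref{an-thm}), where the $k\log k$ denominators come from the number of entry points competing at each depth of the search; balancing this sum to extract an exponential gain (Proposition \ref{quantile-lb}) is a nontrivial extra step. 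Without the entry-point bookkeeping, the induction you propose does not produce a valid recursive inequality, so (\ref{box}) — and hence (\ref{point}) — would not follow.
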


\subsection{Related work}\label{rw}

Electric networks are commonly associated with reversible Markov chains,
providing a sophisticated and efficient method for understanding properties of these chains \cite{DS84,LP06}.
An essential measurement in electric network theory is the effective resistance, which plays a crucial role in evaluating the conductivity of the electric network. Effective resistances are closely related to various aspects of random walks on networks, including recurrence/transience, heat kernels and mixing times, see e.g. \cite{AF02,LPW09,K14}.

The study of effective resistances for LRP on $\mathds{Z}^d$ has sparked great interests, as this measurement is not only  an intriguing intrinsic property of the underlying random graph but also provides effective tools for studying random walks on this model.
Specifically, consider the sequence $\{p_x\}_{x\in \mathds{Z}^d}$, where $p_x\in [0,1]$ and $p_x=p_{x'}$ for all $x,x'\in \mathds{Z}^d$ such that $|x|=|x'|$. Assume that
$$
0<\beta :=\lim_{|x|\rightarrow \infty}p_x|x|^{s}<\infty
$$
for some $s>0$.
The LRP on $\mathds{Z}^d$, introduced by \cite{S83,ZPL83}, is defined by edges $\langle x,y\rangle $ occurring independently with probability $p_{x-y}$.

We first review progress on effective resistances as well as behavior for random walks on non-critical (i.e.\ $(d,s)\neq (d,2d)$) LRP models.
In \cite{KM08} the authors employed methods to estimate volumes and effective resistances from points to boxes, obtaining the corresponding heat kernel estimates for the case where $d=1,\ s>2$ and $p_1=1$. In fact, the authors showed this in a more general random media setting.
In \cite{M08}, the author derived up-to-constant estimates for the box-to-box resistances for $d=1,\ s\in (1, 2) \cup (2, \infty)$ as well as for $d \geq 2$ and $s\in (d, d+2) \cup (d +2, \infty)$.
For random walks on infinite clusters of these LRP models,
when $s\in (d,d+2)$, it was shown in \cite{CS13} (for $s\in (d,d+1)$) and \cite{BT24} (for $d\geq 2$ and $s\in [d+1,d+2)$) that the random walk on the infinite cluster converges to an $\alpha$-stable process with $\alpha=s-d$. In the case where $d=1$ and $s>2$, it was also shown in \cite{CS13} that the random walk converges to a Brownian motion.
It is worth mentioning that \cite{BCKW21} studied the quenched invariance principle for random walks on LRP graphs with $s>2d$ for all $d\geq 2$. 
According to \cite[Problem 2.9]{BCKW21}, it seems that it remains a challenge to establish scaling limits of random walks on the LRP models with $d\geq 2$ and $s\in (d+2, 2d]$.
There are also numerous related results regarding the heat kernel, mixing time and local central limit theorem of the random walk, see e.g. \cite{BBY08,CS12,CCK22,CKW24}.


There are relatively few results for effective resistances and random walks on the critical LRP model for $d\in \{1,2\}$ (note that in \cite{M08} resistance bounds are obtained in the critical case for $d\geq 3$).
The author of \cite{B02} established recurrence for the random walk in the case of $s=2d$ for $d\in \{1,2\}$, by showing that the effective resistance from the origin to $[-N,N]^c$ diverges as $N \to \infty$.
This was then extended in \cite{Baum23} to more general LRP models with weight distribution satisfying some moment assumptions. In addition,
bounds on box-to-box resistances were provided in \cite{M08} for $d\in \{1,2\}$, including a constant lower bound for $d = 1$.
Despite these progresses, it seems a challenge to determine the divergence rate of the effective resistance.
In fact, from our conversations with colleagues, it seems there is a folklore debate on whether for $d=1$ and $s=2$ the effective resistance grows polynomially for all $\beta>0$ or has a phase transition at $\beta=1$. Our contribution in this work confirms the former scenario.


\subsection{Outline of the proof of Theorem \ref{mr}}\label{outline}

Since the main ideas for proving \eqref{point} and \eqref{box} in Theorem \ref{mr} are essentially same, we mainly  offer an overview for the proof of \eqref{point}.
To begin with, let us fix a sufficiently large $N\in \mathds{N}$ and recall that the effective resistance $R(0,[-N,N]^c)$ is defined in \eqref{eff-resis} with $I_1=\{0\}$ and $I_2=[-N,N]^c$. Our objective is to establish a polynomial lower bound for the effective resistance $R(0,[-N,N]^c)$.

When $\beta\in (0,1)$, intuitively there are relatively few long edges. This actually allows for a straightforward proof of a polynomial lower bound on $R(0,[-N,N]^c)$ by finding a sufficient number of cut-edges. Here, an edge is a cut-edge (and in general a set is a cut-set) if 0 is disconnected from $[-N, N]^c$ after its removal.
To be more specific, for $i\in [-N,N)_{\mathds{Z}}$, define
$$
\xi_i=
\begin{cases}
1,\quad &(i,i+1)\ \text{is a cut-edge},\\
0,\quad &\text{otherwise}.
\end{cases}
$$
A simple calculation reveals that $\mathds{P}[\xi_i=1]=(2N)^{-\beta}$, leading to  $$
\mathds{E}\left[R(0,[-N,N]^c)\right]\geq\mathds{E}\left[\sum_{i=-N}^{N-1}\xi_i\right] \geq c(\beta)N^{1-\beta},
$$
which then (together with a second moment computation) implies a polynomial lower bound on the resistance.
However, as $\beta$ increases, the number of long edges also increases, rendering the method of identifying cut-edges ineffective in providing a satisfactory lower bound.

In addition, as mentioned in Subsection \ref{rw}, \cite{M08} established a constant lower bound for the effective resistance associated with our $\beta$-LRP models. Indeed, the author showed the probability of the resistance being less than a certain constant is very low by choosing a specific test function in the dual variational formula of \eqref{eff-resis}.
However, obtaining a polynomial lower bound for the resistance through this method seems to be quite challenging, as a priori we have no  information about the form of the function that achieves the infimum in the dual of \eqref{eff-resis}.
A diverging lower bound was shown in \cite{B02} via constructing a collection of disjoint cut-sets, although the bound diverges rather slowly. In fact, the method of \cite{B02} can be seen as an application of multi-scale analysis, although the contribution obtained from each scale is barely large enough to obtain a diverging lower bound. The key contribution of our work, as we elaborate in what follows, is to employ a novel framework of multi-scale analysis.

The main idea of our approach for multi-scale analysis is to provide a lower bound for the effective resistance by combining the total energy generated by flows in \eqref{eff-resis} passing through ``good'' subintervals of \emph{different} scales within $[-N,N]$.
The novelty of our multi-scale analysis is largely captured by the application of the analysis fact that
\begin{equation}\label{main-eq}
a_{\alpha, n}\geq c\sum_{k=1}^{\lfloor \sqrt{n/\log n}\rfloor}\frac{a_{\alpha,n-k\log k}}{k\log k}\quad
\Longrightarrow\quad  a_{\alpha,n}\geq \widetilde{c}(\alpha)\e^{\delta(\alpha)n}
\end{equation}
for all $\alpha\in (0,1/2]$ and some constants $c>0$ and $\widetilde{c}(\alpha),\ \delta(\alpha)>0$ (depending only on $\beta$ and $\alpha$). Here  $n=\log_2 N$, and $a_{\alpha, k}$ is the $(1-\alpha)$-quantile of the ``effective resistance'' $\widehat{R}((-\infty,0],(2^k,+\infty))$ defined in \eqref{def-Rn} below.
The key challenge of this work is to prove the recursive formula on the left-hand side of \eqref{main-eq}, and one difficulty is that we have to simultaneously control all unit flows.
To this end, we partition the region near points where the flow enters the interval $(0, N]$ into intervals of length $2^i$ for  $i\geq 1$.
Then we search for ``cut-intervals'' at each step with $k\log k$ layers.
Here a cut-interval essentially plays the role of a cut-set, and roughly speaking it means that any unit flow from $(-\infty, 0]$ to $(N, \infty)$ must pass through the cut-interval (i.e., the flow must enter and then exit from the cut-interval). In our rigorous proof, the notion of cut-interval will be replaced by Definition \ref{def-g}.
In addition, we will show that a significant fraction of cut-intervals will be very good, in the sense that when the flow  passes through these intervals it generates a significant amount of energy (resulting in a significant contribution to effective resistances).
By using this and suitably selecting associated parameters, we can establish the left-hand side of \eqref{main-eq}, as incorporated in Proposition \ref{an-thm}.

Now, let us provide a slightly more detailed overview of the proof of \eqref{point} in Theorem \ref{mr}.
By a simple first moment computation, it is clear that with probability 1 there are only finitely many edges joining $[-N,N]$ and $[-N,N]^c$.
This implies that unit flows from 0 to $[-N,N]^c$  are well-defined, since they can be viewed as unit flows from $0$ to the finite set
$$
I=\left\{i\in [-N,N]^c:\ \exists j\in [-N,N]\ \text{such that }\langle i,j\rangle\in \mathcal{E}_{[-N,N]\times [-N,N]^c}\right\}.
$$
Now let $f$ be a unit flow from 0 to $[-N,N]^c$ attaining the infimum in \eqref{eff-resis}.
Then, we can see that the flow $f$ emanates from 0 (with an outflow of $f_{(0)}=\sum_{i:0\sim i}f_{0i}=1$) and ultimately flows into $[-N,N]^c$ (with an inflow of $\sum_{j\in [-N,N]^c}\sum_{i\in [-N,N]: i\sim j}f_{ij}=1$).
As a result, the flow $f$ must pass through intervals $[-N,0)$ or $(0,N]$ (unless 0 is directly connected to $[-N, N]^c$, which is unlikely).
So let us define $\theta_1$ as the portion of flow $f$ that passes through the interval $(0,N]$ and then flow into $[-N,N]^c$. Similarly, we define $\theta_2$ by replacing $(0,N]$ with $[-N,0)$ in the definition of $\theta_1$. We also let
\begin{equation}\label{flow+-}
|\theta_1|=\sum_{i\in (0,N]}\sum_{j\in [-N,N]^c}f_{ij}\quad \text{and}
\quad |\theta_2|=\sum_{i\in [-N,0)}\sum_{j\in [-N,N]^c}f_{ij}
\end{equation}
represent amounts of flows exiting from intervals $(0,N]$ and $[-N,0)$, respectively. It is clear that $\max\{|\theta_1|,|\theta_2|\}\geq 1/2$. Define
\begin{equation}\label{def-Rtilde}
\begin{aligned}
\widehat{R}([-N,0],[-N,N]^c)
&=\inf\left\{\frac{1}{2}\sum_{u\sim v }f^2_{uv}:\ f\ \text{is a unit flow from $[-N,0]$ to }[-N,N]^c,\right.\\
&\quad \quad \quad\quad \quad \quad \quad \quad \quad  \text{and }f_{uv}=0\ \text{for all }\langle u,v\rangle\in \mathcal{E}_{[-N,0]\times [-N,N]^c} \Bigg\}
\end{aligned}
\end{equation}
as the effective resistance generated by unit flows from $[-N,0]$ to $[-N,N]^c$, passing through the interval $(0,N]$. Similarly, denote $\widehat{R}([0,N],[-N,N]^c)$ as the effective resistance generated by unit flows from $[0,N]$ to $[-N,N]^c$, passing through the interval $[-N,0)$.
Then we can deduce that
\begin{equation}\label{Rn-Rhat}
R(0,[-N,N]^c)\geq \max\left\{|\theta_1|^2 \widehat{R}([-N,0],[-N,N]^c), |\theta_2|^2\widehat{R}([0,N],[-N,N]^c)\right\}.
\end{equation}
\begin{figure}[htbp]
\centering
\includegraphics[height=1.5cm,width=14cm]{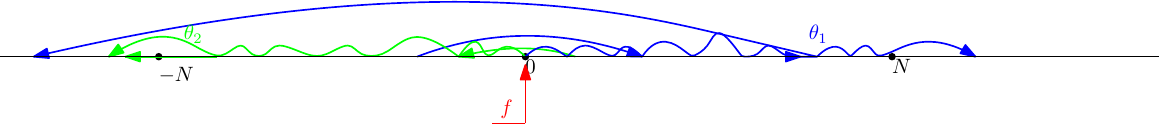}
\caption{The illustration for \eqref{Rn-Rhat}. The red arrow represents the unit flow $f$ entering at 0. The blue curves represent the flows exiting from $(0,N]$, while the green curves represent the flows exiting from $[-N,0)$.}
\label{flow1}
\end{figure}
(See Figure \ref{flow1} for an illustration). Note that $\widehat{R}([-N,0],[-N,N]^c)$ and $\widehat{R}([0,N],[-N,N]^c)$ have the same distribution. Consequently, it suffices to prove that $\widehat{R}([-N,0],[-N,N]^c)$ has a polynomial lower bound with high probability.

Furthermore, for the sake of convenience in employing an iterative approach for estimating resistances, we lower-bound  $\widehat{R}([-N,0],[-N,N]^c)$ via the resistance generated by unit flows  from $(-\infty,0]$ to $(N,+\infty)$ passing through the interval $(0,N]$, which is defined as
\begin{equation}\label{def-Rn}
\begin{aligned}
\widehat{R}((-\infty,0],(N,+\infty))
&=\inf\left\{\frac{1}{2}\sum_{u\sim v }f^2_{uv}:\ f\ \text{is a unit flow from $(-\infty,0]$ to }(N,+\infty)\right.\\
&\quad \quad \quad\quad \quad \quad \quad \quad \quad  \text{and }f_{uv}=0\ \text{for all }\langle u,v\rangle\in \mathcal{E}_{(-\infty,0]\times (N,+\infty)} \Bigg\}.
\end{aligned}
\end{equation}
(See Subsection \ref{propertyRhat} for more details). In conclusion, our primary focus is to establish a polynomial lower bound for the resistance $\widehat{R}((-\infty,0],(N,+\infty))$ as incorporated in Theorem \ref{Rn}.

To prove Theorem \ref{Rn}, we first introduce the concept of good pairs of intervals.
Roughly speaking, for any $z\in \mathds{Z}$ and $i\in \mathds{N}$, we say the pair of intervals $([z-2^{i+1},z-2^i),(z+2^i,z+2^{i+1}])$ is good if any path originating from $z$ to $[z-2^{i+1},z+2^{i+1}]^c$
must pass through either $[z-2^{i+1},z-2^i)$ or $(z+2^i,z+2^{i+1}]$ (see Definition \ref{def-g}).
Essentially, we may consider good pairs of intervals as some kind of ``cut-intervals''.
Hence, when the flow exits from $z$, it must pass through at least one of these two intervals, generating a certain amount of energy.
We will employ the Firework process theory (see e.g.\ \cite{GGJR14}) by viewing coverage of long edges over intervals as the propagation in the Firework process, to obtain some estimates for the distribution of the number of intervals that can be covered at a time (see Lemma \ref{Rk}).
The definition of an interval covered by long edges will be provided in \eqref{cover} below.
From this and a general Chernoff-Hoeffding bound, we can show that
for a fixed $z$, with high probability there exist a sufficient number of good pairs of intervals at various scales around it (see Proposition \ref{ld-xi}).

We next consider the area surrounding points where the unit flow (from $(-\infty,0]$ to $(N,+\infty))$ enters $(0,N]$. We segment such area into intervals of length $2^i$ for $i\geq 1$, which we will refer to as the $i$-th layer intervals.
Then we search for good pairs of intervals at each step with $k\log k$ layers and show that with high probability we can find enough good pairs of intervals in many layers (see Figure \ref{explore}).
Note that the $k\log k$ here is in correspondence with $n-k \log k$ in the subscript in \eqref{main-eq} and is also responsible for the factor of $k \log k$ in the denominator there.
Therefore, when the flow passes through those good intervals, it will provide sufficiently large energies (i.e.\ effective resistances).
This implies an effective lower bound for the resistance (see Lemma \ref{induct-thm}).
Then we can establish a recursive formula for quantiles of the resistance $\widehat{R}(\cdot,\cdot)$ (see Proposition \ref{an-thm}).
From this we can  obtain a polynomial lower bound in $N$ for the resistance $\widehat{R}((-\infty,0],(N,+\infty))$ as in  Theorem \ref{Rn}.
We re-iterate that one difficulty is that we have to simultaneously control all unit flows. To achieve this, we show that good pairs of intervals found above through multi-scale analysis are essentially cut-sets, so all unit flows must pass through these intervals and then generate enough energy. This is incorporated in Section \ref{sect-Rhat}.

\bigskip


{\bf Notational conventions.}
We denote $\mathds{N}=\{1,2,3.\cdots \}$. For $a<b$, we define $[a,b]_{\mathds{Z}}=[a,b]\cap \mathds{Z}$. When we refer to an interval $I$, it always implies that $I\cap \mathds{Z}$. For any two disjoint sets $I_1,I_2\subset \mathds{Z}$, let $f$ be a flow from $I_1$ to $I_2$, we write $|f|$ for its amount, that is,
\begin{equation}\label{size}
|f|=\sum_{i\in I_1}\sum_{j\in I_1^c}f_{ij}.
\end{equation}
For example, if $f$ is a unit flow, then $|f|=1$. In addition, for any two sets $I_1,I_2\subset \mathds{Z}$, we recall $\mathcal{E}_{I_1\times I_2}:=\mathcal{E}\cap(I_1\times I_2)$.

Throughout the paper, we use $c_1, c_2,c,c', \cdots$ to denote positive constants, whose values are the same within each section but may vary from section to section.

\section{Good intervals and associated estimates}\label{sect-goodi}

 For $z\in \mathds{Z}$ and $i\in \mathds{N}$, denote by $I_i^+(z)$ and $I_i^-(z)$ the intervals $(z+2^i,z+2^{i+1}]$ and $[z-2^{i+1},z-2^i)$, respectively.
\begin{definition}\label{def-g}
We say the pair of intervals $(I_i^-(z),I_i^+(z))$ is \textit{good} if
\begin{itemize}
\item[(1)]$[z-2^{i+1},z+2^i]$ and $(z+2^{i+1},+\infty)$ are not directly connected by any long edge;
\medskip

\item[(2)] $(-\infty,z-2^{i+1})$ and $[z-2^i, z+2^{i+1}]$ are not directly connected by any long edge.
\end{itemize}
\end{definition}

\begin{figure}[htbp]
\centering
\includegraphics[height=1.5cm,width=16cm]{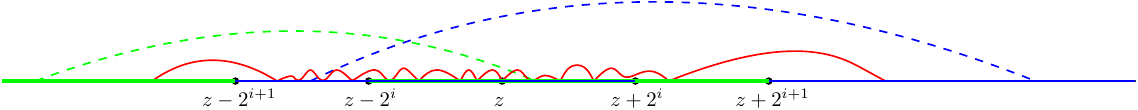}
\caption{The illustration for the definition of the good pair of intervals. The blue dashed curve in the graph represents the absence of long edges directly connecting intervals $[z-2^{i+1},z+2^i]$ and $[z+2^{i+1},+\infty)$ (blue lines), while the green dashed curve represents the absence of long edges directly connecting intervals $(-\infty,z-2^{i+1}]$ and $[z-2^i, z+2^{i+1}]$ (green lines). Therefore, for any path starting from $z$ to $[z-2^{i+1},z+2^{i+1}]^c$ (the red curve), it must pass through either $I_i^-(z)$ or $I_i^+(z)$.}
\label{goodi}
\end{figure}

In the following, we aim to provide a large deviation estimate for the number of good pairs of intervals. To do this, we only need to consider the case where $z=0$ due to the translation invariance of our model. For simplicity, we denote $I^\pm_i(0)$ as $I^\pm_i$ in this case. We now define a sequence of Bernoulli random variables $\{\xi_i\}_{i\geq1}$ as
\begin{equation*}
\xi_i=\begin{cases}
1,\quad & (I_i^-,I_i^+)\ \text{is good,}\\
0,\quad & \text{otherwise}.
\end{cases}
\end{equation*}

\begin{proposition}\label{ld-xi}
For any $\beta>0$, there exists a constant $\kappa=\kappa(\beta)\in (0,1)$ {\rm(}depending only on $\beta${\rm)} such that for all $n\geq 1$, we have
$$
\mathds{P}\left[\sum_{i=1}^{n}\xi_i\leq (1-\kappa)n/2\right]\leq \exp\left\{-(1-\kappa)^2n/2\right\}.
$$
\end{proposition}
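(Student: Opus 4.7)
The plan is to first compute the marginal probability $p := \mathds{P}[\xi_i = 1]$ explicitly, and then address the dependence among the $\xi_i$'s in order to apply a Chernoff--Hoeffding inequality. For the marginal, I observe that the two conditions in Definition \ref{def-g} concern disjoint sets of edges---condition (1) concerns edges crossing $2^{i+1}$ with left endpoint in $[-2^{i+1}, 2^i]$, while condition (2) concerns edges crossing $-2^{i+1}$ with right endpoint in $[-2^i, 2^{i+1}]$---so the two are independent. A direct integration using \eqref{LRP} gives
\[
\mathds{P}[\text{condition (1)}] \;=\; \exp\Big\{-\beta \int_{-2^{i+1}}^{2^i}\!\int_{2^{i+1}}^{\infty} (v-u)^{-2}\,dv\,du\Big\} \;=\; \exp(-\beta \log 4) \;=\; 4^{-\beta},
\]
and by symmetry the same value for condition (2), so $p = 16^{-\beta}$, independent of $i$. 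If one takes $\kappa = 1 - p \in (0, 1)$, the target bound $\exp\{-(1-\kappa)^2 n/2\}$ matches the classical Hoeffding exponent for a sum of $n$ i.i.d.\ $\mathrm{Bernoulli}(p)$ indicators, which tells me the correct goal is a (stochastic) dominance of $\sum_i \xi_i$ from below by such a Binomial.

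The principal challenge is that the $\xi_i$'s are \emph{not} independent, so Hoeffding does not apply directly. I plan to exploit the scale structure of the LRP: each $\xi_i$ is determined by edges of length at least $2^i$, so the main mass comes from edges of length comparable to $2^i$, and the correlation between $\xi_i$ and $\xi_j$ decays geometrically in $|i-j|$ (because the overlap of their bad-edge sets has integral of size $O(2^{-|i-j|})$). Concretely, I expose edges in order of decreasing length via the filtration $\mathcal{F}_k = \sigma(\text{edges of length} \geq 2^k)$ and consider the associated reverse-time Doob filtration. Conditional on $\mathcal{F}_{i+1}$, either some already-revealed long edge already violates one of the conditions in Definition \ref{def-g} (in which case $\xi_i = 0$ is forced, and these ``spoiled'' scales must be counted) or no such edge is present, in which case $\mathds{P}[\xi_i = 1 \mid \mathcal{F}_{i+1}]$ equals an explicit constant $q = q(\beta) > 0$ given by the probability that no edge of length in $[2^i, 2^{i+1})$ lies in the relevant window, computed just as in the marginal.

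From here one applies a Chernoff--Hoeffding bound to the (conditionally) independent Bernoulli indicators $(\xi_i)$ on the unspoiled scales, obtaining an exponential estimate with $\kappa$ depending only on $\beta$ (chosen so that the contribution of the spoiled scales does not ruin the bound). The main obstacle is to control the spoiled scales cleanly: a single edge of length $\ell$ can simultaneously invalidate $\xi_i$'s conditions for $O(\log \ell)$ consecutive scales, so a naive union bound would lose a logarithmic factor. The decisive input is the firework-coverage estimate of Lemma \ref{Rk}: integrating the $1/\ell^2$ decay of the edge probability against the logarithmic-in-$\ell$ spoiled-range shows that the expected total number of spoiled scales remains $O(1)$ per realised long edge, so the deficit can be absorbed into the constant $\kappa$ without degrading the Hoeffding exponent.
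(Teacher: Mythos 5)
Your observation that the two conditions in Definition~\ref{def-g} involve disjoint edge sets (hence are independent), and the resulting computation of the marginal $p = \mathds{P}[\xi_i = 1]$, are both correct. However, there is a genuine gap in the rest of the argument, and the approach departs from the paper's in a way that leaves the crucial concentration step unresolved.

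First, the inference that ``the correct goal is stochastic dominance of $\sum_i\xi_i$ from below by a Binomial$(n,p)$'' is unwarranted. The proposition asserts only that \emph{some} $\kappa\in(0,1)$ works, not $\kappa=1-p$. In fact, the events $\{\xi_i=0\}$ are positively correlated (a single long edge can spoil an interval of $O(\log\ell)$ consecutive scales, as you note), and positive correlation is exactly the obstruction to dominance by an independent Binomial: with enough positive correlation, $\mathds{P}[\sum\xi_i\leq pn/2]$ can exceed the Hoeffding bound. So dominance by Binomial$(n,p)$ with $\kappa=1-p$ is not a safe target; the value of $\kappa$ achieved must reflect the dependence, and in the paper it is \emph{not} $1-p$, but the constant from Lemma~\ref{induct-w-lem}, which satisfies $\kappa\geq 1-p$.

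Second, the filtration argument does not reach the deviation bound. The issues are concrete: (a) the set of ``unspoiled'' scales is a random subset, so ``applying Chernoff--Hoeffding to the indicators on the unspoiled scales'' is not a well-defined step without an argument that decouples the randomness of that set from the Bernoulli trials on it; (b) the natural martingale $M_i=\mathds{E}[\sum_j\xi_j\mid\mathcal{F}_i]$ has increments that are \emph{not} uniformly bounded, precisely because a newly revealed edge of length $\ell$ can spoil $\Theta(\log\ell)$ scales, so Azuma/Hoeffding for martingales does not apply directly; and (c) the claim that ``the expected total number of spoiled scales remains $O(1)$ per realised long edge'' is both imprecise and not the quantity that governs concentration --- what matters is not the expected number of spoiled scales but the tail of how many can be spoiled simultaneously. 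Lemma~\ref{Rk} does not by itself supply what is needed here.

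The paper takes a cleaner and genuinely different route. It uses the Firework process (via Lemmas~\ref{Rk} and~\ref{Mr-upper}) to establish the uniform geometric decay $\mathds{P}[\xi_{i_1}=\cdots=\xi_{i_r}=0]\leq\kappa^r$ for \emph{every} index set $\{i_1,\ldots,i_r\}$ (Lemma~\ref{induct-w-lem}). This ``all-zeros'' bound is exactly the hypothesis of the dependent Chernoff--Hoeffding inequality of \cite{RV10} (equivalently \cite[Theorem~3.4]{AA97}), which then yields the exponential deviation bound directly, with the \emph{same} $\kappa$ appearing in both the threshold and the rate. This avoids exposing edges sequentially, avoids the random unspoiled set, and avoids the unbounded martingale increments. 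The ingredient your sketch is missing is precisely this all-zeros bound, which is what the Firework-process machinery is actually used to prove; your proposal invokes the Firework estimate for a weaker, expectation-level statement that does not suffice for the exponential tail.
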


To prove Proposition \ref{ld-xi}, we need some preparations.

\begin{lemma}\label{induct-w-lem}
There is a constant  $\kappa \in (0,1)$ {\rm(}depending only on $\beta${\rm)} such that the following holds. For any $r\in \mathds{N}$ and any  $S=\{i_1,i_2,\cdots,i_r\}\subset \mathds{N}$ in the ascending order, let
$$
w_{r}=\mathds{P}[\xi_{i_1}=0,\xi_{i_2}=0,\cdots,\xi_{i_r}=0].
$$
Then $w_r\leq \kappa^r$.
\end{lemma}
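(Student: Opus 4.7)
My plan is to prove Lemma~\ref{induct-w-lem} by induction on $r$, producing a constant $\kappa\in(0,1)$ depending only on $\beta$.

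For the base case $r=1$, I will show $\mathds{P}[\xi_i=1]\geq 25^{-\beta}$. Note that $\xi_i=1$ is the conjunction of conditions (1) and (2) in Definition~\ref{def-g}, and these depend on disjoint families of long edges: an edge witnessing the failure of~(1) must have its larger endpoint above $2^{i+1}$, while a witness for failure of~(2) must have its smaller endpoint below $-2^{i+1}$, and one readily checks these are mutually exclusive. Hence the two events are independent. A direct computation using \eqref{LRP} gives
\[
\mathds{P}[(1)]=\exp\!\left(-\beta\int_{-2^{i+1}}^{2^i+1}\!\int_{2^{i+1}+1}^{\infty}\!\frac{dt\,ds}{(t-s)^2}\right)=\exp\!\left(-\beta\log\frac{2^{i+2}+1}{2^i}\right)\geq 5^{-\beta},
\]
and symmetrically $\mathds{P}[(2)]\geq 5^{-\beta}$. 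Therefore $\mathds{P}[\xi_i=0]\leq 1-25^{-\beta}=:\kappa$, establishing the base case.

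For the inductive step, the main difficulty is that the events $\{\xi_{i_k}=0\}$ are \emph{positively} correlated (by FKG they are increasing in the edge configuration); in particular a single long edge reaching from near the origin out past $2^{i_r+1}$ causes every scale to fail simultaneously, so a direct product bound is unavailable. The plan is to extend the disjoint-edge-family observation from the base case to the joint setting: writing each failure as $\xi_{i_k}=0 = A_{i_k}^+\cup A_{i_k}^-$ (failures of (1) and (2) respectively), the same endpoint inspection shows that the global families $\bigcup_k E_{i_k}^+$ and $\bigcup_{k'} E_{i_{k'}}^-$ are disjoint, since any edge in some $E_{i_k}^-$ satisfies $u<-2^{i_k+1}$ while any edge in some $E_{i_{k'}}^+$ satisfies $u\geq -2^{i_{k'}+1}$, and these cannot hold simultaneously. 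Consequently the families $\{A_{i_k}^+\}_k$ and $\{A_{i_k}^-\}_k$ are independent, and one can treat the two sides separately, reducing the task to a uniform bound of the form $\mathds{P}[\bigcap_{k\in T} A_{i_k}^+]\leq \tilde\kappa^{|T|}$ (and the symmetric statement) for every $T\subseteq\{1,\ldots,r\}$.

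The one-sided bound is the heart of the argument and where I expect the main obstacle to lie. My plan is to process the scales in $T$ in increasing order and use the filtration $\mathcal{G}_t:=\sigma(\{\langle u,v\rangle\in\mathcal{E}:v\leq 2^{i_{j_t}+1}\})$ where $j_1<\cdots<j_s$ enumerates $T$. The event $A_{i_{j_t}}^+$ is independent of $\mathcal{G}_t$ because it requires an edge with $v>2^{i_{j_t}+1}$, and the same scale-invariant integration as in the base case yields the conditional absence probability $\mathds{P}[(A_{i_{j_t}}^+)^c\mid\mathcal{G}_t]\geq 5^{-\beta}$. The subtlety is that the earlier events $A_{i_{j_l}}^+$ for $l<t$ are not fully $\mathcal{G}_t$-measurable — a witness for $A_{i_{j_l}}^+$ may have its right endpoint beyond $2^{i_{j_t}+1}$ — so I will need to split each $A_{i_{j_l}}^+$ into a $\mathcal{G}_t$-revealed part (restricting the witness to $v\leq 2^{i_{j_t}+1}$) and a tail part, and carefully absorb the tail contributions using the polynomial decay of the edge-probability formula \eqref{LRP}. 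I anticipate this combinatorial bookkeeping, rather than any new analytic estimate, to be the chief source of difficulty; the essential analytic input remains the scale-invariant integral from the base case.
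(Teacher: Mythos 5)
Your base case is essentially sound (the integral computation and the disjointness of witnesses are both correct at scale $i$, and the disjointness/independence of conditions~(1) and~(2) of Definition~\ref{def-g} at a single fixed scale does hold). However, the extension of the disjointness claim to the joint setting has a gap as stated: when $i_k\neq i_{k'}$, the condition on the smaller endpoints $u$ alone does \emph{not} preclude an edge from witnessing both $A^-_{i_k}$ and $A^+_{i_{k'}}$ (e.g.\ $i_{k'}>i_k$ permits $-2^{i_{k'}+1}\leq u<-2^{i_k+1}$); you also need to invoke the larger endpoints $v$, which then give the contradiction. This is fixable, but you should state it. More importantly, your ``treat the two sides separately'' reduction is not spelled out and is not a free lunch: a naive union over subsets $T$ gives a factor $2^r$ and hence only $(2\tilde\kappa)^r$, which fails whenever $\tilde\kappa\geq 1/2$ (and $\tilde\kappa\geq\mathds{P}[A^+_{i_k}]\approx 1-4^{-\beta}$, so this is most $\beta$). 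A valid reduction does exist — writing $X_k=\mathds{1}_{A^+_{i_k}}$, $Z=|\{k:X_k=0\}|$, one gets $\mathds{P}[\bigcap_k(A^+_{i_k}\cup A^-_{i_k})]\leq\mathds{E}[\tilde\kappa^Z]$, and expanding $\mathds{E}[\prod_k(\tilde\kappa+(1-\tilde\kappa)X_k)]$ and applying the one-sided bound term-by-term yields $(1-(1-\tilde\kappa)^2)^r$ — but you never produce it, and it is not the obvious route.

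The genuine gap is the one-sided bound $\mathds{P}[\bigcap_{k\in T}A^+_{i_k}]\leq\tilde\kappa^{|T|}$, which you acknowledge is where ``the chief source of difficulty'' lies. Your filtration idea does not close this: the events $\{A^+_{i_{j_l}}\}_{l<t}$ are not $\mathcal{G}_t$-measurable and the ``tail'' parts you describe are exactly the witnesses that simultaneously force $A^+_{i_{j_t}}$ to hold, so there is no uniform positive conditional probability for $(A^+_{i_{j_t}})^c$ along your filtration. The cleanest way to see what is really needed: partition the far endpoints of witnesses into the annuli $(2^{i_{j_t}+1},2^{i_{j_{t+1}}+1}]$, let $\ell(t')$ be the innermost box reached by an edge whose far endpoint lies in annulus $t'$ (these are \emph{independent} across $t'$), and observe that $\bigcap_{t}A^+_{i_{j_t}}$ is precisely the event that the random intervals $[\ell(t'),t']$ cover $[1,|T|]$. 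Bounding that coverage probability by $\tilde\kappa^{|T|}$ is exactly a Firework-process estimate — which is what the paper proves directly for the two-sided problem by introducing $L_k$, the spreading sets $A_m$ and $M_r$, and invoking \cite[Proposition~1]{GGJR14} in Lemmas~\ref{Rk}--\ref{Mr-upper}. In short, your decomposition into $\pm$ sides is correct but does not bypass the core difficulty: after the (nontrivial, and omitted) tensorization reduction, you still need the Firework-type coverage bound that the paper establishes, and your proposal leaves precisely that step unproven.
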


To prove Lemma \ref{induct-w-lem}, we will review the Firework process (see e.g.\ \cite{GGJR14}) and view coverage of  long edges over intervals as the propagation in the Firework process. For that, we fix $r\in \mathds{N}$ and $S=\{i_1,i_2,\cdots,i_r\}\subset \mathds{N}$ being a set in the ascending order.

For $k\in[2, r+1]_{\mathds{Z}}$,  let $L_k$ represent the minimum number of $s\leq k-1$ such that there exists at least one long edge within
$$
\mathcal{E}_{[-2^{i_k+1}, -2^{i_{k-1}+1})\times [-2^{i_s},2^{i_s+1}]}\cup\mathcal{E}_{(2^{i_{k-1}+1},2^{i_k+1}]\times [-2^{i_s+1},2^{i_s}]},
$$
where $i_{r+1}:=+\infty$ (see Figure \ref{Rk-figure} for an illustration). It is worth emphasizing that $L_1,\cdots, L_{k+1}$ are independent from the independence of edges in our model. We say the pair of intervals $(I_{i_s}^-,I_{i_s}^+)$ is covered by long edges if there exists at least one long edge within
\begin{equation}\label{cover}
\mathcal{E}_{(-\infty,-2^{i_s+1})\times [-2^{i_s},2^{i_s+1}]}\cup \mathcal{E}_{(2^{i_s+1},+\infty)\times [-2^{i_s+1},2^{i_s}]}.
\end{equation}

\begin{figure}[htbp]
\centering
\includegraphics[height=3.8cm,width=16cm]{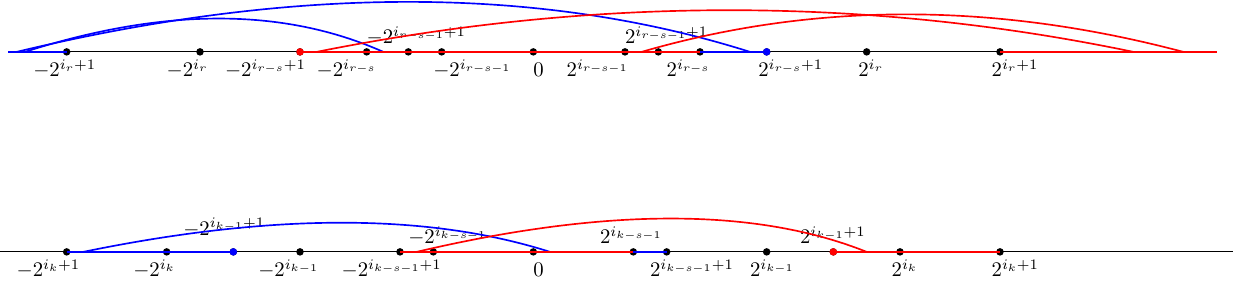}
\caption{The illustrations for the definitions of $L_{r+1}$ (at the top) and $L_k,\ k\in [2,r]_{\mathds{Z}}$ (at the bottom).
For $k\in [2,r+1]_{\mathds{Z}}$ and $s\in [0,k-1]_{\mathds{Z}}$, the event $L_k>s$ is equivalent to the event that there exists at least one long edge in $\mathcal{E}_{[-2^{i_k+1},-2^{i_{k-1}+1})\times [-2^{i_{k-s-1}},2^{i_{k-s-1}+1}]}$ (blue curves) or $\mathcal{E}_{(2^{i_{k-1}+1},2^{i_k+1}]\times [-2^{i_{k-s-1}+1},2^{i_{k-s-1}}]}$ (red curves).
}
\label{Rk-figure}
\end{figure}

For the distribution of $L_k$, we have the following property.

\begin{lemma}\label{Rk}
For $r\in \mathds{N}$ and $S=\{i_1,\cdots, i_r\}\subset \mathds{N}$ in the ascending order, we have
$$
\mathds{P}[L_k> s]\leq 1-\left(1-\frac{3}{2^{s+1}+2}\right)^{2\beta}
$$
for all $k\in [2,r+1]_{\mathds{Z}}$ and $s\in [0,k-1]_{\mathds{Z}}$.
\end{lemma}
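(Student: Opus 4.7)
The plan is to reduce $\{L_k>s\}$ to the event that at least one edge appears in an explicit collection, and then bound that probability by a direct double-integral estimate on the LRP connection probabilities. First, from the definition of $L_k$ (as made explicit by the figure caption), the event $\{L_k>s\}$ is exactly $\{E_{k,s}\neq\varnothing\}$ with
\begin{equation*}
E_{k,s}:=\mathcal{E}_{[-2^{i_k+1},-2^{i_{k-1}+1})\times[-2^{i_{k-s-1}},2^{i_{k-s-1}+1}]}\cup
\mathcal{E}_{(2^{i_{k-1}+1},2^{i_k+1}]\times[-2^{i_{k-s-1}+1},2^{i_{k-s-1}}]}.
\end{equation*}
Every edge in $E_{k,s}$ is a long edge (the two sides are separated by more than one), so by independence and \eqref{LRP} the absence probabilities multiply and one obtains $\mathds{P}[E_{k,s}=\varnothing]=\exp(-\beta J_{k,s})$, where $J_{k,s}$ is the double integral of $(x-y)^{-2}$ over the union of the associated continuous rectangles. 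The reflection $(x,y)\mapsto(-x,-y)$ bijects one rectangle onto the other while preserving the integrand, so $J_{k,s}=2I_{k,s}$, where $I_{k,s}$ is the integral over just the left-hand rectangle.

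Next I compute $I_{k,s}$ in closed form via the elementary antiderivative $-(x-y)^{-1}$ of $(x-y)^{-2}$. Writing $A:=2^{i_k+1}$, $B:=2^{i_{k-1}+1}$, $C:=2^{i_{k-s-1}}$,
\begin{equation*}
I_{k,s}=\ln\frac{(2C+B)(A-C)}{(B-C)(2C+A)}.
\end{equation*}
Setting $t:=B/C$ and $q:=A/C$ this rearranges to $\ln\frac{(t+2)(q-1)}{(t-1)(q+2)}$. Since $q\mapsto(q-1)/(q+2)$ is strictly increasing with limit $1$, we have $I_{k,s}\le\ln\frac{t+2}{t-1}$; and since the $i_j$ are distinct positive integers in ascending order, $i_{k-1}-i_{k-s-1}\ge s$, and hence $t\ge 2^{s+1}$. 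Because $t\mapsto\frac{t+2}{t-1}=1+\frac{3}{t-1}$ is decreasing, I conclude $I_{k,s}\le\ln\frac{2^{s+1}+2}{2^{s+1}-1}$.

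Putting these pieces together,
\begin{equation*}
\mathds{P}[L_k>s]=1-\exp(-2\beta I_{k,s})\le 1-\Bigl(\tfrac{2^{s+1}-1}{2^{s+1}+2}\Bigr)^{2\beta}=1-\Bigl(1-\tfrac{3}{2^{s+1}+2}\Bigr)^{2\beta},
\end{equation*}
which is the claim. The boundary cases ($k=r+1$, using $i_{r+1}=+\infty$ so that $q=\infty$ and the bound $I_{k,s}\le\ln\frac{t+2}{t-1}$ is an equality; and $s=k-1$, where adopting the natural convention $i_0\le 0$ still gives $i_{k-1}-i_0\ge k-1=s$) fit into the same framework. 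The main technical annoyance I expect is the discrete-to-continuous passage in the exponent: one must verify that summing the product-form exponents from \eqref{LRP} over integer pairs $(u,v)$ in the two rectangles produces exactly the continuous double integral written above, so that the elementary antiderivative delivers precisely the coefficient $3/(2^{s+1}+2)$ in the stated bound.
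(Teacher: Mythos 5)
Your proposal is correct and follows essentially the same route as the paper's proof: express $\mathds{P}[L_k>s]$ exactly as $1-\exp(-2\beta I)$ via independence and reflection symmetry, scale the double integral, and apply the key input $i_{k-1}-i_{k-s-1}\ge s$. The only difference is one of explicitness: where the paper's proof jumps directly from the scaled integral to the bound $1-(1-3/(2^{i_{k-1}-i_{k-s-1}+1}+2))^{2\beta}$ without justification, you evaluate the integral in closed form as $\ln\frac{(t+2)(q-1)}{(t-1)(q+2)}$ and then use monotonicity in $q$ and in $t$ — which is precisely the omitted step, so your write-up is actually more complete than the paper's own. (You also correctly flag, and correctly dismiss as routine, the discrete-to-continuous matching of the product of exponents in \eqref{LRP} to the displayed double integral, a minor bookkeeping point the paper glosses over as well.)
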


\begin{proof}
 It follows from the definition of $L_k$ that for each $k\in [2,r+1]_{\mathds{Z}}$ and each $s\in [0,k-1]_{\mathds{Z}}$,
\begin{equation*}
\begin{aligned}
\mathds{P}[L_k>s]&=1-\exp\left\{-2\beta\int_{-2^{i_k+1}}^{-2^{i_{k-1}+1}}\int_{-2^{i_{k-s-1}}}^{2^{i_{k-s-1}+1}}\frac{1}{(u-v)^2}\d u\d v\right\}\\
&=1-\exp\left\{-2\beta\int_{-2^{i_k-i_{k-s-1}+1}}^{-2^{i_{k-1}-i_{k-s-1}+1}}\int_{-1}^{2}\frac{1}{(u-v)^2}\d u\d v\right\}\\
&\leq 1-\left(1-\frac{3}{2^{i_{k-1}-i_{k-s-1}+1}+2}\right)^{2\beta}\leq 1-\left(1-\frac{3}{2^{s+1}+2}\right)^{2\beta},
\end{aligned}
\end{equation*}
where the last inequality is from $i_{k-1}-i_{k-s-1}\ge s$.
\end{proof}

We let $A_0=\{i_r+1\}$ and for $m\geq 1$, we inductively define
\begin{equation}\label{An}
A_m=\left\{i_s\in S:\ \text{there exists }i_k\in A_{m-1}\ \text{such that }s\in \{k-L_k,\cdots,k-1\}\right\}\setminus A_{m-1}.
\end{equation}
The above definition in fact corresponds to a ``spreading'' procedure of the edge set for the LRP model, where in the $m$-th step we explore long edges in
\begin{equation}\label{le-Am}
\cup_{i_k\in A_{m-1}}\left(\mathcal{E}_{(-2^{i_k+1}, -2^{i_{k-1}+1}]\times [-2^{i_s},2^{i_s+1}]}\cup\mathcal{E}_{[2^{i_{k-1}+1},2^{i_k+1})\times [-2^{i_s+1},2^{i_s}]}\right)
\end{equation}
to determine if the element $i_s$ is in $A_m$. That is, if the edge set in \eqref{le-Am} is non-empty, then  $i_s\in A_m$. We see that $i_s\in A_m$ (namely, the pair of intervals $(I_{i_s}^-,I_{i_s}^-)$ is newly covered at the $m$-th step) if it was not covered by the edge set
\begin{equation*}
\cup_{i_k\in A_{m-2}}\left(\mathcal{E}_{(-2^{i_k+1}, -2^{i_{k-1}+1}]\times [-2^{i_s},2^{i_s+1}]}\cup\mathcal{E}_{[2^{i_{k-1}+1},2^{i_k+1})\times [-2^{i_s+1},2^{i_s}]}\right),
\end{equation*}
but covered by the edge set in \eqref{le-Am}. The ``spreading procedure'' will stop upon $A_m=\varnothing$ and from \eqref{An} we can see that $A_{m'}=\varnothing$ for all $m'>m$ if $A_m=\varnothing$. Moreover, $\cup_{m\geq 0}A_m\subset S\cup \{i_r+1\}$ represents the set of subscripts for intervals (``spreaders'') at the end of the above spreading procedure. Let
\begin{equation}\label{Mr}
M_r:=\min\left\{k\in \mathds{N}:\ i_k\in \cup_{m\geq 0}A_m\right\}
\end{equation}
be the subscript of the last pair of intervals that are covered in this spreading procedure. Combining this with definitions of good pair of intervals and $\xi_k$, we can see that
\begin{equation}\label{Mr-zeta}
\mathds{P}[\xi_{i_1}=0,\cdots,\xi_{i_r}=0]\leq \mathds{P}[M_r\geq r].
\end{equation}
(See Figure \ref{Mr-figure} for an illustration).
We will use some estimates for the Firework process (see e.g.\ \cite{GGJR14}) to bound the right-hand side of \eqref{Mr-zeta} from above, which in turn provides an upper bound on the left-hand side of \eqref{Mr-zeta}.

\begin{figure}[htbp]
\centering
\includegraphics[height=1cm,width=15cm]{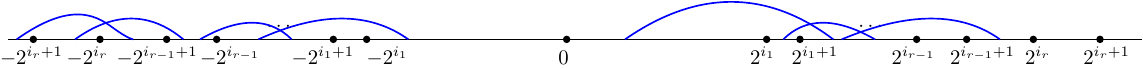}
\caption{The illustration for \eqref{Mr-zeta}. The event $\{\xi_{i_1}=0,\cdots,\xi_{i_r}=0\}$ is equivalent to that $(I_{i_1}^+,I_{i_1}^-),\cdots,(I_{i_r}^+,I_{i_r}^-)$ are all covered by long edges. Thus we have  $\{\xi_{i_1}=0,\cdots,\xi_{i_r}=0\}\subset\{M_r\geq r\}$.}
\label{Mr-figure}
\end{figure}

\begin{lemma}\label{Mr-upper}
There exists $\kappa \in (0,1)$ {\rm(}depending only on $\beta${\rm)} such that for all $r\geq 1$,
$$
\mathds{P}[M_r\geq r]\leq \kappa^r.
$$
\end{lemma}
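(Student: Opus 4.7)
The plan is to prove Lemma~\ref{Mr-upper} by induction on $r$, exploiting two structural features already set up in the paper. First, the ranges $L_2, L_3, \ldots, L_{r+1}$ are mutually independent, because $L_k$ is determined by the edges whose outer endpoint lies in the annulus $[-2^{i_k+1}, -2^{i_{k-1}+1}) \cup (2^{i_{k-1}+1}, 2^{i_k+1}]$, and these annuli are pairwise disjoint across $k$. Second, the tail of each $L_k$ is geometric-like, controlled uniformly by Lemma~\ref{Rk}.

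For the base case $r = 1$, the event $\{M_1 \geq 1\}$ reduces via the definitions of $A_m$ and $M_r$ to a simple event about $L_2$ whose probability is bounded by a constant $\kappa_0(\beta) := 1 - (1 - 3/4)^{2\beta}$, strictly less than $1$ for every $\beta > 0$. This provides both the base case and a first candidate value of $\kappa$.

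For the inductive step I would condition on $L_{r+1}$. On the event $\{L_{r+1} = \ell\}$ with $\ell \geq 1$, the first layer is $A_1 = \{i_r, i_{r-1}, \ldots, i_{r+1-\ell}\}$, and $\{M_r \geq r\}$ can only be realized through the continued spreading driven by the independent fireworks $L_2, \ldots, L_r$. My strategy would be to show that the probability the continued spreading sustains $\{M_r \geq r\}$ is bounded by the corresponding quantity for a sub-problem with $r - \ell$ pairs, to which the induction hypothesis applies. Summing over $\ell$ against the tail bound from Lemma~\ref{Rk}, I would aim to derive a self-consistent inequality of the shape
\begin{equation*}
\mathds{P}[M_r \geq r] \;\leq\; \sum_{\ell \geq 1} \mathds{P}[L_{r+1} = \ell] \cdot \kappa^{r - \ell},
\end{equation*}
and then choose $\kappa = \kappa(\beta) \in (0,1)$ so that $\sum_{\ell \geq 1} \mathds{P}[L_{r+1} = \ell]\, \kappa^{-\ell} \leq 1$. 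The geometric decay $\mathds{P}[L_{r+1} > \ell] \lesssim \beta/2^\ell$ supplied by Lemma~\ref{Rk} makes such a choice of $\kappa$ available for every $\beta > 0$, though $\kappa$ will tend to $1$ as $\beta \to \infty$.

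The main obstacle I anticipate is the multi-starting-point nature of the continued spreading: after step~$1$ the fire is lit at every position in $A_1$, not just at one, so a naive union bound over $A_1$ would introduce a spurious factor of $\ell$ that interferes with the geometric terms needed to close the induction. The likely resolution, drawing on the Firework process theory of \cite{GGJR14}, is either (i) to exploit the fact that the reached set of a Firework on a contiguous block of indices is itself a contiguous interval, so that only the deepest position in $A_1$ ultimately matters for further extension, reducing the continuation to a single-start sub-Firework of length $r-\ell$; or (ii) to formulate a slightly stronger inductive statement that already accommodates Fireworks with multiple starting points and to prove that statement by the same conditioning argument. Either route lets one absorb the multi-start factor into the constant in front of $\kappa^r$ without spoiling the exponential rate.
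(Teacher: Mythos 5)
Your proposal diverges from the paper's proof, which after bounding the tails of the $L_k$ uniformly (Lemma~\ref{Rk}) constructs a \emph{stochastically dominating i.i.d.\ Firework} $(\widetilde L_k)$ and then invokes \cite[Proposition~1]{GGJR14} as a black box to get the exponential tail. You instead try to prove the exponential tail from scratch by induction on $r$, conditioning on $L_{r+1}$. The numerology of your target recursion is sound (the $\ell=0$ term dies out, so $\sum_{\ell\geq 1}\mathds{P}[L_{r+1}=\ell]\,\kappa^{-\ell}\leq 1$ can indeed be arranged for $\kappa$ close enough to $1$), but the step that would feed the induction is wrong.

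The gap is the claimed reduction of the continuation to ``a sub-problem with $r-\ell$ pairs.'' After conditioning on $L_{r+1}=\ell$ the process continues from the whole contiguous block $\{i_{r+1-\ell},\ldots,i_r\}$ of lit positions, and \emph{every} lit position has fresh range $L_k$ that may push the frontier further; the reaching event becomes $\{\forall j\leq r+1-\ell:\ \exists k\in[j,r],\ k-L_k<j\}$, where the minimum is over the full window $[j,r]$, not just $[j,r+1-\ell]$. A straightforward coupling shows this multi-start continuation is therefore \emph{more} likely to reach the bottom than the single-start Firework on $r-\ell$ pairs, so the inequality you need ($\leq$) runs in the opposite direction ($\geq$). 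This undermines your display inequality and the induction cannot close as written. Your anticipated resolution (i) rests on the premise that ``only the deepest position in $A_1$ ultimately matters,'' but that is false: a higher lit site $k$ with a large range $L_k$ can extend the frontier beyond what the deepest site does, and discarding those sites gives a lower bound, not the upper bound you need. Resolution (ii) (a stronger inductive statement for multi-start Fireworks) could in principle be made to work, but you do not specify the statement, and formulating one that simultaneously decreases in $r$ and controls the extra lit block is precisely the content one ends up extracting from the renewal/cut-point analysis in \cite{GGJR14} — which is what the paper opts to cite directly, together with the preliminary step (which your proposal omits) of passing to an i.i.d.\ dominating sequence $\widetilde L_k$ so that the cited result applies.
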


\begin{proof}
For $k\in[2,r+1]_{\mathds{Z}}$ and $s\in [0,k-1]_{\mathds{Z}}$, denote
$$
\alpha_k(s)=\mathds{P}[L_k\leq s]=1-\mathds{P}[L_k>s].
$$
Then from Lemma \ref{Rk}, we can see that for each $s\in [0,r]_{\mathds{Z}}$,
\begin{equation}\label{alpha-r}
\min_{s<k\leq r+1}\alpha_k(s)\geq \left(1-\frac{3}{2^{s+1}+2}\right)^{2\beta}=:\widetilde{\alpha}(s).
\end{equation}
Moreover, from the fact that $\log(1-x)\geq -2x$ for all $x\in (0,3/4)$, we can see that
\begin{equation}\label{alpha-tilde}
\begin{aligned}
\log\prod_{s\geq 0}\widetilde{\alpha}(s)&=2\beta\sum_{s\geq 0}\log\left(1-\frac{3}{2^{s+1}+2}\right)\geq -12\beta \sum_{s\geq 0}\frac{1}{2^{s+1}}=-12\beta.
\end{aligned}
\end{equation}

We now consider a sequence of i.i.d.\ random variables $\widetilde{L}_{1},\cdots, \widetilde{L}_{r+1}$ with the distribution $\widetilde{\alpha}(s)$, and define $\widetilde{A}_m$ and $\widetilde{M_r}$ according to \eqref{An} and \eqref{Mr} by replacing $L_k$ with $\widetilde{L}_k$, respectively.
Then from \eqref{alpha-r} and the independence of $L_1,\cdots, L_{r+1}$ as previously mentioned before \eqref{cover}, we can see that
\begin{equation}\label{Mn-tilde}
\mathds{P}[M_r\geq s]\leq \mathds{P}\left[\widetilde{M}_r\geq s\right]\quad  \text{for all } s\in [0,r]_{\mathds{Z}}.
\end{equation}
Additionally, it follows from \eqref{alpha-r} and \eqref{alpha-tilde} that $\widetilde{\alpha}(s)$ increases exponentially to 1 as $s\to \infty$ and $\prod_{s\geq 0}\widetilde{\alpha}(s)\geq \e^{-12\beta}>0$, which implies that conditions stated in \cite[Proposition 1]{GGJR14} are satisfied. Consequently, by applying \cite[Proposition 1]{GGJR14} to $\widetilde{\alpha}(s)$, we get that there exist $\kappa_1\in (0,1)$ and $R>0$ (both depending only on $\beta$) such that for all $r\geq R$,
\begin{equation}\label{kappa1}
 \mathds{P}\left[\widetilde{M}_r\geq r\right]\leq \kappa_1^r.
\end{equation}
Moreover, choose $\kappa_2\in (0,1)$ (depending only on $\beta$) such that $\kappa_2^R\geq 1-4^{-\beta}$. By combining this with Lemma \ref{Rk}, we see that for all $r<R$,
\begin{equation}\label{kappa2}
\mathds{P}\left[\widetilde{M}_r\geq r\right]\leq 1-\mathds{P}[L_{r+1}=0]\leq 1-4^{-\beta}\leq \kappa_2^r.
\end{equation}
Let us denote $\kappa=\max\{\kappa_1,\kappa_2\}$. Then \eqref{kappa1} and \eqref{kappa2} yield that $\mathds{P}\left[\widetilde{M}_r\geq r\right]\leq \kappa^r$ for all $r\geq 1$. Hence, we obtain the desired statement by combining this with \eqref{Mn-tilde}.
\end{proof}

With the above lemmas at hand, we can provide the
\begin{proof}[Proof of Lemma \ref{induct-w-lem}]
Recall that $M_r$ is defined in \eqref{Mr}. Then from it and Lemma \ref{Mr-upper}, we arrive at
$$
w_{r}=\mathds{P}[\xi_{i_1}=0,\xi_{i_2}=0,\cdots,\xi_{i_r}=0]\leq  \mathds{P}\left[M_r\geq r\right]\leq \kappa^r,
$$
where $\kappa$  is the constant in Lemma \ref{Mr-upper}, depending only on $\beta$.
\end{proof}

Combining the above lemmas with \cite[Theorem 3.4]{AA97} (see also \cite[Theorem 1]{RV10}), we can present the

\begin{proof}[Proof of Proposition \ref{ld-xi}]
Lemma \ref{induct-w-lem} implies that conditions stated in \cite[Theorem 1]{RV10} are satisfied with $\delta=\kappa\in (0,1)$. Thus, by applying \cite[Theorem 1]{RV10} with $\delta=\kappa$ and $\gamma =(1+\kappa)/2$, we conclude that
\begin{equation}\label{xi-lt}
\mathds{P}\left[\sum_{i=1}^{n-1}\xi_i\leq (1-\kappa)n/2\right]=\mathds{P}\left[\sum_{i=1}^{n-1}(1-\xi_i)\geq \gamma n\right]\leq \exp\left\{-nD(\gamma \| \delta)\right\},
\end{equation}
where $D(\gamma \| \delta):=\gamma\log\frac{\gamma}{\delta}+(1-\gamma)\log\frac{1-\gamma}{1-\delta}$ is the binary relative entropy between $\gamma$ and $\delta$.
In addition, note that $D(\gamma\|\delta)\geq 2(\gamma-\delta)^2=(1-\kappa)^2/2$. Plugging this into \eqref{xi-lt} yields the desired result.
\end{proof}

\section{The estimates of effective resistance $\widehat{R}_n$}\label{sect-Rhat}

For $n\geq1$, let $N=2^n$ and let $\widehat{R}_n$ be the effective resistance generated by unit flows from $(-\infty,0]$ to $(N,+\infty)$, passing through the interval $(0,N]$. That is,
\begin{equation}\label{def-Rn-2}
\begin{aligned}
\widehat{R}_n&=\inf\left\{\frac{1}{2}\sum_{u\sim v}f^2_{uv}:\ f\ \text{is a unit flow from $(-\infty,0]$ to }(N,+\infty)\right.\\
&\quad \quad \quad\quad \quad \quad \quad \quad \quad  \text{and }f_{uv}=0\ \text{for all }\langle u,v\rangle\in \mathcal{E}_{(-\infty,0]\times (N,+\infty)} \Bigg\}.
\end{aligned}
\end{equation}
It is obvious that $\widehat{R}_n$ depends only on edges where at least one of their endpoints falls within the interval $(0,N]$.

The main focus of this section is to establish an exponential lower bound for $\widehat{R}_n$ (note that exponential in $n$ is consistent with polynomial in the size of the interval).

\begin{theorem}\label{Rn}
For any $\beta>0$, there is a constant $\delta>0$ {\rm(}depending only on $\beta${\rm)} such that the following holds.
For any $\varepsilon\in(0,1/2]$, there exists a constant $c=c(\varepsilon)>0$ {\rm(}depending only on $\beta$ and $\varepsilon${\rm)} such that for all $n\geq 1$,
$$
\mathds{P}\left[\widehat{R}_n\geq c{\rm e}^{\delta n}\right]\geq 1-\varepsilon.
$$
\end{theorem}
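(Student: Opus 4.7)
The plan is to reduce Theorem \ref{Rn} to a recursive inequality on quantiles of $\widehat{R}_n$ and then solve that recursion via the analytic fact \eqref{main-eq}. For $\alpha \in (0, 1/2]$, let $a_{\alpha, n}$ denote the largest $a \geq 0$ with $\mathds{P}[\widehat{R}_n \geq a] \geq 1 - \alpha$. It suffices to show that there exist $\delta > 0$ (depending only on $\beta$) and $\widetilde{c}(\alpha) > 0$ such that $a_{\alpha, n} \geq \widetilde{c}(\alpha) e^{\delta n}$ for all $n \geq 1$, which is exactly the conclusion of the theorem with $c(\varepsilon) = \widetilde{c}(\varepsilon)$.

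The core step is to establish the recursive lower bound
\begin{equation*}
a_{\alpha, n} \geq c \sum_{k=1}^{\lfloor \sqrt{n/\log n}\rfloor} \frac{a_{\alpha, n - k \log k}}{k \log k}
\end{equation*}
by multi-scale analysis. Fix an admissible unit flow $f$ from $(-\infty, 0]$ to $(N, +\infty)$. Such a flow enters $(0, N]$ through some (random, finite) set of entry points $z$. For each such $z$ and each scale $i$, Definition \ref{def-g} ensures that whenever the pair $(I_i^-(z), I_i^+(z))$ is good, any path from $z$ leaving $[z - 2^{i+1}, z + 2^{i+1}]$ must cross $I_i^-(z) \cup I_i^+(z)$; consequently, all flow emanating from $z$ is forced through these two intervals. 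Combining Proposition \ref{ld-xi} (translated to center $z$) with a union bound over a deterministic controllable set of possible entry points, with probability at least $1 - \alpha/2$ there are at least $c' n$ good pairs around each entry point.

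Next, I would group the scales into consecutive blocks, where the $k$-th block consists of $k \log k$ scales, and in each block select a good pair of intervals that serves as a cut-set. Inside such a pair at block $k$, the residual flow problem is an independent instance of the original one at scale $n - k \log k$, whose effective resistance is at least $a_{\alpha, n - k \log k}$ with probability at least $1 - \alpha/2$. A parallel-resistance (Cauchy--Schwarz) computation across the essentially disjoint blocks then yields the displayed recursion, where the denominator $k \log k$ arises because the flow amount can split across the $k \log k$ scales within a single block. Once the recursion is in hand, the ansatz $a_{\alpha, n} \geq \widetilde{c}(\alpha) e^{\delta n}$ closes the induction: the right-hand side becomes $\widetilde{c}(\alpha) e^{\delta n} \cdot c \sum_{k} \frac{e^{-\delta k \log k}}{k \log k}$, and for $\delta$ small enough and $n$ large enough the truncated sum exceeds $1/c$, since $\sum_{k \geq 1} \frac{e^{-\delta k \log k}}{k \log k}$ tends to a positive constant as $\delta \to 0^+$.

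I expect the main obstacle to be that the infimum in \eqref{def-Rn-2} forces us to control \emph{all} admissible unit flows simultaneously, not a single chosen one. This is where the cut-set interpretation of good pairs is essential: since every flow must cross such intervals, the energy lower bound applies uniformly. Additional care will be needed to ensure that the entry points of arbitrary flows lie in a deterministic set on which Proposition \ref{ld-xi} and the associated union bound are applicable; that the good pairs selected in distinct blocks and around distinct entry points remain compatible so the parallel-resistance argument does not double-count edges; and that the boundary restriction $f_{uv} = 0$ on direct $(-\infty, 0] \times (N, +\infty)$ edges persists after conditioning inside a good pair, so that the induction is genuinely on the same quantity $\widehat{R}$ at the reduced scale $n - k \log k$.
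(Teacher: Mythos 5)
Your overall architecture matches the paper's: define the quantile $a_{\alpha,n}$, prove a recursion of the type displayed in \eqref{main-eq} by a multi-scale cut-interval argument, and close it with an exponential ansatz. You also correctly identify the central obstacle (one must control all admissible unit flows at once, using that good pairs are cut-sets) and the rough source of the $k\log k$ denominator (flow splitting over $O(d_k)$ entry points, then Cauchy--Schwarz).

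However, there is a genuine gap in the closing step, and it is precisely the subtlety that forces the paper to split the argument into Proposition~\ref{quantile-lb} and a separate proof of Theorem~\ref{Rn}. You require the ``good pairs around each entry point'' event and the residual-resistance events to hold with probability $\geq 1-\alpha/2$, so that the bound you obtain is a statement about the $(1-\alpha)$-quantile $a_{\alpha,n}$. But pushing the failure probability below $\alpha/2$ for \emph{every} entry point (of which there are $\Theta(n)$) requires the number of layers explored per step to grow as $\alpha\to 0$: in the paper this is the block parameter $M=M(\alpha)$ in \eqref{def-bk}, which enters because the per-block failure probability from Lemma~\ref{pro-ki} is $\exp\{-c_1(b_{k-1}-b_k)\}$ and needs to be summable and $\varepsilon$-small. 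Consequently the quantities $d_k$ in the true recursion (Proposition~\ref{an-thm}) scale like $kM(\alpha)$ rather than the $\alpha$-uniform $k\log k$ you write, and solving the recursion by your ansatz then yields only $\delta=\delta(\alpha)$ (this is Proposition~\ref{quantile-lb}). That is strictly weaker than the theorem, which demands $\delta$ depending only on $\beta$. You cannot simultaneously have $\alpha$-uniform block lengths in the recursion and per-block failure probability $\leq\alpha/2$; these two requirements pull $M$ in opposite directions.

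The paper resolves this with a two-stage decoupling which your proposal omits. First it applies Proposition~\ref{quantile-lb} at the \emph{fixed} level $\alpha=1/2$, producing $\delta(\tfrac12)$ depending only on $\beta$. Then it reruns Lemma~\ref{induct-thm} with $\alpha=1/2$ quantiles but with a \emph{different} choice of $M=M(\varepsilon)$ (and $n_1(\varepsilon)$ in Lemma~\ref{z-eta}) so that $\mathds{P}[E_{1/2,n}\cap F_{\varepsilon,n}]\geq 1-\varepsilon$. On this event, keeping only the $k=2$ term of the sum gives $\widehat{R}_n\geq c\,a_{1/2,b_2+1}/d_2$, and the $\varepsilon$-dependence of $M$ is absorbed into the multiplicative constant $\widetilde c_1(\varepsilon)$ while the exponent stays $\delta(\tfrac12)$. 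Without some version of this separation, your argument proves the quantile estimate but not the uniform-in-$\varepsilon$ exponent claimed in Theorem~\ref{Rn}. A secondary, minor imprecision: $\sum_{k\geq1}e^{-\delta k\log k}/(k\log k)$ diverges, not converges to a constant, as $\delta\to 0^+$; the statement you need is simply that it exceeds $1/c$ for $\delta$ small, which is true, but the divergence is what makes the exponent exist at all.
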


The proof of Theorem \ref{Rn} will be provided in Subsection \ref{proof-mr1}. A main input for proving Theorem \ref{Rn} is to lower-bound the quantile of $\widehat{R}_n$, as incorporated in Proposition \ref{quantile-lb} below. To be precise, for $n>1$ and $\alpha \in (0, 1)$, we define the $(1-\alpha)$-quantile of $\widehat{R}_n$ as
\begin{equation}\label{def-epsi}
a_{\alpha,n}=\sup\left\{r\geq 0: \mathds{P}\left[\widehat{R}_n\geq r\right]\geq 1-\alpha\right\}.
\end{equation}

\begin{proposition}\label{quantile-lb}
For any $\alpha\in(0,1/2]$, there exist constants $c=c(\alpha)>0$ and $\delta=\delta(\alpha)>0$ {\rm(}both depending only on $\beta$ and $\alpha${\rm)} such that
$a_{\alpha,n}\geq c{\rm e}^{\delta n}$ for all $n\geq 1$.
\end{proposition}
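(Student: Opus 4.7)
The plan is to deduce Proposition \ref{quantile-lb} from a recursive lower bound for the $(1-\alpha)$-quantiles $a_{\alpha,n}$, exactly the implication foreshadowed in \eqref{main-eq}. Concretely, I would first establish
$$
a_{\alpha,n} \;\geq\; c(\alpha) \sum_{k=1}^{\lfloor \sqrt{n/\log n}\rfloor} \frac{a_{\alpha,\, n-k\log k}}{k \log k}
$$
for all $n$ sufficiently large, with $c(\alpha)>0$ depending only on $\alpha$ and $\beta$. Granting this, the rest is a clean induction; the real work is in setting up the recursion.

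To obtain the recursion I would fix an admissible unit flow $f$ in \eqref{def-Rn-2} and focus on a point $z \in (0,N]$ through which a positive fraction of $f$ enters $(0,N]$ (extracted by a pigeonhole argument on the entry points, weighted by their contribution). Partition the scale index set into consecutive \emph{steps}, where step $k$ consists of $k\log k$ layers ending at cumulative layer $\sum_{j\leq k} j\log j$, so that the total number of steps is of order $\sqrt{n/\log n}$. Proposition \ref{ld-xi} applied to the $k\log k$ layers of step $k$, together with a union bound over $k\leq \sqrt{n/\log n}$, shows that off an event of probability $\leq \alpha$, each step contains a positive-density collection of good pairs of intervals around $z$ in the sense of Definition \ref{def-g}. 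A good pair at layer $i$ is a genuine cut-set, so the restriction of $f$ to the annular box $[z-2^{i+1},z+2^{i+1}]$ is itself a unit flow (after normalisation) at scale $2^i$, whose energy is bounded below, at the quantile level, by $a_{\alpha,\,n-i'}$ for an appropriate $i'\approx i$. Applying Cauchy--Schwarz across the $k\log k$ layers inside a single step produces the denominator $k\log k$, and summing over the steps gives the recursion.

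The main obstacle is that the infimum defining $\widehat R_n$ is taken over every admissible unit flow, so the cut-interval structure must bind all candidate flows simultaneously rather than a single chosen minimiser. This is where Definition \ref{def-g} pays off: good pairs are cuts at the level of the graph, so any unit flow from $(-\infty,0]$ to $(N,+\infty)$ is automatically routed through them. The rigorous identification of the restricted sub-flows with admissible unit flows in the sub-problems (so that their energies can be lower bounded by $a_{\alpha,\cdot}$) is, as signposted in the outline, the content of Section \ref{sect-Rhat} and Lemma \ref{induct-thm}; absorbing this bookkeeping correctly is the delicate step.

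Given the recursion, Proposition \ref{quantile-lb} follows by induction on $n$ using the ansatz $a_{\alpha,n} \geq Ce^{\delta n}$. Substitution yields
$$
a_{\alpha,n} \;\geq\; c(\alpha)\, C e^{\delta n} \sum_{k=1}^{\lfloor \sqrt{n/\log n}\rfloor} \frac{e^{-\delta k \log k}}{k\log k}.
$$
Since $\sum_{k\geq 2} \frac{1}{k\log k}=\infty$, the untruncated series $\sum_{k\geq 2} \frac{e^{-\delta k\log k}}{k\log k}$ tends to $+\infty$ as $\delta\downarrow 0$ by monotone convergence. Choose $\delta=\delta(\alpha)>0$ small so that $c(\alpha)\sum_{k=1}^{\infty} \frac{e^{-\delta k\log k}}{k\log k}\geq 2$; then for all $n$ beyond some $n_1(\alpha)$, the truncation to $k\leq \sqrt{n/\log n}$ still contributes at least $1/c(\alpha)$, which closes the inductive step. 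The finitely many base cases $n\leq n_1(\alpha)$ are handled by choosing $C=C(\alpha)>0$ small enough, using that $a_{\alpha,n}$ is a deterministic positive number for each fixed $n$ (as $\widehat R_n>0$ almost surely, so its $(1-\alpha)$-quantile is positive for $\alpha\leq 1/2$).
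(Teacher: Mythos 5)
Your proposal is correct and follows essentially the same route as the paper: Proposition \ref{quantile-lb} is deduced from the recursion in Proposition \ref{an-thm} by the same induction, exploiting the divergence of $\sum_k 1/(k\log k)$ to choose $\delta$ small enough that a finite initial segment of the recursion already reproduces the ansatz $a_{\alpha,n}\geq Ce^{\delta n}$, with the finitely many base cases absorbed into $C$. The only superficial difference is that the paper first truncates the recursion to a fixed $M_1$ with $c\sum_{k=2}^{M_1}1/d_k>2$ and then picks $\delta$ with $e^{\delta d_{M_1}}<2$, whereas you pick $\delta$ directly so that the full weighted series exceeds $2/c(\alpha)$ and then truncate; these are equivalent reorganisations of the same argument (and note the sum should start from $k=2$, since $k\log k$ vanishes at $k=1$).
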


The proof of Proposition \ref{quantile-lb} will also be included in Subsection \ref{proof-mr1}.
Note that the estimate in Proposition \ref{quantile-lb} is a weaker version of Theorem \ref{Rn}. This distinction arises from the fact that the parameter $\delta$ in Proposition \ref{quantile-lb} is allowed to depend on the parameter $\alpha$, whereas $\delta$ in Theorem \ref{Rn} only depends on $\beta$.

Hereon, we provide a general overview on our proof of Theorem \ref{Rn}, which encapsulates the primary challenge.
Initially, for a fixed unit flow from $(-\infty,0]$ to $(N,+\infty)$ passing through $(0,N]$,
we label points where the unit flow enters $(0,N]$ as $z_1, z_2,...,z_{\overline{\eta}}$.
Here $\overline{\eta}$ represents a random variable taking values on $\mathds{N}$ (see specific definitions in the final part of this subsection).
The area surrounding these points is segmented into intervals of length $2^i$ for $i\geq 1$, which we will refer to as the $i$-th layer intervals.

First, we say an interval is very good if any unit flow must traverse it and generate substantial energy within it (see Definition \ref{def-vg}).
In search of very good intervals, we will investigate intervals in different layers step by step. The exact number of layers in each step is determined by a sequence $\{b_k\}$ (see \eqref{def-bk}), whose definition includes two parameters $L$ and $M$ to facilitate our adjustments later.
Using the estimate for the number of good intervals in Proposition \ref{ld-xi}, it can be inferred that with high probability we can find at least one very good pair of intervals near every inflow point in each step, as incorporated in Proposition \ref{Prob-vg}. This event will be defined as  $E_{\alpha,n}$ in Definition \ref{def-E} below.

Next, a recursive formula for the $(1-\alpha)$-quantiles $\{a_{\alpha,i}\}_{ i\geq 1}$ of the effective resistances $\{\widehat{R}_i\}_{i\geq 1}$ will be established in Subsection \ref{Rn-sect}.
To arrive at this, we will show that on the event $E_{\alpha,n}\cap F_{\alpha,n}$ (here $F_{\alpha,n}$ is an event to ensure that $z_{\overline{\eta}}$ is not close to the point $2^n$, see Lemma \ref{z-eta}), any flow must pass through many very good intervals,
each of which provides sufficiently large energy and thus result in a significant contribution to the effective resistance.
This will allow us to obtain an effective lower bound for the resistance $\widehat{R}_n$ (see  Lemma \ref{induct-thm}). Thus by appropriately selecting parameters $M$ and $L$, we can establish the recursive formula for the $(1-\alpha)$-quantiles $\{a_{\alpha,i}\}_{i\geq 1}$ in Proposition \ref{an-thm}.

After that, we will establish an exponential lower bound for the $(1-\alpha)$-quantile $a_{\alpha,n}$ from the recursive formula in Proposition \ref{an-thm}. Combining this with appropriately selecting parameters $M$ and $L$, we will complete the proof of Theorem \ref{Rn}.  This is included in Subsection \ref{proof-mr1}.

In the final of this part, we will introduce some notations which will be used repeatedly.
Fix a sufficiently large $n\in \mathds{N}$ and recall that $N=2^n$.
For convenience, let $\mathcal{E}_n=\mathcal{E}_{(-\infty,0]\times (0,N]}$ denote the set of all  edges  with one endpoint in the interval $(-\infty,0]$ and the other endpoint in $(0,N]$, and denote $Z$ as the collection of those endpoints in $(0,N]$.  Formally,
\begin{equation}\label{def-Z}
Z=\left\{z\in (0,N]_{\mathds{Z}}:\ \exists u\in (-\infty,0]_{\mathds{Z}}\ \text{such that }\langle u,z\rangle\in \mathcal{E} \right\}.
\end{equation}
Clearly, $1\in Z$ since $\langle 0,1\rangle\in \mathcal{E}$.
Additionally, for $i\in[1,n]_{\mathds{Z}}$, let $\eta_i$ denote the number of points in $Z\cap(2^{i-1},2^{i}]$. More specifically, for $u,v\in \mathds{Z}$, denote
\begin{equation}\label{xi-uv}
\xi_{uv}=
\begin{cases}
1,\quad &\langle u,v\rangle\in \mathcal{E},\\
0,\quad &\text{otherwise}.
\end{cases}
\end{equation}
Then $\eta_i$ can be expressed as
\begin{equation*}\label{eta-i}
\eta_i=\sum_{2^{i-1}<v\leq 2^i}\left[1-\prod_{u\leq 0}\left(1- \xi_{uv}\right)\right].
\end{equation*}
According to the independence of edges, it follows that $\eta_1, ..., \eta_{n}$ are independent random variables with expectations
\begin{equation*}\label{mu-i}
\begin{aligned}
\mu_i:=\mathds{E}[\eta_i]&=\sum_{2^{i-1}<v\leq 2^i}\left[1-\prod_{u\leq 0}\mathds{E}\left(1- \xi_{uv}\right)\right]=\sum_{2^{i-1}<v\leq 2^i}\left[1-\exp\left\{-\beta\int_{-\infty}^{1}\int_v^{v+1}\frac{1}{|x-y|^2}\d x\d y\right\}\right]
\end{aligned}
\end{equation*}
for $i\in [1,n]_{\mathds{Z}}$. By a simple calculation, we can see that there exists a constant $c=c(\beta)\in (0,1)$ (depending only on $\beta$) such that
 \begin{equation}\label{mu-i}
 c\beta\log 2\leq  \mu_i\leq \beta\log 2\quad \text{for all } i\in [1,n]_{\mathds{Z}}.
 \end{equation}
We also denote $\overline{\eta}=\sum_{l=1}^{n}\eta_l$. It is clear that
\begin{equation*}\label{mu}
c\beta n\log 2\leq \mathds{E}[\overline{\eta}]\leq \beta n\log 2.
\end{equation*}
We now sort points in $Z$ in the ascending order and denote them as $z_0=1,z_1,z_2\cdots, z_{\overline{\eta}}$.

Denote
\begin{equation}\label{I+-}
I^-_{i,j}=[z_i-2^{j+1},z_i-2^{j}) \quad \text{and}\quad I^+_{i,j}=(z_i+2^{j},z_i+2^{j+1}]\quad \text {for all } i\in [0,\overline{\eta}],\ j\geq 0.
\end{equation}
Write $\{b_k\}_{k\geq 0}$ for the sequence satisfying
\begin{equation}\label{def-bk}
b_0=n,\quad b_k=b_{k-1}-M -L\log (n+M-b_{k-1})\quad \text{for } k\geq 1,
\end{equation}
where $M$ and $L$ 
are two constants which will be determined later (see \eqref{cond-L} and \eqref{E-M} below). Note that $\{b_k\}$ is dependent on $n$, but for the sake of brevity, the notation we are using here does not reflect this.
We also define
\begin{equation}\label{def-etabar}
\overline{\eta}_{b_k}=1+\sum_{l=1}^{b_k}\eta_l
\end{equation}
and
\begin{equation}\label{def-Kn}
K_n=\sup\left\{k\geq 0:\ b_k\geq 0\right\}.
\end{equation}

\subsection{Very good intervals and associated estimates}\label{En}

We begin by extending the definition of $\widehat{R}_n$ to a more general definition of effective resistance produced by unit flows passing through an interval. Specifically, for two intervals $I_1=[x_1,x_2]$ and $I_2=[x_3,x_4]$ with $-\infty\leq x_1<x_2<x_3<x_4\leq +\infty$, define
\begin{equation}\label{def-Rhat}
\begin{aligned}
\widehat{R}(I_1,I_2)&=\inf\left\{\frac{1}{2}\sum_{u\sim v}f_{uv}^2:\ f\ \text{is a unit flow from $I_1$ to }I_2\ \text{and }\right.\\
&\quad \quad \quad\quad   f_{uv}=0\ \text{for all }\langle u,v\rangle\in \mathcal{E}_{(-\infty,x_2]\times [x_3,+\infty)}\cup\mathcal{E}_{[x_1,x_4]^c\times (x_2,x_3)} \Bigg\}.
\end{aligned}
\end{equation}
We will refer to $\widehat{R}(I_1,I_2)$ as the effective resistance generated by unit flows (confined to $[x_1,x_4]$) passing through the interval $[x_2,x_3]$.
Moreover, from \eqref{def-Rn-2} and \eqref{def-Rhat}, it is obvious that $\widehat{R}((-\infty,0],(N,+\infty))=\widehat{R}_n$. The definitions of $\widehat{R}(\cdot,\cdot)$ and $R(\cdot,\cdot)$ imply that
$$\widehat{R}_n\geq R((-\infty,0],(N,+\infty)).$$
In particular, when $\mathcal{E}_{(-\infty,0]\times (N,+\infty)}=\varnothing$, we have $\widehat{R}_n= R((-\infty,0],(N,+\infty))$.

It is worth emphasizing that if we remove the edge set $\mathcal{E}_{(-\infty,x_2]\times [x_3,+\infty)}\cup\mathcal{E}_{[x_1,x_4]^c\times (x_2,x_3)}$ from the $\beta$-LRP model and view it as a new graph, then $\widehat{R}(I_1,I_2)$ becomes the classical effective resistance on this new graph. Thus,
$\widehat{R}(\cdot,\cdot)$ clearly possesses the fundamental properties of the effective resistance, as presented in the following lemma.

\begin{lemma}\label{mono}
For any two intervals $I_1,I_2\subset \mathds{Z}$ with $I_1\cap I_2=\varnothing$, we have
\begin{itemize}
\item[\rm (1)] $\widehat{R}(I_1,I_2)\geq R(I_1,I_2)$;
\medskip

\item[\rm(2)] $\widehat{R}(I_1,I_3)\geq \widehat{R}(I_1,I_2)$ for all intervals $I_3\subset I_2$.

\end{itemize}
\end{lemma}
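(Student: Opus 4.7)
My plan is to derive both parts from the observation highlighted immediately before the lemma: $\widehat{R}(I_1, I_2)$ coincides with the classical effective resistance between $I_1$ and $I_2$ in the modified network $G_{I_1, I_2}$ obtained from the $\beta$-LRP by deleting every edge in the forbidden set $\mathcal{E}_{(-\infty, x_2]\times [x_3, +\infty)} \cup \mathcal{E}_{[x_1, x_4]^c\times (x_2, x_3)}$. Both parts will then reduce to Rayleigh-type monotonicity statements comparing effective resistances on appropriate (sub)graphs.

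For part (1), $G_{I_1, I_2}$ is obtained from the $\beta$-LRP graph purely by edge deletion --- equivalently, by setting the conductance of every forbidden edge to zero --- so Rayleigh's monotonicity principle immediately yields $\widehat{R}(I_1, I_2) \geq R(I_1, I_2)$. To interface concretely with the definition \eqref{eff-resis}, one can pass to the dual Dirichlet formula: for each finite $J_1 \subset I_1$, $J_2 \subset I_2$, standard unit-flow/potential duality gives $R(J_1, J_2)^{-1} = \inf\{\sum_{\langle u,v\rangle \in \mathcal{E}}(\phi(u) - \phi(v))^2 : \phi|_{J_1} = 1,\ \phi|_{J_2} = 0\}$, and a standard limiting argument as $J_1, J_2$ exhaust $I_1, I_2$ identifies $R(I_1, I_2)^{-1}$ with $\inf\{\sum_{\langle u,v\rangle \in \mathcal{E}}(\phi(u) - \phi(v))^2 : \phi|_{I_1} = 1,\ \phi|_{I_2} = 0\}$. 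The identical formula for $\widehat{R}(I_1, I_2)^{-1}$ has the same admissible $\phi$'s but sums only over edges of $G_{I_1, I_2} \subseteq \mathcal{E}$, so $\widehat{R}^{-1} \leq R^{-1}$, as required.

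For part (2), I would apply the same Dirichlet comparison, now to the two modified graphs. Given a near-optimizer $\phi^\ast$ for $\widehat{R}(I_1, I_2)^{-1}$, the inclusion $I_3 \subset I_2$ forces $\phi^\ast|_{I_3} = 0$ automatically, so $\phi^\ast$ is admissible for the variational problem defining $\widehat{R}(I_1, I_3)^{-1}$. The desired inequality then reduces to the Dirichlet-energy comparison $\sum_{\langle u,v\rangle \in G_{I_1, I_3}}(\phi^\ast(u) - \phi^\ast(v))^2 \leq \sum_{\langle u,v\rangle \in G_{I_1, I_2}}(\phi^\ast(u) - \phi^\ast(v))^2$, equivalently to the edge-set inclusion $G_{I_1, I_3} \subseteq G_{I_1, I_2}$, i.e.\ to the containment of forbidden edge sets.

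The main obstacle is verifying this combinatorial containment. In the right-truncation regime $I_3 = [x_3, x_4']$ with $x_4' \leq x_4$ that is natural for the paper's multi-scale iteration, the inclusion $[x_1, x_4]^c \subseteq [x_1, x_4']^c$ forces the $\mathcal{E}_{[x_1, \cdot]^c \times (x_2, x_3)}$-piece of the forbidden set for $\widehat{R}(I_1, I_3)$ to contain that for $\widehat{R}(I_1, I_2)$, while the $\mathcal{E}_{(-\infty, x_2]\times [x_3, +\infty)}$-piece is unchanged, so the Rayleigh/Dirichlet comparison goes through cleanly. A purely flow-based alternative --- checking directly that any admissible unit flow for $\widehat{R}(I_1, I_3)$ satisfies the zero-edge constraints of $\widehat{R}(I_1, I_2)$ and is a unit flow from $I_1$ to $I_2 \supseteq I_3$ --- would bottleneck at exactly the same containment check, so monotonicity of the forbidden set under the relevant shrinking of $I_2$ is the key point either way.
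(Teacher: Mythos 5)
Your argument for part~(1) matches the paper's intent: any admissible flow for $\widehat R(I_1,I_2)$ is an admissible flow for $R(I_1,I_2)$ once the zero-constraints on the forbidden edges are dropped, and your Dirichlet-dual/exhaustion step is the right way to reconcile the outer $\inf_{J_1,J_2}$ in \eqref{eff-resis} with the single infimum in \eqref{def-Rhat}.

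For part~(2), the hesitation you flagged is real, and your restriction to the right-truncation regime is not a simplification but a necessary correction: the lemma is \emph{false} for general sub-intervals $I_3\subset I_2$. Take $I_1=\{-1,0\}$, $I_2=\{2,3,4\}$, $I_3=\{3,4\}$ on a realization containing the long edge $\langle 0,2\rangle$ (together with the nearest-neighbour edges, which are always present). For the pair $(I_1,I_2)$ this edge lies in $\mathcal{E}_{(-\infty,x_2]\times[x_3,\infty)}=\mathcal{E}_{(-\infty,0]\times[2,\infty)}$ and is forbidden, so the unit flow is forced through $0\to1\to2$, giving $\widehat R(I_1,I_2)=2$. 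For the pair $(I_1,I_3)$ the relevant forbidden set only rules out jumps over $(0,3)$, so $\langle 0,2\rangle$ is admissible; splitting the unit flow optimally between $0\to1\to2\to3$ and $0\to2\to3$ gives $\widehat R(I_1,I_3)=5/3<2$. This is precisely the mechanism you identified: raising $x_3$ to $x_3'>x_3$ shrinks $\mathcal{E}_{(-\infty,x_2]\times[x_3,+\infty)}$ and thereby opens shortcut edges from $I_1$ into $I_2\setminus I_3$, so the forbidden sets are not nested and the naive flow comparison fails. Fortunately the paper only ever invokes part~(2) with $x_3$ held fixed; in every application it compares $\widehat R(I_1',I_2)$ against $\widehat R(I_1,I_2)$ with $I_1'=[x_1',x_2]\subset I_1=[x_1,x_2]$ (same right endpoint $x_2$) and $I_2$ unchanged (e.g.\ $\widehat R([-N,0],(N,\infty))$ versus $\widehat R_n$, and the translated comparisons behind \eqref{prob-2}), which under the reflection $x\mapsto -x$ is exactly your regime $I_3=[x_3,x_4']$, $x_4'\leq x_4$. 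There your argument is complete --- the first piece of the forbidden set is unchanged, the second grows, and even the direct flow comparison (no Dirichlet dual needed) goes through since admissible flows for $(I_1,I_3)$ form a subset of those for $(I_1,I_2)$. The paper's one-line \emph{``derived directly from \eqref{def-Rhat}''} glosses over this; the lemma should really carry the extra hypothesis $x_3'=x_3$ (equivalently, $x_2'=x_2$ in the $I_1$-shrinking form), which is what both you and the paper actually use.
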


\begin{proof}
The assertion in (1) can be deduced by combining \eqref{def-Rhat} and \eqref{eff-resis},
while the assertion in (2) can be derived directly from \eqref{def-Rhat}.
\end{proof}

Recall that $Z=\{z_0,z_1,\cdots, z_{\overline{\eta}}\}$  is defined in \eqref{def-Z}.
We now introduce the definition of ``very good'' intervals as follows.
\begin{definition}\label{def-vg}
Fix $\alpha\in (0,1/2]$. For $ i\in [0,\overline{\eta}]_{\mathds{Z}}$ and $j\geq 0$, we say the pair of intervals $(I_{i,j}^-,I_{i,j}^+)$ is \textit{$\alpha$-very good} if
\begin{itemize}


\item[(1)] $[(z_i-2^{j+1})\vee 0,\ z_i+2^{j}]$ and $(z_i+2^{j+1},+\infty)$ are not directly connected by any long edge, as well as $(0,\ (z_i-2^{j+1})\vee 0)$ and $[z_i-2^{j},z_i+2^{j+1}]$ are not directly connected by any  long edge;
\medskip

\item[(2)] the resistance $\widehat{R}([z_i-2^{j+1},z_i+2^{j}],(z_i+2^{j+1},+\infty))\geq a_{\alpha,j}$;

\medskip

\item[(3)] the resistance $\widehat{R}((z_i-2^{j},z_i+2^{j+1}],(-\infty,z_i-2^{j+1}])\geq a_{\alpha,j}$.


\end{itemize}

\end{definition}

It is worth emphasizing that the event in (1) is a modified version of the definition of  good pair of intervals in Definition \ref{def-g}.
Indeed, it can be observed that the event in Definition \ref{def-vg} (1) contains the event $\{(I_{i,j}^-,I_{i,j}^+)\text{ is good}\}$. This implies
\begin{equation}\label{prob-1}
\mathds{P}[\text{the event in Definition \ref{def-vg} (1) occurs}]\geq \mathds{P}[(I_{i,j}^-,I_{i,j}^+)\text{ is good}].
\end{equation}
We use the event in Definition \ref{def-vg} (1) here because the effective resistance $\widehat{R}_n$ (see \eqref{def-Rn-2}) we mainly consider in this section does not depend on whether the intervals contained in $(-\infty,0)$ are good.
Specifically, the interval $(-\infty,0)$ serves as the outflow region for flows in the definition of $\widehat{R}_n$, preventing them from re-entering.

In addition, it is clear that the event in  Definition \ref{def-vg} (2) (resp. (3)) depends only on those edges with at least one endpoint falling within the interval $I_{i,j}^+$ (resp. $I_{i,j}^-$),
while the event in  Definition \ref{def-vg}  (1) depends only on the edge set
$$
\mathcal{E}_{[(z_i-2^{j+1})\vee 0,z_i+2^{j}]\times(z_i+2^{j+1},\infty)}
\cup\mathcal{E}_{[z_i-2^{j},z_i+2^{j+1}]\times{(0,(z_i-2^{j+1})\vee0)}}.
$$
Thus given $\mathcal{E}_n$, the events in  Definition \ref{def-vg}  (1), (2) and (3) are independent. Moreover, recall that $a_{\alpha,j}$ represents the $(1-\alpha)$-quantile of $\widehat{R}_j=\widehat{R}((-\infty,0],(2^j,+\infty))$ defined in \eqref{def-epsi}. Then from Lemma \ref{mono} (2) and the translation invariance of the model, we have
\begin{equation}\label{prob-2}
\begin{aligned}
&\mathds{P}[\text{the event in  Definition \ref{def-vg}  (2) occurs}]\\
=&\mathds{P}[\text{the event in  Definition \ref{def-vg}  (3) occurs}]\geq 1-\alpha.
\end{aligned}
\end{equation}

For the sake of concise notation, we write
\begin{equation}\label{eta-bk}
\overline{\eta}_{b_{k-1},b_k}=\sum_{b_k<l\leq b_{k-1}}\eta_l\quad \text{and}\quad \mu_{b_{k-1},b_k}=\mathds{E}[\overline{\eta}_{b_{k-1},b_k}]=\sum_{b_k<l\leq b_{k-1}}\mu_l \quad\text{for all }\quad k\geq 1.
\end{equation}
We next define the following event.

\begin{definition}\label{def-E}
For $\alpha\in(0,1/2]$ and $n\in \mathds{N}$, let $E_{\alpha,n}$ be the event that the following conditions hold.
\begin{itemize}
\item[(1)] For each $k\in[1,K_n]_{\mathds{Z}}$, $\overline{\eta}_{b_{k-1},b_k}\leq 2 \mu_{b_{k-1},b_k}$.

\medskip
\item[(2)] For each $k\in[1,K_n]_{\mathds{Z}}$ and each $i\in (\overline{\eta}_{b_k},\overline{\eta}_{b_0}]_{\mathds{Z}}$, there exists at least one $\alpha$-very good pair of intervals in $\{(I^-_{i,j},I^+_{i,j}):\ j\in (b_{k},b_{k-1}]_{\mathds{Z}}\}$.

\end{itemize}
\end{definition}

\begin{figure}[htbp]
\centering
\includegraphics[height=6cm,width=15cm]{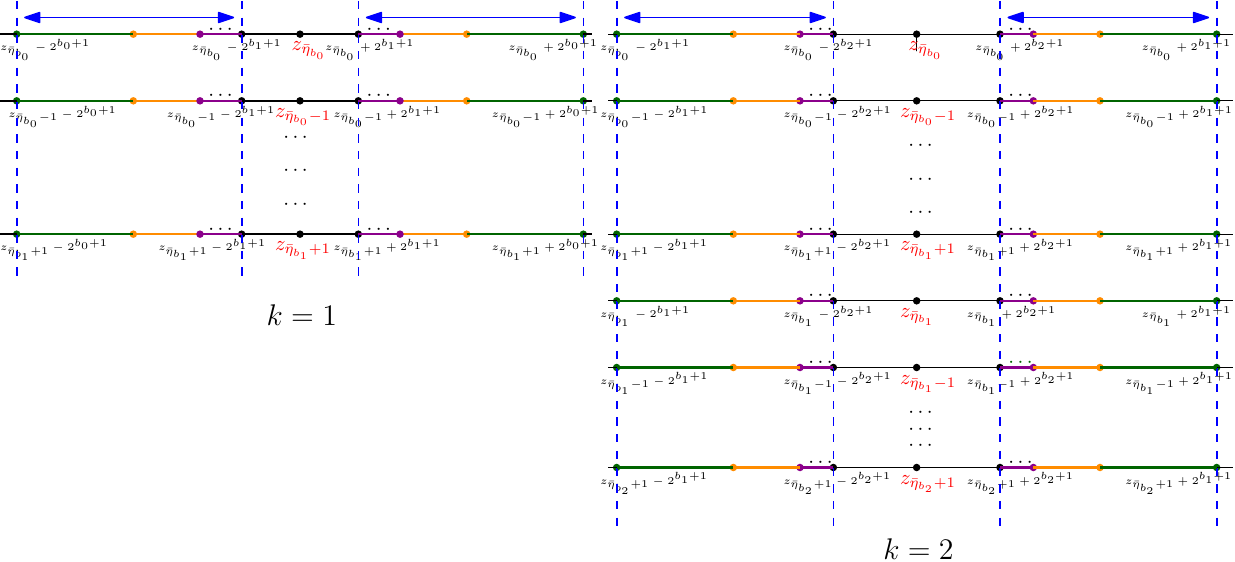}
\caption{The illustration for Definition \ref{def-E} (2) for $k=1$ (on the left) and $k=2$ (on the right). When $k=1$, we search for $\alpha$-very good pairs of intervals among $(I^-_{i,j},I^+_{i,j})$ for $i\in (\overline{\eta}_{b_1},\overline{\eta}_{b_0}]_{\mathds{Z}}$ and $j\in (b_{1},b_{0}]_{\mathds{Z}}$, which are subintervals (marked with green, orange and magenta colors) between blue dashed lines. When $k=2$, we search for $\alpha$-very good pairs of intervals among $(I^-_{i,j},I^+_{i,j})$ for $i\in (\overline{\eta}_{b_2},\overline{\eta}_{b_0}]_{\mathds{Z}}$ and $j\in (b_{2},b_{1}]_{\mathds{Z}}$,  which are subintervals (marked with green, orange and magenta colors) between blue dashed lines.}
\label{explore}
\end{figure}

The main result of this subsection provides the following estimate for $\mathds{P}[E_{\alpha,n}]$.

 \begin{proposition}\label{Prob-vg}
For any $\varepsilon\in(0,1)$, there exist $L>0$ large enough {\rm(}depending only on $\beta${\rm)}, $M=M(\varepsilon)>0$ and $n_0=n_0(\varepsilon)>0$ {\rm(}depending only on $\beta$ and $\varepsilon${\rm)} such that for all $\alpha\in(0,1/2]$ and all $n>n_0$,  we have $\mathds{P}[E_{\alpha,n}]>1-\varepsilon/2$.
\end{proposition}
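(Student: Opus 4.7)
The plan is to bound $\mathds{P}[E_{\alpha,n}^c]$ by separately controlling the failure of items (1) and (2) in Definition \ref{def-E}. Throughout I write $r_k := b_{k-1}-b_k = M+L\log(n+M-b_{k-1})$ and $t_k := n-b_k$, so that $t_0=0$ and $t_k\geq kM$ by \eqref{def-bk}. For item (1), the variables $\eta_1,\dots,\eta_n$ are independent and bounded with means controlled by \eqref{mu-i}, so a standard Chernoff bound gives $\mathds{P}[\overline{\eta}_{b_{k-1},b_k}>2\mu_{b_{k-1},b_k}]\leq \exp(-c_1\mu_{b_{k-1},b_k})\leq \exp(-c_2 r_k)$ for constants depending only on $\beta$. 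Unpacking $r_k$, the per-step failure is at most $\exp(-c_2 M)(M+t_{k-1})^{-c_2 L}$, and summing over $k\leq K_n$ via $M+t_{k-1}\geq kM$ yields $C\exp(-c_2 M)M^{-c_2 L}$ whenever $c_2 L>1$.

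For item (2), I would fix a pair $(i,k)$ and condition on $\mathcal{E}_n$. The indicators $\xi^{(1)}_{i,j}$ of Definition \ref{def-vg}(1) stochastically dominate the plain good indicators of Definition \ref{def-g} (because fewer edges are forbidden), so the proof of Proposition \ref{ld-xi} adapts verbatim to the range of scales $(b_k,b_{k-1}]_{\mathds{Z}}$ and yields that, with conditional probability at least $1-\exp(-c_3 r_k)$, the set $S$ of scales at which condition (1) holds satisfies $|S|\geq (1-\kappa)r_k/2$. On this event, the structural observation preceding \eqref{prob-2} (that condition (2) depends only on edges with an endpoint in $I^+_{i,j}$ and condition (3) only on edges with an endpoint in $I^-_{i,j}$, and these regions are disjoint across scales) renders conditions (2) and (3) essentially independent across scales; since $\alpha\leq 1/2$ each holds with conditional probability at least $1-\alpha\geq 1/2$, so the chance that no $j\in S$ yields an $\alpha$-very good pair is at most $[1-(1-\alpha)^2]^{|S|}\leq (3/4)^{(1-\kappa)r_k/2}$. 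Combining, the per-$(i,k)$ failure probability is at most $C\exp(-c_4 r_k)$ for some $c_4=c_4(\beta)>0$.

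Finally, I would union-bound over $(i,k)$. On the event from item (1), the number of indices $i\in(\overline{\eta}_{b_k},\overline{\eta}_{b_0}]_{\mathds{Z}}$ at step $k$ is at most $\sum_{l\leq k}2\mu_{b_{l-1},b_l}\leq 2\beta\log 2\cdot t_k$. Thus the total failure of item (2) is at most
\[
\sum_{k=1}^{K_n} 2\beta\log 2\cdot t_k\cdot C\exp(-c_4 r_k)\leq C''M^{1-c_4 L}\exp(-c_4 M),
\]
provided $c_4 L>2$. Choosing $L=L(\beta)$ large enough that $\min(c_2,c_4)L>2$ and then $M=M(\varepsilon)$ large enough makes the combined upper bound at most $\varepsilon/2$ for all $n>n_0(\varepsilon)$. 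The main obstacle is justifying rigorously the near-independence across scales for conditions (2) and (3): while the marginal quantile bound and the disjointness of $I^\pm_{i,j}$ across $j$ point to the right picture, a careful argument may require a Firework-type bound in the spirit of Lemma \ref{induct-w-lem} applied directly to the joint failure event across scales, rather than a naive product of marginal probabilities.
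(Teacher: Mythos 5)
Your proposal follows essentially the same strategy as the paper's proof: split $\mathds{P}[E_{\alpha,n}^c]$ into the failure of Definition \ref{def-E}(1) (handled by a Chernoff bound as in Lemma \ref{Poitail-lem}) and the failure of Definition \ref{def-E}(2) (handled by conditioning on $\mathcal{E}_n$, invoking Proposition \ref{ld-xi} to produce $\ge (1-\kappa)(b_{k-1}-b_k)/2$ scales at which Definition \ref{def-vg}(1) holds, and then bounding the conditional chance that conditions (2) and (3) fail at all of those scales), followed by a union bound over $(k,i)$ as in \eqref{PEc}. Your intermediate bound $\sum_{l\le k}2\mu_{b_{l-1},b_l}\le 2\beta\log 2\cdot (n-b_k)$ is a marginally cleaner version of the paper's \eqref{sum-mu}, and the parameter choices (first $L$ large depending only on $\beta$, then $M$ large depending on $\varepsilon$) are the same as \eqref{cond-L} and \eqref{E-M}.

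The one place where your instinct to be cautious is sound is the cross-scale treatment of conditions (2) and (3) of Definition \ref{def-vg}, which you flag at the very end. The paper's Lemma \ref{pro-ki} performs exactly the product estimate $\mathds{P}[A_{k,i}\cap E_{n,k,i}\mid\mathcal{E}_n]\le\bigl(1-(1-\alpha)^2\bigr)^{(1-\kappa)(b_{k-1}-b_k)/2}$ that you describe as a ``naive product of marginal probabilities,'' while only recording explicitly that, given $\mathcal{E}_n$, items (1), (2), (3) are independent \emph{for a fixed $j$}. Across scales the situation is less clean than the disjointness of $I^{\pm}_{i,j}$ might suggest: $\widehat{R}([z_i-2^{j+1},z_i+2^j],(z_i+2^{j+1},\infty))$ depends on every edge with an endpoint in $I^{+}_{i,j}$, including edges from $I^{+}_{i,j}$ to $I^{+}_{i,j'}$ for $j'>j$, and these same edges also influence the resistance at scale $j'$; since conditions (2), (3) are decreasing events in the edge configuration, FKG only gives the product inequality in the unhelpful direction for the failure event $E_{n,k,i}$. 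So the conditional independence across scales does need an extra argument --- e.g.\ observing that an edge between $I^{\pm}_{i,j}$ and $I^{\pm}_{i,j'}$ with $|j-j'|\ge 2$ forces Definition \ref{def-vg}(1) to fail at every intermediate scale, and then passing to a well-separated subfamily of the ``(1)-good'' scales. This, however, is a gap in the paper's own proof of Lemma \ref{pro-ki} just as much as in yours, rather than something you introduced; apart from that shared subtle point, your proposal matches the paper's argument step for step.
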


In what follows, we fix $\alpha\in (0,1/2]$. To simplify notation, we will use $E_{n}$ to represent $E_{\alpha,n}$.
 Since the probability of the event $E_{n}$ is challenging to estimate directly, we will decompose it further as follows.
\begin{definition}\label{def-Eki}
For each $k\in [1,K_n]_{\mathds{Z}}$ and each $i\in (\overline{\eta}_{b_k},\overline{\eta}_{b_0}]_{\mathds{Z}}$, let $E_{n,k,i}$ be the event that none of pairs of intervals $(I_{i,j}^-,I_{i,j}^+)$, $j\in (b_{k},b_{k-1}]_{\mathds{Z}}$ is $\alpha$-very good.
\end{definition}
Then from the definition of $E_n$ in Definition \ref{def-E}, we can see that
\begin{equation}\label{decomp-En}
\text{the event in Definition \ref{def-E} (2)}=\cap_{k=1}^{K_n}\cap_{i=\overline{\eta}_{b_k}+1}^{\overline{\eta}_{b_0}}E^c_{n,k,i}.
\end{equation}
We now  provide an upper bound for $\mathds{P}[E_{n,k,i}]$.

\begin{lemma}\label{pro-ki}
There exists a constant $c_1=c_1(\beta)>0$ depending only on $\beta$ such that for each $k\in [1,K_n]_{\mathds{Z}}$ and each $i\in (\overline{\eta}_{b_k},\overline{\eta}_{b_0}]_{\mathds{Z}}$, we have
$$
\mathds{P}[E_{n,k,i}]\leq \exp\{-c_1(b_{k-1}-b_k)\}.
$$
\end{lemma}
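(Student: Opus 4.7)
My plan is to establish, for the $\alpha$-very good indicator $\tilde\xi_j := \mathds{1}[(I_{i,j}^-, I_{i,j}^+) \text{ is } \alpha\text{-very good}]$, an analog of Lemma~\ref{induct-w-lem} asserting that for any ascending $\{j_1, \ldots, j_r\} \subset (b_k, b_{k-1}]_{\mathds{Z}}$,
\begin{equation}\label{plan-key}
\mathds{P}\bigl[\tilde\xi_{j_1} = \cdots = \tilde\xi_{j_r} = 0 \,\big|\, \mathcal{E}_n\bigr] \leq \tilde\kappa^r
\end{equation}
for some $\tilde\kappa = \tilde\kappa(\beta) \in (0, 1)$. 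Granted \eqref{plan-key}, the Chernoff-type argument deducing Proposition~\ref{ld-xi} from Lemma~\ref{induct-w-lem} (via \cite[Theorem~1]{RV10}) applies verbatim to the conditional sequence $\{\tilde\xi_j\}$ and yields the exponential bound on $\mathds{P}[E_{n,k,i} \mid \mathcal{E}_n]$; averaging over $\mathcal{E}_n$ then finishes the proof of the lemma.

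To prove \eqref{plan-key} I would decompose $\tilde\xi_j = \xi_j^{(1)} \xi_j^{(2)} \xi_j^{(3)}$ into the three clauses of Definition~\ref{def-vg}, and condition on the profile of $\xi^{(1)}$. By \eqref{prob-1}, clause~(1) is weaker than the good condition of Definition~\ref{def-g}, so Lemma~\ref{induct-w-lem} gives $\mathds{P}[\xi_{j_\ell}^{(1)} = 0 \text{ for all } \ell \in A \mid \mathcal{E}_n] \leq \kappa^{|A|}$ for every $A \subset \{1, \ldots, r\}$ (the conditioning on $\mathcal{E}_n$ being essentially harmless since the edge sets determining $\xi_j^{(1)}$ are almost disjoint from those in $\mathcal{E}_n$). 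A binomial expansion then yields, with $T := |\{\ell: \xi_{j_\ell}^{(1)} = 0\}|$,
\begin{equation*}
\mathds{E}[x^T \mid \mathcal{E}_n] \leq (1 + (x-1)\kappa)^r \qquad (x \geq 1).
\end{equation*}
At each index $\ell$ with $\xi_{j_\ell}^{(1)} = 1$, the conditional independence of clauses (1), (2), (3) at the same scale combined with \eqref{prob-2} gives $\mathds{P}[\xi_{j_\ell}^{(2)} \xi_{j_\ell}^{(3)} = 0 \mid \xi_{j_\ell}^{(1)} = 1, \mathcal{E}_n] \leq 1 - (1-\alpha)^2$. Assuming the product-type bound
\begin{equation}\label{plan-product}
\mathds{P}\bigl[\xi_{j_\ell}^{(2)} \xi_{j_\ell}^{(3)} = 0 \text{ for all } \ell \in S \,\big|\, \mathcal{E}_n, \xi^{(1)}\bigr] \leq \bigl(1 - (1-\alpha)^2\bigr)^{|S|}, \quad S := \{\ell: \xi_{j_\ell}^{(1)} = 1\},
\end{equation}
and inserting $x = (1-(1-\alpha)^2)^{-1}$ into the MGF bound would give
\begin{equation*}
\mathds{P}[\tilde\xi_{j_\ell} = 0,\ \forall \ell \mid \mathcal{E}_n] \leq \bigl(1 - (1-\alpha)^2(1-\kappa)\bigr)^r.
\end{equation*}
Since $\alpha \leq 1/2$ implies $(1-\alpha)^2 \geq 1/4$, we may take $\tilde\kappa := 1 - (1-\alpha)^2(1-\kappa) \leq 1 - (1-\kappa)/4$, which is bounded away from $1$ by a constant depending only on $\beta$.

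The main obstacle is justifying the product-type bound \eqref{plan-product}. Indeed, $\xi_j^{(2)}, \xi_j^{(3)}$ at distinct scales share edges --- in particular edges joining $I_{i,j}^+$ and $I_{i,j'}^+$ (with $j \neq j'$) enter the resistance $\widehat{R}$ at both scales --- so strict independence across scales fails. I would handle this by revealing scales in decreasing order of $j$: at the $\ell$-th step I would condition on all edges already revealed at larger scales $j_{\ell'}$, and then exploit the locality of $\widehat{R}$ stated in the paper together with the product Bernoulli structure of the still-unrevealed incident edges to argue that, conditional on the past and on $\xi_{j_\ell}^{(1)} = 1$, the probability of $\xi_{j_\ell}^{(2)} \xi_{j_\ell}^{(3)} = 1$ remains bounded below by $(1-\alpha)^2$; iterating over $\ell \in S$ then gives \eqref{plan-product}. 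The preservation of this conditional lower bound as information accumulates, and in particular the careful accounting of edges shared between $I_{i,j_\ell}^\pm$ and the already-processed intervals, is the delicate step of the argument.
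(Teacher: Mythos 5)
Your overall route is the same as the paper's: control the number of scales satisfying clause~(1) of Definition~\ref{def-vg} through the Firework machinery (Lemma~\ref{induct-w-lem} / Proposition~\ref{ld-xi}), then combine with the conditional probability bound \eqref{prob-2} for clauses~(2) and~(3). Where the paper introduces the threshold event $A_{k,i}$ (``at least $(1-\kappa)(b_{k-1}-b_k)/2$ scales satisfy clause~(1)''), bounds $\mathds{P}[A_{k,i}^c]$ by Proposition~\ref{ld-xi}, and then bounds $\mathds{P}[A_{k,i}\cap E_{n,k,i}\mid\mathcal{E}_n]$ by $(1-(1-\alpha)^2)^{(1-\kappa)(b_{k-1}-b_k)/2}$, you package the same computation as an MGF bound on $T$ and evaluate at a suitable point; these are interchangeable and give the same exponential estimate.

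The genuine issue is the product-type bound \eqref{plan-product}, which you are right to flag. The paper's proof treats it as an immediate consequence of the displayed sentence ``given $\mathcal{E}_n$, the events in Definition~\ref{def-vg} (1), (2) and (3) are independent,'' but that sentence concerns a single fixed scale $j$, while the factor $(1-(1-\alpha)^2)^{(1-\kappa)(b_{k-1}-b_k)/2}$ is a product over all the scales in $S$ and therefore requires exactly the cross-scale independence you question. (Within a fixed scale, the positive association of the decreasing events in clauses~(2) and~(3) via FKG still gives $\mathds{P}[(2)\cap(3)]\ge(1-\alpha)^2$, so the within-scale sharing of edges between $I_{i,j}^-$ and $I_{i,j}^+$ is harmless; but FKG goes the \emph{wrong} way for the cross-scale product you need, since the failure events at different scales are increasing and hence positively, not negatively, correlated.) Your proposed fix---revealing scales in decreasing $j$ and claiming the conditional probability stays $\ge(1-\alpha)^2$---does not obviously work: the already-revealed edges joining $I_{i,j_\ell}^\pm$ to a previously processed $I_{i,j_{\ell'}}^\pm$ are shortcut edges whose presence \emph{lowers} the resistance $\widehat R$ at scale $j_\ell$, so conditioning on them can push the conditional probability of clauses~(2) and~(3) below $1-\alpha$; the ``product Bernoulli structure of the unrevealed edges'' alone does not recover it. In short, you have correctly identified the step that the paper's written proof elides, but your outlined repair leaves it open; a complete argument would need a quantitative control of the cross-scale shared edges (e.g.\ via their small expected number or a separation of the scales used), which neither your outline nor the paper's proof of this lemma supplies.
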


\begin{proof}
For each $k\in [1,K_n]_{\mathds{Z}}$ and each $i\in (\overline{\eta}_{b_k},\overline{\eta}_{b_0}]_{\mathds{Z}}$, we let $A_{k,i}$ be the event that there exist at least $(1-\kappa)(b_{k-1}-b_k)/2$ pairs of intervals in $\{(I_{i,j}^-,I_{i,j}^+):\ j\in (b_k,b_{k-1}]_{\mathds{Z}}\}$ such that the event in Definition \ref{def-vg} (1) occurs. Here $\kappa$ is the constant in Proposition \ref{ld-xi}, depending only on $\beta$.
Note that the event $A_{k,i}$ is determined by the edge set $\mathcal{E}_{(0,+\infty)^2}$ and the position of $z_i$ from the Definition \ref{def-vg} (1). Moreover, the position of $z_i$ is determined by $\mathcal{E}_n=\mathcal{E}_{(-\infty,0]\times (0,N]}$, which is independent of  $\mathcal{E}_{(0,+\infty)^2}$.
Thus, from Proposition \ref{ld-xi} and \eqref{prob-1} we get that
\begin{equation*}
\mathds{P}[A_{k,i}^c|\mathcal{E}_n]\leq \exp\{-(1-\kappa)^2(b_{k-1}-b_k)/2\}.
\end{equation*}
By taking expectations on both sides of the above inequality, we obtain
\begin{equation}\label{prob-A}
\mathds{P}[A_{k,i}^c]\leq \exp\{-(1-\kappa)^2(b_{k-1}-b_k)/2\}.
\end{equation}

Additionally, recall that given $\mathcal{E}_n$, the events in (1), (2) and (3) of Definition \ref{def-vg} are independent. Therefore, combining this with \eqref{prob-A}, \eqref{prob-2} and $\alpha\in (0,1/2]$,  we obtain that there exists a constant $c_1=c_1(\beta)>0$ depending only on $\beta$ such that
\begin{align*}
\mathds{P}[E_{n,k,i}]&\leq \mathds{P}[A_{k,i}^c]+\mathds{P}[A_{k,i}\cap E_{n,k,i}]
= \mathds{P}[A_{k,i}^c]+\mathds{E}[\mathds{P}[A_{k,i}\cap E_{n,k,i}|\mathcal{E}_n]]\\
&\leq \exp\{-(1-\kappa)^2(b_{k-1}-b_k)/2\}+{(1-(1-\alpha)^2)}^{-(1-\kappa)(b_{k-1}-b_k)/2}\\
&\leq \exp\{-(1-\kappa)^2(b_{k-1}-b_k)/2\}+(3/4)^{-(b_{k-1}-b_k)/2}\\
&\leq \exp\{-c_1(\beta)(b_{k-1}-b_k)\}.
\end{align*}
Hence the proof is complete. \end{proof}

In addition, for each $k\in [1,K_n]_{\mathds{Z}}$, we denote $D_k$ for the event that $\overline{\eta}_{b_{k-1},b_k}>2 \mu_{b_{k-1},b_k}$, and denote $D=\cup_{k\geq 1}D_k$.
It is clear that $D^c$ is the event in Definition \ref{def-E} (1).

According to the Chernoff bound, we have the following estimate.

\begin{lemma}\label{Poitail-lem}
For each $k\geq 1$,
\begin{equation*}
\mathds{P}[D_k]\leq \exp\left\{-\mu_{b_{k-1},b_k}/3\right\}.
\end{equation*}
Therefore,
\begin{equation}\label{Poi-tail}
\mathds{P}[D]\leq \sum_{k\geq1}\exp \left\{-\mu_{b_{k-1},b_k}/3\right\}.
\end{equation}
\end{lemma}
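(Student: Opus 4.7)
The plan is to recognize that $\overline{\eta}_{b_{k-1},b_k}$ is a sum of independent Bernoulli random variables, and then invoke the standard multiplicative Chernoff bound with deviation parameter $\delta=1$.

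First I would unpack $\overline{\eta}_{b_{k-1},b_k}=\sum_{b_k<l\leq b_{k-1}}\eta_l$ into the more refined form
\begin{equation*}
\overline{\eta}_{b_{k-1},b_k}=\sum_{2^{b_k}<v\leq 2^{b_{k-1}}}\mathbf{1}_{B_v},\qquad B_v:=\left\{\exists\,u\leq 0\text{ such that }\langle u,v\rangle\in\mathcal{E}\right\}.
\end{equation*}
The events $\{B_v\}$ for distinct $v$ depend on disjoint edge sets (namely $\mathcal{E}_{(-\infty,0]\times\{v\}}$), so by the independence of edges in the $\beta$-LRP model they are jointly independent Bernoulli random variables. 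In particular $\overline{\eta}_{b_{k-1},b_k}$ is a sum of independent $\{0,1\}$-valued random variables with total expectation $\mu_{b_{k-1},b_k}$.

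Next I would apply the standard multiplicative Chernoff bound for sums $X$ of independent Bernoullis, which gives
\begin{equation*}
\mathds{P}\bigl[X\geq (1+\delta)\mathds{E}[X]\bigr]\leq \exp\!\Bigl(-\tfrac{\delta^{2}}{2+\delta}\,\mathds{E}[X]\Bigr),\qquad \delta>0.
\end{equation*}
Specialising to $\delta=1$ and $X=\overline{\eta}_{b_{k-1},b_k}$ yields immediately
\begin{equation*}
\mathds{P}[D_k]=\mathds{P}\bigl[\overline{\eta}_{b_{k-1},b_k}>2\mu_{b_{k-1},b_k}\bigr]\leq \exp\bigl(-\mu_{b_{k-1},b_k}/3\bigr),
\end{equation*}
which is the first assertion.

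Finally, \eqref{Poi-tail} follows by a straightforward union bound: $D=\cup_{k\geq 1}D_k$, so
\begin{equation*}
\mathds{P}[D]\leq\sum_{k\geq 1}\mathds{P}[D_k]\leq\sum_{k\geq 1}\exp\bigl(-\mu_{b_{k-1},b_k}/3\bigr).
\end{equation*}
There is no real obstacle here; the only minor point to be careful about is verifying the independence of the indicators $\mathbf{1}_{B_v}$ across $v$, which is immediate since the edges involved in $B_v$ and $B_{v'}$ lie in disjoint product sets $(-\infty,0]\times\{v\}$ and $(-\infty,0]\times\{v'\}$ for $v\neq v'$. The finiteness of the right-hand side of \eqref{Poi-tail} (needed for the subsequent application) is not claimed in this lemma and will be ensured later by the lower bound $\mu_l\geq c\beta\log 2$ from \eqref{mu-i} together with the rate of growth of $b_{k-1}-b_k$ dictated by \eqref{def-bk}.
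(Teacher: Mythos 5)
Your proof is correct and takes essentially the same route as the paper: the paper simply invokes ``the Chernoff bound'' for $\overline{\eta}_{b_{k-1},b_k}$ and then sums over $k$, while you fill in the (true but unstated) observation that $\overline{\eta}_{b_{k-1},b_k}$ is a sum of independent Bernoulli indicators $\mathbf{1}_{B_v}$ over disjoint edge sets, which is exactly what makes Chernoff applicable. No difference of substance.
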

\begin{proof}
For each $k\in [1,K_n]_{\mathds{Z}}$, by the Chernoff bound and definitions of $\overline{\eta}_{b_{k-1},b_k}$ and $\mu_{b_{k-1},b_k}$ in \eqref{eta-bk}, we get that
$$
\mathds{P}[D_k]=\mathds{P}[\overline{\eta}_{b_{k-1},b_k}>2\mu_{b_{k-1},b_k}]\leq \e^{-\frac{\mu_{b_{k-1},b_k}}{3}}.
$$
By taking the summation over $k$ on both sides of the above inequality, we obtain \eqref{Poi-tail}.
\end{proof}

To prove Proposition \ref{Prob-vg}, we also need  some asymptotic properties of the sequence $\{b_k\}_{k\geq 0}$. To this end, we define $d_k=n-b_k$ for $k\geq 0$. That is,
\begin{equation}\label{def-ck}
d_0=0\quad \text{and}\quad d_k=d_{k-1}+M+L\log(M+d_{k-1})\quad \text{for }k\geq 1.
\end{equation}

\begin{lemma}\label{ck-order-lem}
Let $L>2$ and $M>2$. For the sequence $\{d_k\}_{k\geq 0}$, we have
\begin{equation}\label{ck-order}
kM+L\sum_{i=1}^k\log i\leq d_k\leq kM+2L\sum_{i=1}^k \log(LMi).
\end{equation}
\end{lemma}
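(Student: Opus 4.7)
The plan is to prove both bounds by induction on $k$ directly from the recursion $d_k = d_{k-1} + M + L\log(M+d_{k-1})$. The base case $k=0$ is immediate since both sides of each inequality vanish.

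For the lower bound, assume inductively that $d_{k-1} \geq (k-1)M + L\sum_{i=1}^{k-1}\log i$. Plugging into the recursion, the inductive step reduces to showing $L\log(M+d_{k-1}) \geq L\log k$, i.e., $M + d_{k-1} \geq k$. This will follow immediately from the hypothesis together with $M > 2$: indeed $d_{k-1} \geq (k-1)M \geq k-1$, so $M + d_{k-1} \geq M + k - 1 \geq k$.

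For the upper bound, after checking the base case $k=1$ (where $d_1 = M + L\log M \leq M + 2L\log(LM)$ since $L > 1$), the inductive step will reduce to verifying $L\log(M+d_{k-1}) \leq 2L\log(LMk)$, i.e., $M + d_{k-1} \leq (LMk)^2$. Using the inductive hypothesis and the monotonicity of $\log$ to bound $\sum_{i=1}^{k-1}\log(LMi) \leq (k-1)\log(LM(k-1))$, we will obtain
\[
M + d_{k-1} \leq kM + 2Lk\log(LMk).
\]
Setting $x = LMk \geq 4$ (which holds since $L, M > 2$ and $k \geq 1$), the right-hand side equals $x\bigl(1/L + (2/M)\log x\bigr) \leq x(1 + \log x) \leq x \cdot x = x^2$, where the last inequality uses the standard bound $\log x \leq x - 1$ for $x > 0$.

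There is no substantive obstacle; the lemma is essentially book-keeping. The only conceptual point worth noting is that the factor $2$ multiplying $L$ on the upper side is necessary: each iteration introduces an additive $\log L + \log M$ contribution via $\log(LMk) = \log L + \log M + \log k$, and the extra factor provides exactly the slack needed to absorb it.
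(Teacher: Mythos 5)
Your proposal is correct and takes essentially the same route as the paper: induction on $k$, with the lower bound reducing to $M+d_{k-1}\geq k$ and the upper bound reducing to $M+d_{k-1}\leq (LMk)^2$, then verifying the latter via the crude bound $\sum_{i=1}^{k-1}\log(LMi)\leq (k-1)\log(LMk)$. The only cosmetic difference is that you package the final verification via the substitution $x=LMk$ and the elementary bound $\log x\leq x-1$, whereas the paper checks the inequality $L^2M^2(k+1)^2>(k+1)M+2Lk\log(LMk)$ directly; the underlying argument is the same.
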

\begin{proof}
From \eqref{def-ck}, we can see that $\{d_k\}_{k\geq 0}$ is increasing and is just determined by $L$ and $M$.
We will establish \eqref{ck-order} by  an induction on $k$.

Since $d_0=0$ and $d_1=M+L\log M$, it is clear that \eqref{ck-order} holds when $k=0,1$.
Now assume that \eqref{ck-order} holds for some $k\geq 1$. For the lower bound, we get that
\begin{equation*}
	\begin{aligned}
		d_{k+1}&=d_k+M+L\log(M+d_k)\\
		&\geq (k+1)M+L\sum_{i=1}^k\log i+ L\log\left((k+1)M+\sum_{i=1}^k \log i\right)\\
		&\geq (k+1)M+L\sum_{i=1}^{k+1}\log i,
	\end{aligned}
\end{equation*}
which implies that the lower bound in \eqref{ck-order} holds for $k+1$.

We now turn to the upper bound. Since \eqref{ck-order} holds for $k$, we get that
\begin{equation}\label{logck-2}
	\begin{aligned}
		\log(M+d_k)&\leq \log\left((k+1)M+2L\sum_{i=1}^k \log(LMi)\right)\\
		&\leq \log\bigg((k+1)M+2Lk \log(LMk)\bigg)\\
		&=2\log\left(LM(k+1)\right)+\log\frac{(k+1)M+2Lk \log(LMk)}{L^2M^2(k+1)^2}\\
		&\leq 2\log\left(LM(k+1)\right).
	\end{aligned}
\end{equation}
Here the last inequality holds because $L^2M^2(k+1)^2>(k+1)M+2Lk\log(LMk)$ when $k\geq 1,\ L>2$ and $M>2$. Then applying \eqref{logck-2} to \eqref{def-ck}, we get that
\begin{equation*}
	\begin{aligned}
	d_{k+1}&=d_k+M+L\log(M+d_k)\\
	&\leq (k+1)M+2L\sum_{i=1}^k \log (LMi)+2L\log\left(LM(k+1)\right)\\
	&= (k+1)M+2L\sum_{i=1}^{k+1} \log (LMi).
	\end{aligned}
\end{equation*}
Consequently, \eqref{ck-order} holds for all $k\geq 0$ by induction.
\end{proof}

From \eqref{def-ck} and Lemma \ref{ck-order-lem} we obtain that
\begin{equation}\label{bk-1-bk}
b_{k-1}-b_k=M+L\log(M+d_{k-1})\in\left[M+L\log k,\ M+2L\log(LMk)\right].
\end{equation}

With the above lemmas at hand, we can present the

\begin{proof}[Proof of Proposition \ref{Prob-vg}]
From \eqref{decomp-En}, \eqref{Poi-tail} and Lemma \ref{pro-ki}, we arrive at
\begin{equation}\label{PEc}
\begin{aligned}
\mathds{P}[E_n^c]&\leq \mathds{P}\left[D\cup \left(\cup_{k=1}^{K_n}\cup_{i=\bar{\eta}_{b_k}+1}^{\overline{\eta}_{b_0}}E_{n,k,i}\right)\right]
=\mathds{P}[D]+\mathds{P}\left[D^c\cap \left(\cup_{k=1}^{K_n}\cup_{i=\overline{\eta}_{b_k}+1}^{\overline{\eta}_{b_0}}E_{n,k,i}\right)\right]\\
&\leq  \sum_{k\geq1}\exp\{-\mu_{b_{k-1},b_k}/3\} +\sum_{k\geq 1} \left(2\sum_{i=1}^{k}\mu_{b_{i-1},b_i}\right)\exp\{-c_1(b_{k-1}-b_k)\}.\\
\end{aligned}
\end{equation}
In addition, it follows from \eqref{bk-1-bk} and \eqref{mu-i} that there exist two constants $c_2,c_3>0$ (depending only on $\beta$) such that for each $i\geq 1$,
\begin{equation}\label{mui}
 c_2(M+L\log i)\leq \mu_{b_{i-1},b_i}=\sum_{b_i<l\leq b_{i-1}}\mu_l\leq c_3(M+L\log(LMi)).
\end{equation}
This implies
\begin{equation}\label{sum-mu}
2\sum_{i=1}^{k}\mu_{b_{i-1},b_i}\leq c_4ML(\log L) k\log k
\end{equation}
for some constant $c_4>0$ depending only on $\beta$.
Applying \eqref{bk-1-bk}, \eqref{mui} and \eqref{sum-mu} to \eqref{PEc}, we conclude that
\begin{align*}
\mathds{P}[E_n^c]&\leq \e^{-c_2M}\sum_{k\geq 1}\e^{-c_2L\log k}+c_5ML(\log L)\e^{-c_6M}\sum_{k\geq 1}k\log k\e^{-c_1L\log k},
\end{align*}
where $c_5,c_6>0$ are some constants depending only on $\beta$. We now take $L$ large enough such that
\begin{equation}\label{cond-L}
L\geq \max\left\{2,3c_1^{-1},3c_2^{-1}\right\}.
\end{equation}
It is worth emphasizing that constants $c_1$ and $c_2$ depend only on $\beta$, meaning that $L$ also depends only on $\beta$. Then we get
\begin{equation}\label{E-M}
\mathds{P}[E_n^c]\leq c_7\e^{-c_8M}
\end{equation}
for some constants $c_7,c_8>0$ depending only on $\beta$. Hence we complete the proof by taking $M\geq c_8^{-1}\log(2c_7/\varepsilon)$ and sufficiently large $n$.
\end{proof}

\subsection{A recursive formula of the $(1-\alpha)$-quantile $a_{\alpha,n}$}\label{Rn-sect}

Recall that for $\alpha\in(0,1/2]$ and $n\geq 1$, $\widehat{R}_n$ represents the effective resistance defined in \eqref{def-Rn-2}, and $a_{\alpha,n}$ defined in \eqref{def-epsi} represents the $(1-\alpha)$-quantile of $\widehat{R}_n$.
The following proposition is the main output of this subsection,
which gives a recursive relation for the sequence $\{a_{\alpha, i}\}_{i\geq 1}$.

\begin{proposition}\label{an-thm}
For any $\beta>0$, there exists a constant $c>0$ depending only on $\beta$ such that the following holds for all $\alpha\in(0,1/2]$.
There exist $L>0$ large enough {\rm(}depending only on $\beta${\rm)}, $M'=M'(\alpha)>0$ and $n'_0=n'_0(\alpha)>0$ {\rm(}depending only on $\beta$ and $\alpha${\rm)} such that for all $n>n'_0$,
\begin{equation*}\label{an-lb}
a_{\alpha, n}\geq c\sum_{{ k=2}}^{K_n}\frac{\min\{a_{\alpha,b_{k}+1},\cdots, a_{\alpha,b_{k-1}}\}}{d_k},
\end{equation*}
where $d_k=n-b_k$ is defined in \eqref{def-ck}.
\end{proposition}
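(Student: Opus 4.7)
\emph{Proof proposal.}
The plan is to derive the proposition by combining the probabilistic event estimate of Proposition \ref{Prob-vg} with a deterministic energy lower bound (to be formalised as Lemma \ref{induct-thm}), and then translating the deterministic bound into a quantile statement through the definition \eqref{def-epsi}. First, apply Proposition \ref{Prob-vg} with $\varepsilon := \alpha$ to fix $L>0$ depending only on $\beta$, together with $M':=M(\alpha)$ and $n'_0 := n_0(\alpha)$, so that $\mathds{P}[E_{\alpha,n}] \geq 1-\alpha/2$ for every $n>n'_0$. Next, introduce the auxiliary event $F_{\alpha,n}$ of the forthcoming Lemma \ref{z-eta}, engineered to ensure that the rightmost inflow point $z_{\overline{\eta}}$ lies at distance at least $2^{b_1+1}$ from $N$; this condition guarantees that for all $k\geq 2$ and all $i\in(\overline{\eta}_{b_k},\overline{\eta}_{b_0}]_{\mathds{Z}}$, any $\alpha$-very good pair at a scale $j(k,i)\in (b_k,b_{k-1}]_{\mathds{Z}}$ actually separates $z_i$ from the entire region $(N,+\infty)$, not merely from the bounded ball $\{|x-z_i|>2^{j+1}\}$. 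A first-moment/Chernoff estimate in the style of Lemma \ref{Poitail-lem}, using the expectation bound $\mathds{E}[\overline{\eta}]\leq \beta n\log 2$, then gives $\mathds{P}[F_{\alpha,n}] \geq 1-\alpha/2$ for $n$ large enough (possibly after enlarging $n'_0$).

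On the event $E_{\alpha,n}\cap F_{\alpha,n}$, which has probability at least $1-\alpha$ by a union bound, I invoke Lemma \ref{induct-thm} to obtain the deterministic lower bound
\begin{equation*}
\widehat{R}_n \;\geq\; c \sum_{k=2}^{K_n} \frac{\min\{a_{\alpha,b_k+1},\ldots,a_{\alpha,b_{k-1}}\}}{d_k},
\end{equation*}
with $c>0$ depending only on $\beta$. The definition \eqref{def-epsi} of the $(1-\alpha)$-quantile then immediately upgrades this to $a_{\alpha,n} \geq c\sum_{k=2}^{K_n}m_k/d_k$, which is the desired recursion.

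The real work is the deterministic estimate in Lemma \ref{induct-thm}. Given an arbitrary unit flow $f$ admissible for $\widehat{R}_n$, with inflow amounts $\theta_i$ at $z_i$, the strategy is to accumulate energy contributions from each $\alpha$-very good pair selected by Definition \ref{def-E}(2). For every step $k\geq 2$ and every $i>\overline{\eta}_{b_k}$, the chosen pair is a genuine cut separating $z_i$ from $(N,+\infty)$ by Definition \ref{def-vg}(1) together with $F_{\alpha,n}$, and the effective resistance across this cut is at least $a_{\alpha,j(k,i)}\geq m_k$ by Definitions \ref{def-vg}(2)-(3). Summing the contributions at a fixed $k$ and invoking Cauchy--Schwarz with the count estimate $\overline{\eta}_{b_0}-\overline{\eta}_{b_k}\leq 2d_k$ from Definition \ref{def-E}(1) yields a step-$k$ energy of order $m_k/d_k$, modulo the factor $\bigl(\sum_{i>\overline{\eta}_{b_k}}\theta_i\bigr)^2$ of \emph{flow mass served by step $k$}. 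The genuine difficulty — and the likely reason the sum in the recursion starts at $k=2$ rather than $k=1$ — is that a unit flow can concentrate its mass on inflow points of small index $i\leq \overline{\eta}_{b_k}$, degenerating the step-$k$ bound; this is remedied by an iteration observing that such mass must still traverse $(2^{b_k},N]$ en route to $(N,+\infty)$, thereby activating cuts at finer scales $b_{k'}$ with $k'>k$. A careful telescoping across $k$, with $L$ large enough (already fixed via Proposition \ref{Prob-vg}) and $M'$ chosen to absorb the Cauchy--Schwarz constants, should yield the claimed deterministic bound.
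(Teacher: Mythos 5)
Your proposal for Proposition \ref{an-thm} follows the same route as the paper: apply Proposition \ref{Prob-vg} with $\varepsilon=\alpha$ to get $\mathds{P}[E_{\alpha,n}]\geq 1-\alpha/2$, invoke Lemma \ref{z-eta} for $F_{\alpha,n}$, apply the deterministic bound (Lemma \ref{induct-thm}) on the intersection, and read off the quantile. Two details, however, deserve a correction.

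First, the reason the recursion starts at $k=2$ is \emph{not} the flow-concentration issue you describe; that concern (mass piling up on low-index inflow points $z_i$ with $i\leq\overline{\eta}_{b_k}$) is already resolved uniformly in $k$ by Lemma \ref{merge}, which routes all of $\theta_0,\ldots,\theta_{\overline{\eta}_{b_k}}$ through the very good pair near $z_{i_k}$, yielding the factor $1/2$ in Lemma \ref{lb-k}. The true reason is purely geometric: Lemma \ref{induct-thm} yields a sum starting at $k=k_0$, where $k_0$ (see \eqref{k0}) is precisely the smallest $k$ for which the cut-interval $I^+_{\overline{\eta},b_{k-1}}$ is guaranteed to lie inside $(0,2^n]$ on $F_{\alpha,n}$. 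One always has $k_0\geq 2$, because at $k=1$ the scale $j$ can be as large as $b_0=n$ and then $I^+_{i,n}$ would exit $(0,2^n]$ for any $z_i>0$. To ensure $k_0=2$ exactly (so the sum really starts at $k=2$), the paper takes $M'$ large enough that $d_1-1\geq\log_2(1/(1-c'))$; this requirement on $M'$ is an explicit step in the paper's proof of Proposition \ref{an-thm} that you should add rather than leave implicit in the phrase ``engineered to ensure.''

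Second, Lemma \ref{z-eta} is proved by a direct computation — $\mathds{P}[z_{\overline{\eta}}\leq c'2^n]$ is exactly the probability of no edge in $\mathcal{E}_{(-\infty,0]\times(\lfloor c'2^n\rfloor,2^n]}$, evaluated via the exponential formula — not by a Chernoff bound on $\overline{\eta}$. This does not affect the validity of your use of the lemma, but the proof mechanism you ascribe to it is inaccurate.
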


The proof of Proposition \ref{an-thm} will be presented at the end of this subsection. Before that, we make some preparations. Let us start by controlling the position of $z_{\overline{\eta}}$, which will help us determine which $j$ will satisfy that $I_{\overline{\eta},j}^+\subset (0,N]$.

\begin{lemma}\label{z-eta}
For any $\gamma\in (0,1)$, there exist constants $c'=c'(\gamma)\in (0,1)$ and $n_1=n_1(\gamma)>0$ {\rm(}both depending only on $\beta$ and $\gamma${\rm)} such that for all $n\geq n_1$,
$$
\mathds{P}\left[z_{\overline{\eta}}\leq c'2^{n}\right]\geq 1-\gamma/2.
$$
We will refer to the event $\{z_{\overline{\eta}}\leq c'2^{n}\}$ as $F_{\gamma,n}$.
\end{lemma}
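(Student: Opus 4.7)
The plan is to control $z_{\overline{\eta}}$ via a first-moment/union-bound argument on the existence of any long edge between $(-\infty,0]$ and the upper sliver $(c'N, N]$. By definition, $z_{\overline{\eta}}$ is the largest element of $Z$, so the event $\{z_{\overline{\eta}} > c' 2^n\}$ coincides with the event that there exists at least one edge $\langle u, v \rangle \in \mathcal{E}$ with $u \leq 0$ and $v \in (c' N, N]$. Hence it suffices to make the expected number of such edges smaller than $\gamma/2$.

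First, using $1 - \e^{-t} \leq t$ and the definition of the connection probability, I would bound
\begin{equation*}
\mathds{P}[z_{\overline{\eta}} > c' N] \;\leq\; \sum_{u \leq 0}\sum_{c'N < v \leq N}\!\bigl(1-\exp\{-\beta\!\int_u^{u+1}\!\!\int_v^{v+1}\!\!|x-y|^{-2}\,\d x\,\d y\}\bigr) \;\leq\; \beta \int_{-\infty}^{1}\!\!\int_{c'N}^{N+1} \frac{\d y\,\d x}{(y-x)^{2}}.
\end{equation*}
For $N$ large enough so that $c' N > 1$, the inner integral in $x$ evaluates to $1/(y-1)$, and then
\begin{equation*}
\int_{c'N}^{N+1} \frac{\d y}{y-1} \;=\; \log\!\frac{N}{c'N - 1} \;\longrightarrow\; \log(1/c') \quad \text{as } n \to \infty.
\end{equation*}
Thus for all sufficiently large $n$ (say $n \geq n_1(\gamma)$) the bound is at most $2\beta\log(1/c')$.

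Finally, I would choose $c' \in (0,1)$ close enough to $1$ so that $2\beta\log(1/c') \leq \gamma/2$; concretely any $c' \geq \exp\{-\gamma/(4\beta)\}$ works, and this $c'$ depends only on $\beta$ and $\gamma$. Combining everything yields $\mathds{P}[z_{\overline{\eta}} > c' 2^n] \leq \gamma/2$ for all $n \geq n_1$, which is exactly the complement of the claim. There is no real obstacle here; the only subtlety is just making sure the error term $\log(N/(c'N-1)) - \log(1/c')$ is absorbed by taking $n$ large (or by slightly adjusting $c'$), but this is routine. The reason the argument closes is that long edges with both endpoints far apart contribute a logarithmically small total expected count, and the upper sliver $(c'N, N]$ is small enough on the logarithmic scale to be controlled by choosing $c'$ near $1$.
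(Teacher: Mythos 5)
Your argument is correct, and it reaches the same conclusion by a slightly different route. The paper exploits independence of edges to compute the probability exactly, namely
$\mathds{P}[z_{\overline{\eta}}\leq c'2^{n}]=\exp\{-\beta\int_{-\infty}^1\int_{\lfloor c'2^n\rfloor+1}^{2^n+1}|x-y|^{-2}\,\d x\,\d y\}\geq (c'-2^{-n})^\beta$,
and then picks $c'$ so that $(c')^\beta$ is close to $1$. You instead use a first-moment union bound, $1-\e^{-t}\le t$, and the geometric-series/logarithmic estimate $\int_{-\infty}^1\int_{c'N}^{N+1}|x-y|^{-2}\approx \log(1/c')$, which yields $\mathds{P}[z_{\overline{\eta}}>c'N]\le 2\beta\log(1/c')$ for large $n$. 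Both are one-step calculations with no essential gap; the paper's exact product gives a marginally cleaner constant (it already lower-bounds the probability by $(c')^\beta$ with no need for the extra factor of $2$ to absorb discretization error), while your union bound is more elementary and avoids any appeal to the precise product form of the survival probability. Either way, $c'$ depends only on $\beta$ and $\gamma$ and the threshold $n_1$ is needed only to control the $O(1/N)$ discretization, so the two proofs are interchangeable.
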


\begin{proof}
By the definitions  of $z_{\overline{\eta}}$ and $\xi_{uv}$ in \eqref{xi-uv}, we can see that
\begin{equation*}
\begin{aligned}
\mathds{P}\left[z_{\overline{\eta}}\leq c'2^{n}\right]&=\mathds{P}\left[\sum_{u=-\infty}^0\sum_{v=\lfloor c'2^{n}\rfloor+1}^{2^n}\xi_{uv}=0\right]\\
&=\exp\left\{-\beta\int_{-\infty}^1\int_{\lfloor c'2^{n}\rfloor+1}^{2^n+1}\frac{1}{|x-y|^2}\d x\d y\right\}\geq (c'-2^{-n})^\beta.
\end{aligned}
\end{equation*}
Hence, for fixed $\gamma\in (0,1)$, we can complete the proof by taking $c'\geq (1+(1-\gamma/2)^{1/\beta})/2$ and $n_1=\log(1/(1-(1-\gamma/2)^{1/\beta}/2))$.
\end{proof}

Define
\begin{equation}\label{k0}
\begin{aligned}
k_0=k_0(\gamma)&=\inf\left\{k\geq 1:\ b_{k-1}+1\leq n-\log_2(1/(1-c'))\right\}\\
&=\inf\left\{k\geq 1:\  d_{k-1}-1\geq  \log_2(1/(1-c'))\right\},
\end{aligned}
\end{equation}
where $c'=c'(\gamma)$ is the constant in Lemma \ref{z-eta}.
It can be observed that $k_0$ depends only on $\beta$ and $\gamma$, and the definition of it ensures that on the event $F_{\gamma, n}$ we have $$I_{\overline{\eta},b_{k_0-1}}^+=(z_{\overline{\eta}}+2^{b_{k_0-1}}, z_{\overline{\eta}}+2^{b_{k_0-1}+1}]\subset (0,2^n].$$

The key input of the proof of Proposition \ref{an-thm} is to show that on the event $E_{\alpha,n}\cap F_{\alpha,n}$, the effective resistance $\widehat{R}_n$ is bounded from below by $\{a_{\alpha,k}\}_{k\geq 1}$ as follows.

\begin{lemma}\label{induct-thm}
For any $\beta>0$, there exists a constant $c>0$ depending only on $\beta$ such that the following holds for all $\alpha\in(0,1/2]$. On the event $E_{\alpha, n}\cap F_{\alpha,n}$, we have
\begin{equation*}\label{Rn-lb}
\widehat{R}_n\geq c\sum_{k=k_0}^{K_n}\frac{\min\{a_{\alpha,b_{k}+1},\cdots, a_{\alpha,b_{k-1}}\}}{d_k},
\end{equation*}
where $K_n$ and $\{d_k\}_{k\geq 0}$ are defined in \eqref{def-Kn} and \eqref{def-ck}, respectively, and $k_0$ is the constant defined in \eqref{k0} with $\gamma=\alpha$.
\end{lemma}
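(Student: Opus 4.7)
The plan is to fix an arbitrary unit flow $f$ admissible in the definition \eqref{def-Rn-2} of $\widehat{R}_n$ and to lower-bound its Dirichlet energy $\tfrac12\sum_{u\sim v}f_{uv}^2$ by $c\sum_{k=k_0}^{K_n}\min\{a_{\alpha,b_k+1},\dots,a_{\alpha,b_{k-1}}\}/d_k$. On $E_{\alpha,n}$, for every pair $(k,i)$ with $k\in[k_0,K_n]_{\mathds{Z}}$ and $i\in(\overline{\eta}_{b_k},\overline{\eta}_{b_0}]_{\mathds{Z}}$ I will fix an $\alpha$-very good pair $(I^-_{i,j_{k,i}},I^+_{i,j_{k,i}})$ with $j_{k,i}\in(b_k,b_{k-1}]_{\mathds{Z}}$ as supplied by Definition \ref{def-E}(2). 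Combining $F_{\alpha,n}$ with the choice of $k_0$ in \eqref{k0} guarantees $I^{\pm}_{i,j_{k,i}}\subset(0,N]$ for every such $(k,i)$, so the resistance estimates in Definition \ref{def-vg}(2)--(3) can be applied without boundary complications.

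Next I would promote each very good pair into a genuine cut. Condition (1) of Definition \ref{def-vg} rules out every long edge from $[(z_i-2^{j_{k,i}+1})\vee 0,z_i+2^{j_{k,i}}]$ to $(z_i+2^{j_{k,i}+1},+\infty)$ and symmetrically on the left; a short-edge check (the gap between those sets exceeds $1$) shows no nearest-neighbour edge can bypass $I^{\pm}_{i,j_{k,i}}$ either. Combined with the constraint $f_{uv}=0$ for $\langle u,v\rangle\in\mathcal{E}_{(-\infty,0]\times(N,+\infty)}$ built into $\widehat{R}_n$, this forces any portion of $f$ entering the inner ball of radius $2^{j_{k,i}}$ around $z_i$ to escape through $I^+_{i,j_{k,i}}$ or $I^-_{i,j_{k,i}}$. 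Let $\psi_{k,i}$ be the larger of the two net escape amounts; then conditions (2)--(3) of Definition \ref{def-vg} and Thomson's principle, applied to the restriction of $f$ to the sub-network around the chosen ring, yield a Dirichlet-energy contribution of at least $\psi_{k,i}^2\cdot a_{\alpha,j_{k,i}}\geq\psi_{k,i}^2\min_{j\in(b_k,b_{k-1}]}a_{\alpha,j}$. Rings at different scales are dyadically separated, and at a fixed scale one can thin the collection so that the chosen rings around different $z_i$'s are disjoint; these pruned contributions then add up without double-counting.

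Finally, to produce the factor $1/d_k$, I would aggregate over $i$ at fixed $k$. Part (1) of Definition \ref{def-E} together with \eqref{bk-1-bk} and \eqref{mu-i} gives $\overline{\eta}_{b_0}-\overline{\eta}_{b_k}=\sum_{\ell=1}^{k}\overline{\eta}_{b_{\ell-1},b_\ell}\leq 2\sum_{\ell=1}^{k}\mu_{b_{\ell-1},b_\ell}\leq Cd_k$ on $E_{\alpha,n}$. Flow conservation shows that the chosen cuts at scale $k$ collectively intercept a definite fraction of the unit flow, so $\sum_i\psi_{k,i}\geq c'>0$. Cauchy--Schwarz across at most $Cd_k$ terms then gives $\sum_i\psi_{k,i}^2\geq c''/d_k$, and summing over $k\in[k_0,K_n]_{\mathds{Z}}$ produces the claimed lower bound on $\widehat{R}_n$.

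The main difficulty lies in the cut/Thomson step. The flow $f$ may use edges between $(-\infty,0]$ and $(0,N]$ that are forbidden for the auxiliary flows defining $\widehat{R}(\cdot,\cdot)$ in Definition \ref{def-vg}(2)--(3), so turning $f$ into a legitimate competitor for those resistances requires a careful local truncation that preserves a definite fraction of $\psi_{k,i}$ at every scale simultaneously. This is precisely the ``simultaneous control of all unit flows'' issue flagged in Subsection \ref{outline} and, together with rigorously establishing $\sum_i\psi_{k,i}\geq c'$ and the disjointification at each scale, is where the real work lies.
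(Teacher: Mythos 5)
Your proposal follows the correct skeleton — use the very good pairs from $E_{\alpha,n}$ as cuts at each scale, bound the number of cuts at scale $k$ by $Cd_k$ via Definition~\ref{def-E}(1), apply Cauchy--Schwarz to get the $1/d_k$ factor, and sum over $k$ — but it has two genuine gaps that the paper's proof handles by specific devices you do not have.

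First, the claim that ``rings at different scales are dyadically separated'' is false, and the fallback of ``thinning at a fixed scale to get disjoint rings'' does not work. The paper's Lemma~\ref{i12} exists precisely because intervals $I^{\pm}_{i_1,j_{i_1}^{(k_1)}}$ and $I^{\pm}_{i_2,j_{i_2}^{(k_2)}}$ from different layers \emph{can} overlap; the lemma only asserts that overlapping intervals must have different base points $i_1\ne i_2$. If several $z_i$ cluster, thinning to a disjoint sub-collection at scale $k$ may discard nearly all of the rings, and then there is no hope of intercepting a definite fraction of the flow through what survives. The paper sidesteps disjointness entirely by decomposing the flow $f$ into per-$z_i$ flows $\theta_i$ via the path-decomposition algorithm of \cite{BDG20}, with the crucial unidirectionality property~\eqref{unidirect}; this gives $f_{uv}^2 = \bigl(\sum_i(\theta_i)_{uv}\bigr)^2 \geq \sum_i(\theta_i)_{uv}^2$, so energies attributed to different $\theta_i$ add up even on overlapping edges. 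Your proposal has no substitute for this.

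Second, your ``flow conservation shows $\sum_i\psi_{k,i}\geq c'$'' does not account for the flow entering at the small inflow points $z_i$ with $i\leq\overline{\eta}_{b_k}$, i.e.\ those inside $(0,2^{b_k+1}]$. At scale $k$ you only place rings around $z_i$ with $i\in(\overline{\eta}_{b_k},\overline{\eta}_{b_0}]$, and if most of the unit flow enters at the small $z_i$ it simply never hits any of your rings, so $\sum_i\psi_{k,i}$ could be arbitrarily small. The paper's Lemma~\ref{merge} is exactly the missing ingredient: on $E_{\alpha,n}\cap F_{\alpha,n}$, condition (1) of Definition~\ref{def-vg} makes $I^+_{i_k,j^{(k)}_{i_k}}$ a cut separating \emph{all} of $[0,z_{i_k}+2^{j^{(k)}_{i_k}}]\supset(0,2^{b_k+1}]$ from $(N,\infty)$, so the entire flow $\sum_{i\leq \overline{\eta}_{b_k}+1}|\theta_i|$ merges through that single interval. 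Only with that merge does one obtain the $\geq 1/2$ lower bound of Lemma~\ref{lb-k} that feeds the Cauchy--Schwarz step. You correctly flag the ``simultaneous control of all unit flows'' issue, but the thinning/flow-conservation argument as stated does not resolve it.
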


To prove Lemma \ref{induct-thm}, we recall that $Z$ is the set defined in \eqref{def-Z}.
Let $\theta$ denote a unit flow that satisfies conditions specified in \eqref{def-Rn-2}, i.e., $\theta$ is a unit flow from $(-\infty,0]$ to $(2^n,+\infty)$ with $\theta_{uv}=0$ for all $\langle u,v\rangle\in \mathcal{E}_{(-\infty,0]\times (2^n,+\infty)}$.
For notation convenience, for $i\in [0,\overline{\eta}]_{\mathds{Z}}$, let $\theta_i$ denote the portion of the flow $\theta$ that passes through the point $z_i$. Therefore, $\theta_i$ is a flow that enters the interval $(0, 2^n]$ starting from the point $z_i$ (see Figure \ref{flow}).
More precisely, we can decompose the flow $\theta$ into self-avoiding paths $P$ using the algorithm in \cite[Page 54]{BDG20}, and thus obtain a sequence of flows $\theta_P$. Then we can see that
$$
\theta_i=\sum_{P:z_i\in P}\theta_P.
$$
Here $z_i\in P$ means that $z_i$ lies on the path $P$. It can be observed from the algorithm that $\theta_1,\cdots, \theta_{\overline{\eta}}$ exhibit unidirectionality, i.e.
\begin{equation}\label{unidirect}
(\theta_i)_{uv}(\theta_j)_{uv}\geq 0\quad \text{for all $i,j\in [1,\overline{\eta}]_{\mathds{Z}}$ and }\langle u,v\rangle\in \mathcal{E},
\end{equation}
and
\begin{equation}\label{cond-theta}
\sum_{i=0}^{\overline{\eta}} |\theta_i|=1\quad \text{and}\quad  \sum_{i=0}^{\overline{\eta}}\theta_i=\theta.
\end{equation}
Here, $|\cdot|$ refers to the amount of the flow as defined in \eqref{size}.

\begin{figure}[htbp]
\centering
\includegraphics[height=1.2cm,width=15cm]{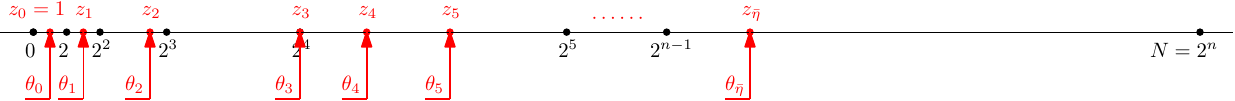}
\caption{The illustration for points $\{z_i\}_{i\geq 0}$ and flows $\{\theta_i\}_{i\geq 0}$. The red points correspond  to the set $Z=\{z_i\}_{i\geq 0}$, the collection of points through which flows enter $(0,2^n]$. The red arrows represent flows $\{\theta_i\}_{i\geq 0}$ entering at  points $\{z_i\}_{i\geq 0}$.}
\label{flow}
\end{figure}

 For each $k\in [k_0,K_n]_{\mathds{Z}}$, consider the point $z_{i_k}$, where $i_k:=\overline{\eta}_{b_k}+1$.
Indeed, by the definition of $\overline{\eta}_{b_k}$ in \eqref{def-etabar},  we have
$$
z_{i_k}=\inf\left\{z\in Z:\  z\in (2^{b_k+1},2^{b_k+2}]\right\}.
$$
We also define $j^{(k)}_{i_k}\in (b_k,b_{k-1}]$ as the number such that $(I_{i_k,j^{(k)}_{i_k}}^-,I_{i_k,j^{(k)}_{i_k}}^+)$ is an $\alpha$-very good pair of intervals. If there are multiple such numbers, we take $j^{(k)}_{i_k}$ to be the smallest one. It is worth emphasizing that $j^{(k)}_{i_k}$ is well defined on the event $E_{\alpha,n}$.
Additionally, for any flow $f$ and any interval $I\subset \mathds{Z}$, let us define $f(I)$ as the portion of the flow $f$ that passes through $I$. That is,
\begin{equation}\label{flow-int}
f(I)_{uv}=
\begin{cases}
f_{uv},\quad &\langle u,v\rangle\in \mathcal{E}_{I\times \mathds{Z}},\\
0,\quad &\text{otherwise}.
\end{cases}
\end{equation}
For $i\in [1,\overline{\eta}]_{\mathds{Z}}$, we also write $\theta_{\leq i}(I)$ for the portion of flows $\theta_1,\cdots, \theta_i$ that pass through $I$, i.e.,
\begin{equation}\label{flow-int-2}
(\theta_{\leq i}(I))_{uv}=
\begin{cases}
\sum_{m=1}^i(\theta_m)_{uv},\quad &\langle u,v\rangle\in \mathcal{E}_{I\times \mathds{Z}},\\
0,\quad &\text{otherwise}.
\end{cases}
\end{equation}

\begin{lemma}\label{merge}
Assume that the event $E_{\alpha,n}\cap F_{\alpha,n}$ occurs and recall $k_0$ is the constant defined in \eqref{k0} with $\gamma=\alpha$. Then for each $k\in [k_0,K_n]_{\mathds{Z}}$,
$$
|\theta_{\leq i_k}(I_{i_k,j^{(k)}_{i_k}}^+)|\geq |\theta_{i_k}|+\sum_{i=0}^{\overline{\eta}_{b_k}}|\theta_i|=\sum_{i=0}^{\overline{\eta}_{b_k}+1}|\theta_i|.
$$

\end{lemma}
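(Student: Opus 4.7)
The plan is to establish that the interval $I_{i_k,j}^+$ with $j := j^{(k)}_{i_k}$ functions as an (unavoidable) cut through which every sub-flow indexed by $m\le i_k$ must pass, and then sum contributions.

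First I would carry out the geometric bookkeeping. Since $j\in (b_k, b_{k-1}]$ we have $j\ge b_k+1$, so $2^{j+1}\ge 2^{b_k+2}\ge z_{i_k}$, giving $(z_{i_k}-2^{j+1})\vee 0 = 0$. From the definition of $\overline{\eta}_{b_k}$ the points $z_0,\dots,z_{\overline{\eta}_{b_k}}$ all lie in $[1,2^{b_k}]$, and $z_{i_k}\in(2^{b_k+1},2^{b_k+2}]$; hence $\{z_0,\dots,z_{i_k}\}\subset[0,z_{i_k}]\subset[0,z_{i_k}+2^j]$. On the event $F_{\alpha,n}$ together with $k\ge k_0$, the definition \eqref{k0} gives $z_{i_k}+2^{j+1}\le z_{\overline{\eta}}+2^{b_{k-1}+1}\le c'2^n+2^{b_{k-1}+1}\le 2^n$, so $I_{i_k,j}^+\subset(0,2^n]$ and $(2^n,+\infty)\subset (z_{i_k}+2^{j+1},+\infty)$.

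Next I would extract the cut property. Since $(z_{i_k}-2^{j+1})\vee 0 = 0$, the second clause of Definition \ref{def-vg}(1) becomes vacuous, while the first clause says there are no long edges between $[0,z_{i_k}+2^j]$ and $(z_{i_k}+2^{j+1},+\infty)$. Combined with the fact that $\theta$ satisfies $\theta_{uv}=0$ for every $\langle u,v\rangle\in\mathcal{E}_{(-\infty,0]\times(2^n,+\infty)}$ by the definition of $\widehat{R}_n$, this means that within the edges actually used by $\theta$, the set $L := (-\infty,z_{i_k}+2^j]$ can only be left towards $(2^n,+\infty)$ either through $I_{i_k,j}^+$ (via nearest-neighbour or permitted edges whose right endpoint is in $I_{i_k,j}^+$) or by first touching a point in $(z_{i_k}+2^{j+1},2^n]$, which forces an entry point $z_{m'}$ with $z_{m'}>z_{i_k}+2^{j+1}$, and hence $m'>i_k$.

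I would then carry out the path-level argument using the self-avoiding decomposition $\theta=\sum_P \theta_P$ from \cite[Page~54]{BDG20}, together with $\theta_i=\sum_{P:z_i\in P}\theta_P$ and the unidirectionality \eqref{unidirect}. For any path $P$ with $z_m\in P$ for some $m\le i_k$, the path starts in $(-\infty,0]$, visits $z_m\in L$, and terminates in $(2^n,+\infty)$; by the cut property, every segment of $P$ joining a vertex in $L$ to the terminal vertex in $(2^n,+\infty)$ must traverse at least one vertex of $I_{i_k,j}^+$, because any alternative route to $(z_{i_k}+2^{j+1},2^n]$ through $(-\infty,0]$ would require a re-entry at $z_{m'}$ with $m'>i_k$, and any re-entry inside $I_{i_k,j}^+$ itself already counts as a traversal. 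Summing $|\theta_P|$ with the multiplicity $w_P:=|\{m\le i_k:z_m\in P\}|$ inherited from $\theta_{\le i_k}=\sum_{m=1}^{i_k}\theta_m$, and using unidirectionality so that contributions add without cancellation on edges incident to $I_{i_k,j}^+$, one obtains
\[
|\theta_{\le i_k}(I_{i_k,j}^+)|\ge \sum_P w_P|\theta_P| = \sum_{m=0}^{i_k}|\theta_m| = |\theta_{i_k}|+\sum_{i=0}^{\overline{\eta}_{b_k}}|\theta_i|.
\]

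The main obstacle is the fourth step: ruling out genuine ``bypasses'' of $I_{i_k,j}^+$ via the permitted route $(-\infty,0]\to(z_{i_k}+2^{j+1},2^n]$. The argument must use both the self-avoiding nature of the paths in the decomposition (so each re-entry point is distinct) and the observation that a re-entry into $(z_{i_k}+2^{j+1},2^n]$ automatically associates the remaining path-weight with some $\theta_{m'}$ with $m'>i_k$, not inflating $\theta_{\le i_k}$, while any re-entry at a point in $I_{i_k,j}^+$ itself counts as traversing the cut. The unidirectionality property \eqref{unidirect} is invoked here to guarantee that the truncated flow $\theta_{\le i_k}(I_{i_k,j}^+)$ really captures the full multiplicity $w_P|\theta_P|$ on crossing edges.
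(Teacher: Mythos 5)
Your proposal follows the same route as the paper: you observe that $(z_{i_k}-2^{j+1})\vee 0=0$, so the first clause of Definition~\ref{def-vg}(1) becomes a cut between $[0,z_{i_k}+2^{j}]$ and $(z_{i_k}+2^{j+1},+\infty)$; you use $F_{\alpha,n}$ together with $k\ge k_0$ to place $I^+_{i_k,j}$ inside $(0,2^n]$; and you conclude that the flows $\theta_0,\dots,\theta_{i_k}$, whose entry points all lie in $[0,z_{i_k}+2^{j}]$, must cross $I^+_{i_k,j}$ on their way to $(2^n,+\infty)$. This is exactly the paper's argument.

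The place where you try to go further than the paper — the paragraph on ruling out ``bypasses'' via $(-\infty,0]\to(z_{i_k}+2^{j+1},2^n]$ — is where your reasoning goes off track. A hypothetical path $P$ with $z_m\in P$ for some $m\le i_k$ that retreats to $(-\infty,0]$ and re-enters at some $z_{m'}$ with $z_{m'}>z_{i_k}+2^{j+1}$ would still contribute its full mass to $|\theta_m|$ on the right-hand side while contributing nothing to $|\theta_m(I^+_{i_k,j})|$; so ``associating the remaining path-weight with $\theta_{m'}$'' does not repair the claimed bound $|\theta_{\le i_k}(I^+_{i_k,j})|\ge\sum_P w_P|\theta_P|$. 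In fact such a path cannot occur at all: the decomposition from \cite[Page~54]{BDG20} that underlies \eqref{cond-theta} (in particular $\sum_i\theta_i=\theta$ and $\sum_i|\theta_i|=1$, which require each path to meet exactly one $z_i$) decomposes an acyclic flow from the source set $(-\infty,0]$ to the sink set $(2^n,+\infty)$, so once a path has entered $(0,2^n]$ it never returns to $(-\infty,0]$. The paper leans on this implicitly when it asserts that every flow departing from $[0,2^{b_k+1}]$ to $(2^n,+\infty)$ must pass through $I^+_{i_k,j}$; the clean fix in your write-up is to appeal to this structural property of the decomposition rather than to the multiplicity bookkeeping $w_P$.
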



\begin{proof}
Assume that $E_{\alpha,n}\cap F_{\alpha,n}$  occurs and fix $k\in [k_0,K_n]_{\mathds{Z}}$.  For notation convenience, we will denote $j^{(k)}_{i_k}$ as $j_{i_k}$ throughout the proof. By Definition \ref{def-vg} (1) for the $\alpha$-very good pair of intervals, we can see that on the event $E_{\alpha,n}$, we have
\begin{itemize}

\item[(1)] $[(z_{i_k}-2^{j_{i_k}+1})\vee 0,\ z_{i_k}+2^{j_{i_k}}]$ and $(z_{i_k}+2^{j_{i_k}+1},+\infty)$ are not directly connected by any long edge;

\item[(2)] $(0,\ (z_{i_k}-2^{j_{i_k}+1})\vee 0)$ and $[z_{i_k}-2^{j_{i_k}},z_{i_k}+2^{j_{i_k}+1}]$ are not directly connected by any long edge.
\end{itemize}
Moreover, since $z_{i_k}\in (2^{b_k+1},2^{b_k+2}]$ and $j_{i_k}\in (b_k,b_{k-1}]$, it is clear that
\begin{equation}\label{contain}
[0,2^{b_k+1}]\subset [(z_{i_k}-2^{j_{i_k}+1})\vee 0,\ z_{i_k}+2^{j_{i_k}}]=[0,z_{i_k}+2^{j_{i_k}}].
\end{equation}
In addition, by  definitions of $F_{\alpha,n}$ and $k_0$ in \eqref{k0}, we have that
$$
I_{i,j}^+\subset (0,2^n]\quad \text{for all }i\in [1,\overline{\eta}]_{\mathds{Z}}\ \text{and }j\in[1, b_{k_0-1}]_{\mathds{Z}},
$$
which implies $I_{i_k,j_{i_k}}^+\subset (0,2^n]$ for all $k\in [k_0,K_n]_{\mathds{Z}}$.
Combining this with \eqref{contain} and (1), we conclude that every flow, departing from the interval $[0,2^{b_k+1}]\subset [0,z_{i_k}+2^{j_{i_k}}]$ to $(2^n,+\infty)$, must pass through the interval $I_{i_k,j_{i_k}}^+$. This implies that the flow $\theta_i$ passes through $I_{i_k,j_{i_k}}^+$ for all $0\leq i \leq \overline{\eta}_{b_k}$.
Similarly, from (1) and $I_{i_k,j_{i_k}}^+\subset (0,2^n]$ again, we can also see that the flow $\theta_{i_k}$ (from $z_i$ to $(2^n,+\infty)$) must pass through  the interval $I_{i_k,j_{i_k}}^+$ (see Figure \ref{merge-f} for an illustration).
Therefore, combining this with the definition of $\theta_{\leq i_k}(I_{i_k,j_{i_k}}^+)$ in \eqref{flow-int-2}, we can obtain the desired result.
\end{proof}

\begin{figure}[htbp]
\centering
\includegraphics[height=1.6cm,width=16cm]{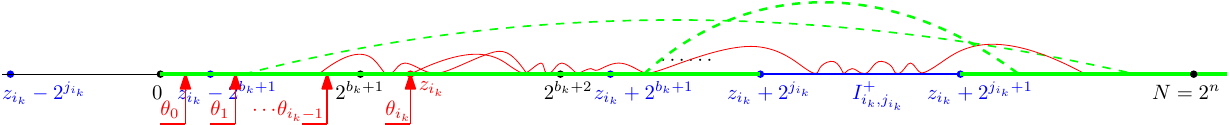}
\caption{The illustration for the proof of Lemma \ref{merge} (note that we also write $j_{i_k}$ for $j_{i_k}^{(k)}$ in this illustration).  The blue line represents the interval $I_{i_k,j_{i_k}}^+$, situated to the left of the point $N=2^n$.
The red arrows represent flows entering the interval $[0,2^{b_k+1}]$ and the point $z_{i_k}$. The
green dashed lines in the graph represent the absence of long edges directly connecting intervals $[(z_{i_k}-2^{j_{i_k}+1})\vee 0,\ z_{i_k}+2^{j_{i_k}}]$  and $[z_{i_k}+2^{j_{i_k}+1},+\infty]$ (the green lines). Thus flows, departing from the interval $[0,2^{b_k+1}]$ and the point $z_{i_k}$ to $(2^n,+\infty)$, must pass through $I_{i_k,j_{i_k}}^+$ (the blue line).}
\label{merge-f}
\end{figure}

In the following, we will use the $j$-th layer pairs of intervals to represent $(I_{i,j}^-,I_{i,j}^+)$ for $i\in [0,\overline{\eta}]$.
For each $k\in [k_0,K_n]_{\mathds{Z}}$, we want to establish a lower bound for the amount of flows passing through the pairs of intervals from the $(b_k+1)$-th to the $b_{k-1}$-th layers. 

For fixed $k\in[k_0,K_n]_{\mathds{Z}}$, let us start by extending the definition of $j^{(k)}_{i_k}$ to general $i$.
For each $i\in(\overline{\eta}_{b_k},\overline{\eta}_{b_0}]$, let us define $j^{(k)}_i\in (b_k,b_{k-1}]$ as the number such that $(I_{i,j^{(k)}_i}^-,I_{i,j^{(k)}_i}^+)$ is an $\alpha$-very good pair of intervals. If there are multiple such numbers, we select the smallest one for $j^{(k)}_i$. 
Additionally, it is important to emphasize that when $i=\overline{\eta}_{b_k}+1$, the $\alpha$-very good pair of intervals chosen here corresponds to $(I_{i_k,j^{(k)}_{i_k}}^-,I_{i_k,j^{(k)}_{i_k}}^+)$, which is defined in the paragraph below \eqref{cond-theta}.


We show that flows passing through those $\alpha$-very good pairs of intervals $(I_{i,j^{(k)}_i}^-,I_{i,j^{(k)}_i}^+)$, $i\in(\overline{\eta}_{b_k},\overline{\eta}_{b_0}]$ are significant as follows.

\begin{lemma}\label{lb-k}
For any fixed $k\in[k_0,K_n]_{\mathds{Z}}$, we have
$$
|\theta_{\leq i_k}(I_{i_k,j^{(k)}_{i_k}}^+)|
+\sum_{i=\overline{\eta}_{b_k}+2}^{\overline{\eta}}\max\left\{|\theta_i(I_{i,j^{(k)}_i}^-)|, |\theta_i(I_{i,j^{(k)}_i}^+)|\right\}\geq 1/2.
$$
\end{lemma}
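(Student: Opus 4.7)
The plan is to reduce the claim, via Lemma \ref{merge} for the $i=i_k$ term, to showing that for each $i\in(\overline{\eta}_{b_k}+1,\overline{\eta}]_{\mathds{Z}}$, every path in the self-avoiding decomposition of $\theta$ that visits $z_i$ is forced to traverse the shell $I^-_{i,j^{(k)}_i}\cup I^+_{i,j^{(k)}_i}$. Once this is established, summing $|\theta_P|$ over such paths yields $|\theta_i(I^-_{i,j^{(k)}_i})|+|\theta_i(I^+_{i,j^{(k)}_i})|\ge|\theta_i|$, hence $\max\{|\theta_i(I^-)|,|\theta_i(I^+)|\}\ge|\theta_i|/2$. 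Combining this with Lemma \ref{merge} (which gives $|\theta_{\le i_k}(I^+_{i_k,j^{(k)}_{i_k}})|\ge\sum_{i=0}^{i_k}|\theta_i|$) and $\sum_{i=0}^{\overline{\eta}}|\theta_i|=1$ from \eqref{cond-theta} then yields
\[
|\theta_{\le i_k}(I^+_{i_k,j^{(k)}_{i_k}})|+\sum_{i=\overline{\eta}_{b_k}+2}^{\overline{\eta}}\max\{|\theta_i(I^-_{i,j^{(k)}_i})|,|\theta_i(I^+_{i,j^{(k)}_i})|\}\ge\sum_{i=0}^{i_k}|\theta_i|+\tfrac12\sum_{i>i_k}|\theta_i|\ge\tfrac12.
\]

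First I will exploit the self-avoiding flow decomposition $\theta=\sum_P\theta_P$ underlying \eqref{cond-theta}. Combining $\theta_i=\sum_{P:z_i\in P}\theta_P$, $\sum_i|\theta_i|=1$, and the standard identity $\sum_P|\theta_P|=|\theta|=1$ gives $\sum_P|\theta_P|\cdot\#\{i:z_i\in P\}=1$; since $\#\{i:z_i\in P\}\ge 1$ for every positive-weight path (any edge from $(-\infty,0]$ to $(0,2^n]$ lands at some $z_{i'}$ by definition of $Z$, and direct edges in $\mathcal{E}_{(-\infty,0]\times(2^n,+\infty)}$ carry no flow), equality forces $\#\{i:z_i\in P\}=1$ for every such $P$. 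Consequently the forward sub-path of $P$ starting at $z_{i(P)}$ cannot return to $(-\infty,0]$---that would require either a second $z_{i'}$-visit, contradicting uniqueness, or a forbidden direct long edge to $(2^n,+\infty)$---so it is confined to $(0,+\infty)$ and terminates in $(2^n,+\infty)$.

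For the cut property, fix $i>i_k$ and write $j=j^{(k)}_i\in(b_k,b_{k-1}]_{\mathds{Z}}$. On $F_{\alpha,n}$, combining $z_i\le z_{\overline{\eta}}\le c'2^n$ with \eqref{k0} shows $z_i+2^{j+1}\le 2^n$, so $(2^n,+\infty)\subset(z_i+2^{j+1},+\infty)$. Consider the $\beta$-LRP graph restricted to $(0,+\infty)$ with the shell $I^-_{i,j}\cup I^+_{i,j}$ removed; the remaining vertices split into the inner region $N$ containing $z_i$, the left outer $L=(0,(z_i-2^{j+1})\vee 0)$, and the right outer $R=(z_i+2^{j+1},+\infty)$. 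Definition \ref{def-vg}(1) prohibits long edges between $N$ and $R$, Definition \ref{def-vg}(2) prohibits long edges between $N$ and $L$, and the nearest-neighbor edges out of $N$ cross only into the removed shell; hence $N$ is disconnected from $L\cup R$, so $I^-_{i,j}\cup I^+_{i,j}$ is a cut separating $z_i\in N$ from $(2^n,+\infty)\subset R$ in this restricted graph. Since the forward portion of each relevant path lies entirely in $(0,+\infty)$ and runs from $z_i$ to $(2^n,+\infty)$, it must pass through this cut. The main subtlety is precisely this cut argument: Definition \ref{def-vg} (via the $\vee 0$ in (1) and the open $(0,\cdot)$ in (2)) controls only long edges inside $(0,+\infty)$, so long edges between $N$ and $(-\infty,0]$ are allowed a priori; the confinement of paths to the half-line established in the first step is exactly what rules them out.
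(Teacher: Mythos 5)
Your proof is correct and follows essentially the same strategy as the paper's: show that for each $i>i_k$ the forward portion of the flow $\theta_i$ must traverse the shell $I^-_{i,j^{(k)}_i}\cup I^+_{i,j^{(k)}_i}$, deduce $\max\{|\theta_i(I^-)|,|\theta_i(I^+)|\}\ge|\theta_i|/2$, and combine with Lemma \ref{merge} and \eqref{cond-theta}. You are in fact more careful than the paper's one-line ``must pass through'' assertion, correctly noting that one must first confine the forward path to $(0,+\infty)$ (via uniqueness of the $z_i$-visit, forced by $\sum_i|\theta_i|=1$), and that $z_i+2^{j+1}\le 2^n$ is needed so that the right outer region contains $(2^n,+\infty)$. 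One small slip: in the cut argument the prohibition of long edges between $N$ and $L$ comes from the \emph{second clause of} Definition \ref{def-vg}(1), not from Definition \ref{def-vg}(2) (which concerns a resistance bound); this does not affect the substance of the argument.
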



\begin{proof}
For each $i\in [\overline{\eta}_{b_k}+2,\overline{\eta}_{b_0}]_{\mathds{Z}}$, since $(I_{i,j^{(k)}_i}^-,I_{i,j^{(k)}_i}^+)$ is an $\alpha$-very good pair of intervals, the flow $\theta_i$ emanating from the point $z_i$ to $(2^n,+\infty)$ must pass through the interval $I_{i,j^{(k)}_i}^+$ or $I_{i,j^{(k)}_i}^-$. This means that
$$
|\theta_i(I_{i,j^{(k)}_i}^-)|+|\theta_i(I_{i,j^{(k)}_i}^+)|\geq |\theta_i|.
$$
Therefore, $\max\{|\theta_i(I_{i,j^{(k)}_i}^-)|, |\theta_i(I_{i,j^{(k)}_i}^+)| \}\geq |\theta_i|/2$. Combining this with Lemma \ref{merge}, \eqref{cond-theta} and $\overline{\eta}_{b_0}=\overline{\eta}$, we arrive at
$$
\begin{aligned}
&|\theta_{\leq i_k}(I_{i_k,j^{(k)}_{i_k}}^+)|
+\sum_{i=\overline{\eta}_{b_k}+2}^{\overline{\eta}_{b_0}}\max\left\{|\theta_i(I_{i,j^{(k)}_i}^-)|, |\theta_i(I_{i,j^{(k)}_i}^+)| \right\}\\
& \geq \sum_{i=0}^{\overline{\eta}_{b_k}+1}|\theta_i|+\frac{1}{2}\sum_{i=\overline{\eta}_{b_k}+2}^{\overline{\eta}}|\theta_i|\geq \frac{1}{2}.
\end{aligned}
$$
Hence, the proof is complete.
\end{proof}

 We also need the following lemma.
\begin{lemma}\label{i12}
Assume that $k_1,k_2\in [k_0,K_n]_{\mathds{Z}}$ with $k_1\neq k_2$. If $I^{\pm}_{i_1,j_{i_1}^{(k_1)}}\cap I^{\pm}_{i_2,j_{i_2}^{(k_2)}}\neq \varnothing$ or $I^{\pm}_{i_1,j_{i_1}^{(k_1)}}\cap I^{\mp}_{i_2,j_{i_2}^{(k_2)}}\neq \varnothing$, then we have $i_1\neq i_2$.
\end{lemma}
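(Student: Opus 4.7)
The plan is to argue by contraposition: assuming $i_1=i_2=:i$, I will show that all four possible intersections
$I^{\pm}_{i,j^{(k_1)}_i}\cap I^{\pm}_{i,j^{(k_2)}_i}$ and $I^{\pm}_{i,j^{(k_1)}_i}\cap I^{\mp}_{i,j^{(k_2)}_i}$ are empty, using only the dyadic structure of the intervals $I^{\pm}_{i,j}$ around the single center $z_i$ and the separation of scales forced by $k_1 \neq k_2$.

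First I would record the separation of scales. By \eqref{def-bk} (with $M>0$) the sequence $\{b_k\}$ is strictly decreasing in $k$, so the index windows $(b_{k_1}, b_{k_1-1}]_{\mathds{Z}}$ and $(b_{k_2}, b_{k_2-1}]_{\mathds{Z}}$ are disjoint subsets of $\mathds{N}$ whenever $k_1\neq k_2$. Since $j^{(k_1)}_i$ was chosen in $(b_{k_1}, b_{k_1-1}]_{\mathds{Z}}$ and $j^{(k_2)}_i$ in $(b_{k_2}, b_{k_2-1}]_{\mathds{Z}}$, this gives $j^{(k_1)}_i \neq j^{(k_2)}_i$, so without loss of generality $j^{(k_1)}_i \geq j^{(k_2)}_i+1$; set $j_1 := j^{(k_1)}_i$ and $j_2 := j^{(k_2)}_i$.

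Next, using the definitions in \eqref{I+-} I would check the four intersections directly. For the ``$++$'' case, $I^+_{i,j_2} = (z_i+2^{j_2}, z_i+2^{j_2+1}]$ lies to the left of $I^+_{i,j_1} = (z_i+2^{j_1}, z_i+2^{j_1+1}]$ because $z_i+2^{j_2+1}\leq z_i+2^{j_1}$ (since $j_1\geq j_2+1$), so the two half-open intervals are disjoint. The ``$--$'' case is symmetric, using $z_i-2^{j_1}\leq z_i-2^{j_2+1}$. For the ``$+-$'' and ``$-+$'' cases, $I^+_{i,j_m}$ lies strictly to the right of $z_i$ while $I^-_{i,j_{m'}}$ lies strictly to the left, so their intersection is trivially empty. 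Contraposing then gives exactly the statement of the lemma.

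There is essentially no obstacle here: the only delicate point is making sure the dyadic windows close at the right endpoints so that the ``$++$'' and ``$--$'' comparisons are strict, which is built into the half-open convention in \eqref{I+-}, and confirming that the $b_k$'s are strictly decreasing, which follows from the eventual choice $M>2$ made in Lemma \ref{ck-order-lem} and \eqref{cond-L}--\eqref{E-M}.
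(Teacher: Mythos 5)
Your proof is correct and follows essentially the same route as the paper's: both argue by contradiction/contraposition from $i_1 = i_2$, use the disjointness of the scale windows $(b_{k_1},b_{k_1-1}]_{\mathds{Z}}$ and $(b_{k_2},b_{k_2-1}]_{\mathds{Z}}$ to get $j^{(k_1)}_{i_1}\neq j^{(k_2)}_{i_2}$, and then invoke the disjointness of the dyadic annuli $I^\pm_{i,j}$ around a common center for distinct $j$. The only difference is that you spell out all four sign cases explicitly, whereas the paper treats the $(+,+)$ case and remarks the others are similar.
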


\begin{proof}
We will prove the case when $I^{+}_{i_1,j_{i_1}^{(k_1)}}\cap I^{+}_{i_2,j_{i_2}^{(k_2)}}\neq \varnothing$ here, and the other cases can be proved similarly. Given that $k_1\neq k_2$, from $j_\cdot^{k}\in (b_k,b_{k-1}]$ for all $k\in [k_0,K_n]_{\mathds{Z}}$ we have
\begin{equation}\label{k12}
j_{i_1}^{(k_1)}\neq j_{i_2}^{(k_2)}.
\end{equation}

To complete the proof, we will proceed by a contradiction. Let us assume that $i_1=i_2$. Then from this assumption and the definition of $I_{ij}^+$ in \eqref{I+-}, we get
\begin{equation*}\label{varno}
(z_{i_1}+2^{j_{i_1}^{(k_1)}},z_{i_1}+2^{j_{i_1}^{(k_1)}+1}]\cap  (z_{i_1}+2^{j_{i_1}^{(k_2)}},z_{i_1}+2^{j_{i_1}^{(k_2)}+1}]\neq \varnothing,
\end{equation*}
This implies $j_{i_1}^{(k_1)}= j_{i_1}^{(k_2)}$, which contradicts \eqref{k12}.
\end{proof}

We now turn to the
\begin{proof}[Proof of Lemma \ref{induct-thm}]
For a fixed $\alpha\in(0,1/2]$, recall that we have assumed that the event $E_{\alpha,n}\cap F_{\alpha,n}$ occurs,
$k_0=k_0(\alpha)$ is the constant defined in \eqref{k0} with $\gamma=\alpha$,
and $a_{\alpha,i}$ represents the $(1-\alpha)$-quantile of the effective resistance $\widehat{R}_n$ (see \eqref{def-epsi}).

For each $k\in[k_0,K_n]_{\mathds{Z}}$, since the event in Definition \ref{def-E} (1) for $E_{\alpha,n}$ occurs, from \eqref{eta-bk} and \eqref{mu-i} we can see that there is a constant $c_1(\beta)>0$ (depending only on $\beta$) such that
\begin{equation*}\label{eta}
\overline{\eta}_{b_0}-\overline{\eta}_{b_k}\leq 2 \sum_{i=1}^k\mu_{b_{i-1},b_i}\leq c_1(\beta) d_k \quad \text{for all }k\in [1,K_n]_{\mathds{Z}}.
\end{equation*}
Combining this with Lemma \ref{lb-k} and the definition of $\alpha$-very good pair of intervals in Definition \ref{def-vg} (2) and (3), we get that the sum of energies (i.e.\ effective resistance) of flows $\theta_0,\cdots,\theta_{\overline{\eta}}$ passing through the pairs of intervals from the $(b_k+1)$-th to the $b_{k-1}$-th layers is at least
\begin{equation}\label{k-layer}
\begin{aligned}
&|\theta_{\leq i_k}(I_{i_k,j^{(k)}_{i_k}}^+)|^2a_{\alpha, j^{(k)}_{i_k}}
+\sum_{i=\overline{\eta}_{b_k}+2}^{\overline{\eta}_{b_0}}\max\left\{|\theta_i(I_{i,j^{(k)}_i}^-)|, |\theta_i(I_{i,j^{(k)}_i}^+)| \right\}^2 a_{\alpha,j^{(k)}_i}\\
&\geq \min\{a_{\alpha,b_k+1},\cdots,a_{\alpha,b_{k-1}}\}\left(|\theta_{\leq i_k}(I_{i_k,j^{(k)}_{i_k}}^+)|^2+\sum_{i=\overline{\eta}_{b_k}+2}^{\overline{\eta}_{b_0}}\max\left\{|\theta_i(I_{i,j^{(k)}_i}^-)|, |\theta_i(I_{i,j^{(k)}_i}^+)| \right\}^2\right)\\
&\geq \frac{ \min\{a_{\alpha,b_k+1},\cdots,a_{\alpha,b_{k-1}}\}}{4c_1(\beta) d_k}.
\end{aligned}
\end{equation}

We next consider the total energy generated by flows passing through all layers from $k_0$ to $K_n$. It follows from Lemma \ref{i12} that although intervals $I^{\pm}_{i,j_i^{(k)}}$ from different layers may intersect, any two intersecting intervals must have different subscripts $i$.
This implies that for intersecting intervals, the energy considered in different layers comes from energies generated by different flows (see Figure \ref{intersection} for an illustration). Therefore, by \eqref{unidirect} we can add up the above energy of each layer and get a lower bound

\begin{figure}[htbp]
\centering
\includegraphics[height=2.0cm,width=12cm]{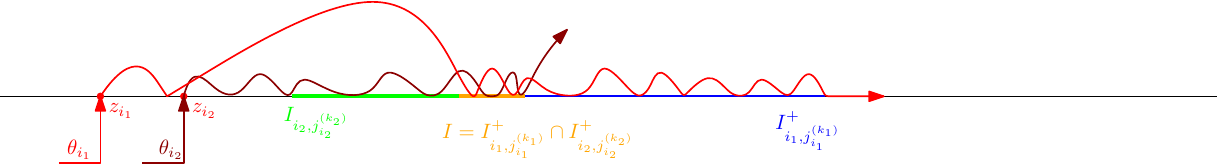}
\caption{The illustration for the intersection of $\alpha$-very good intervals from different layers. The red arrow represents the flow $\theta_{i_1}$  entering at the point $z_{i_1}$, while the dark red arrow represents the flow $\theta_{i_2}$ entering at the point $z_{i_2}$. The blue line represents the $\alpha$-very good interval $I^+_{i_1,j^{(k_1)}_{i_1}}$ discovered during the search in the $k_1$-th layer near $z_{i_1}$, while the green line represents the $\alpha$-very good interval $I^+_{i_2,j^{k_2}_{i_2}}$ discovered during the search in the $k_2$-th ($k_2< k_1$) layer near $z_{i_2}$. The orange line represents the interval $I:=I^+_{i_1,j_{i_1}^{(k_1)}}\cap I^+_{i_2,j^{k_2}_{i_2}}$.
It can be observed that in the $k_1$-th layer the energy in $I\subset I^+_{i_1,j_{i_1}^{(k_1)}}$ considered in \eqref{k-layer} (with $k=k_1$) is generated by the flow $\theta_{i_1}$, while in the $k_2$-th layer the energy in $I\subset I^+_{i_2,j_{i_2}^{(k_2)}}$ considered in \eqref{k-layer} (with $k=k_2$) is generated by the flow $\theta_{i_2}$.
It is important to point out that the total energy generated when flows $\theta_{i_1}$  and $\theta_{i_2}$ pass through the interval $I$ together is greater than the sum of energies generated when each of them individually passes through that interval, thanks to \eqref{unidirect}.
}
\label{intersection}
\end{figure}
\begin{equation}\label{sum-Rn}
\begin{aligned}
\widehat{R}_n
&\geq \frac{1}{4c_1(\beta)}\sum_{k=k_0}^{K_n}\frac{ \min\{a_{\alpha,b_k+1},\cdots,a_{\alpha,b_{k-1}}\}}{ d_k}.
\end{aligned}
\end{equation}
\begin{figure}[htbp]
\centering
\includegraphics[height=1.5cm,width=15cm]{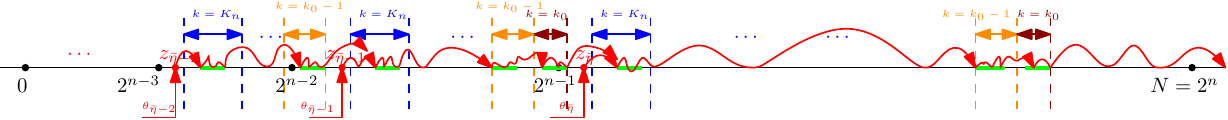}
\caption{The illustration for \eqref{sum-Rn}. The red arrows represent flows entering $(0,N]$; the blue, orange and dark red dashed lines represent the search steps.
In addition, green lines represent the $\alpha$-very good intervals discovered (note that, for clarity, the figure here only shows the case where flows pass through and exit from $I^+_{\cdot}$; in reality, flows may also pass through and exit from $I^-_{\cdot}$).
Each flow, as it moves rightward, must enter the green lines within the search range at each step and generate a certain amount of energy. Although these flows may intersect, the total energy generated by these flows is greater than the sum of energies generated by individual flows, thanks again to \eqref{unidirect}.
}
\label{flowsum}
\end{figure}
(See Figure \ref{flowsum} for an illustration). Hence we complete the proof by taking $c=1/(4c_1(\beta) )$, which depends only on $\beta$.
\end{proof}

We now can provide the

\begin{proof}[Proof of Proposition \ref{an-thm}]
Fix $\alpha\in(0,1/2]$. Throughout the proof, we also fix a sufficiently large $L$ satisfying \eqref{cond-L}, which depends only on $\beta$.

 According to \eqref{k0} and the definition of $b_k$ in \eqref{def-bk}, there exists a constant $M_1=M_1(\alpha)>0$ (depending only on $\beta$ and $\alpha$) such that for all $M\geq M_1$, we have
$
k_0=k_0(\alpha)=2.
$
In addition, by taking $\varepsilon=\alpha $ in Proposition \ref{Prob-vg}, we get that there exist $M_0=M_0(\alpha)>0$ and $n_0=n_0(\alpha)>0$ such that when $M\geq M_0$, we have $\mathds{P}[E_{\alpha,n}]\geq 1-\alpha/2$ for all $n> n_0$. Combining the above analysis with Lemma \ref{z-eta}, we can see that for $M'(\alpha)=\max\{M_1,M_0\}$ and for all $n\geq n_0'(\alpha):=\max\{n_0,n_1\}$,
\begin{equation*}\label{k0-1}
\mathds{P}[E_{\alpha,n}\cap F_{\alpha,n}]\geq 1-\alpha\quad \text{and}\quad  k_0=2.
\end{equation*}
Applying this into Lemma \ref{induct-thm} we arrive at
$$
\mathds{P}\left[\widehat{R}_n\geq c\sum_{k=2}^{K_n}\frac{\min\{a_{\alpha,b_{k}+1},\cdots, a_{\alpha,b_{k-1}}\}}{d_k}\right]\geq 1-\alpha
$$
for some constant $c>0$ depending only on $\beta$. Therefore, by the definition of $(1-\alpha)$-quantile of $\widehat{R}_n$ in \eqref{def-epsi}, we see that
$$
a_{\alpha,n}\geq c\sum_{k=2}^{K_n}\frac{\min\{a_{\alpha,b_{k}+1},\cdots, a_{\alpha,b_{k-1}}\}}{d_k}
$$
for all $n\geq n'_0$.
\end{proof}

\subsection{Proof of Theorem \ref{Rn} and Proposition \ref{quantile-lb}}\label{proof-mr1}

Let us start with the

\begin{proof}[Proof of Proposition \ref{quantile-lb}]
Fix $\alpha\in(0,1/2]$. Recall that Proposition \ref{an-thm} gives a recursive formula for $(1-\alpha)$-quantiles $\{a_{\alpha,i}\}_{i\geq 1}$. With the choice of $L$ (depending only on $\beta$) and $M'(\alpha)$ (depending only on $\beta$ and $\alpha$) as in Proposition \ref{an-thm}, from Lemma \ref{ck-order-lem} we can see that there exists $M_1\in \mathds{N}$ (depending only on $\beta$ and $\alpha$) such that
\begin{equation}\label{M1}
c\sum_{k=2}^{M_1}\frac{1}{d_k}>2.
\end{equation}
 We also choose $\delta>0$ (depending only on $\beta$ and $\alpha$) such that $\exp\{\delta d_{M_1}\}<2$, and choose $\widetilde{c}=\widetilde{c}(\alpha)>0$ (depending only on $\beta$ and $\alpha$) such that
\begin{equation}\label{a1-cM1}
a_{\alpha,k}\geq \widetilde{c}\e^{\delta k}\quad \text{for all }k\in [1,d_{M_1}]_{\mathds{Z}}.
\end{equation}

We now use induction to show that for all $n\geq 1$,
\begin{equation}\label{induct-an}
a_{\alpha,n}\geq \widetilde{c}\e^{\delta n}.
\end{equation}
The case for $n\in [1,d_{M_1}]_{\mathds{Z}}$ holds as in \eqref{a1-cM1} thanks to our choice of parameters.
Assume that $n>d_{M_1}$ and  \eqref{a1-cM1} holds for all $k\in [1,n-1]_{\mathds{Z}}$.
Then by Proposition \ref{an-thm}, \eqref{M1} and $b_k=n-d_k$, we obtain that
\begin{align*}
a_{\alpha,n}&\geq c\sum_{k=2}^{K_n}\frac{\min\{a_{\alpha,n-d_k+1},\cdots,a_{\alpha,n-d_{k-1}}\}}{d_k}
\geq c\sum_{k=2}^{K_n} \frac{\widetilde{c}\e^{\delta(n-d_k)}}{d_k}
\geq c\sum_{k=2}^{M_1}\frac{\widetilde{c}\e^{\delta(n-d_{M_1})}}{d_k}\\
&\geq \widetilde{c}\e^{\delta n}\left(c\sum_{k=2}^{M_1}\frac{1}{d_k}\right)\e^{-\delta d_{M_1}}\geq \widetilde{c}\e^{\delta n}.
\end{align*}
Consequently, \eqref{induct-an} holds for all $n\geq 1$.
\end{proof}

We now turn to the

\begin{proof}[Proof of Theorem \ref{Rn}]
Fix $\varepsilon\in (0,1/2]$. We begin by considering the event $E_{1/2,n}$.
From Proposition \ref{Prob-vg}, we obtain that there exist $L>0$ large enough (depending only on $\beta$), $M_0=M_0(\varepsilon)>0$ and $n_0=n_0(\varepsilon)>0$ (both depending only on $\beta$ and $\varepsilon$) such that when $M\geq M_0$, we have $\mathds{P}[E_{1/2,n}]\geq 1-\varepsilon/2$ for all $n\geq n_0$.

We next turn to the event $F_{\varepsilon,n}$. It follows from Lemma \ref{z-eta} that there exist $c'=c'(\varepsilon)\in (0,1)$ and $n_1=n_1(\varepsilon)>0$ (both depending only on $\beta$ and $\varepsilon$) such that for all $n\geq n_1$, we have $\mathds{P}[F_{\varepsilon, n}]\geq 1-\varepsilon/2$.
Moreover, according to \eqref{k0} and the definition of $b_k$ in \eqref{def-bk}, we can see that there exists a constant $M_1=M_1(\alpha)>0$ (depending only on $\beta$ and $\alpha$) such that for all $M\geq M_1$, we have
$
k_0=k_0(\alpha)=2.
$

We now apply $M=\max\{M_0, M_1\}$ and $L$ satisfying \eqref{cond-L} to the definition of $b_k$ in \eqref{def-bk}. Then combining this with $k_0=2$,  Lemma \ref{induct-thm} and  Proposition \ref{quantile-lb}, we can see that on the event $E_{1/2,n}\cap F_{\varepsilon,n}$,
\begin{equation*}
\begin{aligned}
\widehat{R}_n&\geq c\sum_{k=2}^{K_n}\frac{\min\{a_{\frac{1}{2},b_{k}+1},\cdots, a_{\frac{1}{2},b_{k-1}}\}}{d_k}\geq c\frac{\min\{a_{\frac{1}{2},b_2+1},\cdots, a_{\frac{1}{2},b_1}\}}{d_2}\\
&\geq \widetilde{c}(\varepsilon) \e^{\delta(\frac{1}{2})(n-2M-2L\log(2LM))}=:\widetilde{c}_1(\varepsilon)\e^{\delta(\frac{1}{2})n},
\end{aligned}
\end{equation*}
where $c$ and $\delta(\frac{1}{2})$ are parameters in Proposition \ref{quantile-lb} with $\alpha=1/2$ (depending only on $\beta$), and  $\widetilde{c}(\varepsilon),\widetilde{c}_1(\varepsilon)$ are positive constants (both depending only on $\beta$ and $\varepsilon$).
Therefore, from the above analysis we can find $\widetilde{c}_2(\varepsilon)>0$ (depending only on $\beta$ and $\varepsilon$) such that
$$
\mathds{P}\left[\widehat{R}_n\geq \widetilde{c}_2(\varepsilon)\e^{\delta(\frac{1}{2})n}\right]\geq \mathds{P}[E_{1/2,n}\cap F_{\varepsilon,n}] \geq 1-\varepsilon\quad \text{for all }n\geq \max\{n_0,n_1\}.
$$
Furthermore, since $n_0$ and $n_1$ depend only on $\beta$ and $\varepsilon$, we can take $\widetilde{c}_3(\varepsilon)>0$ (depending only on $\beta$ and $\varepsilon$) such that
$$
\mathds{P}\left[\widehat{R}_n\geq \widetilde{c}_3(\varepsilon)\e^{\delta(\frac{1}{2})n}\right]\geq 1-\varepsilon\quad \text{for all }n\geq 1.
$$
Hence, the proof is complete.
\end{proof}

\section{Proof of Theorem \ref{mr}}\label{sect-proof}

For $N\geq 1$, we recall that $R(0,[-N,N]^c)$ is the effective resistance between $0$ and $[-N,N]^c$ as defined in \eqref{eff-resis} with $I_1=\{0\}$ and $I_2=[-N,N]^c$, and that $\widehat{R}([-N,0],[-N,N]^c)$ is the effective resistance generated by unit flows from $[-N,0]$ to $[-N,N]^c$, passing through the interval $(0,N]$ as defined in \eqref{def-Rtilde}.
In particular, recall that $\widehat{R}((-\infty,0],(N,+\infty))=\widehat{R}_n$ since $N=2^n$.


As we mentioned in Subsection \ref{outline}, to complete the proof of Theorem \ref{mr}, it is essential to  establish a lower bound on  $\widehat{R}([-N,0],[-N,N]^c)$ in terms of $\widehat{R}_n$, which will allow us to use estimates for $\widehat{R}_n$  in Section \ref{sect-Rhat}.
To achieve this, note that flows in the definition of $\widehat{R}([-N,0],[-N,N]^c)$ will eventually flow into (either of) the two intervals $(-\infty, -N)$ and $(N,+\infty)$. Therefore, we further decompose $\widehat{R}([-N,0],[-N,N]^c)$ (see Figure \ref{split}) and define
\begin{equation}\label{Rtilde}
\begin{aligned}
\widetilde{R}_n&=\inf\left\{\frac{1}{2}\sum_{i\sim j }f^2_{ij}:\ f\ \text{is a unit flow from $[-N,0]$ to }(-\infty,-N),\right.\\
&\quad \quad \quad\quad \quad  \text{and }f_{ij}=0\ \text{for all }\langle i,j\rangle\in \mathcal{E}_{[-N,0]\times [-N,N]^c}\cup \mathcal{E}_{(0,N]\times (N,+\infty)} \Bigg\}
\end{aligned}
\end{equation}
as the effective resistance generated by unit flows from $[-N,0]$ to $(-\infty,-N)$, passing through the interval $(0,N]$.
Note that if there is no edge joining $(0,N]$ and  $(-\infty,-N)$, we have $\widetilde{R}_n=\infty$.

\begin{figure}[htbp]
\centering
\includegraphics[height=3.5cm,width=16cm]{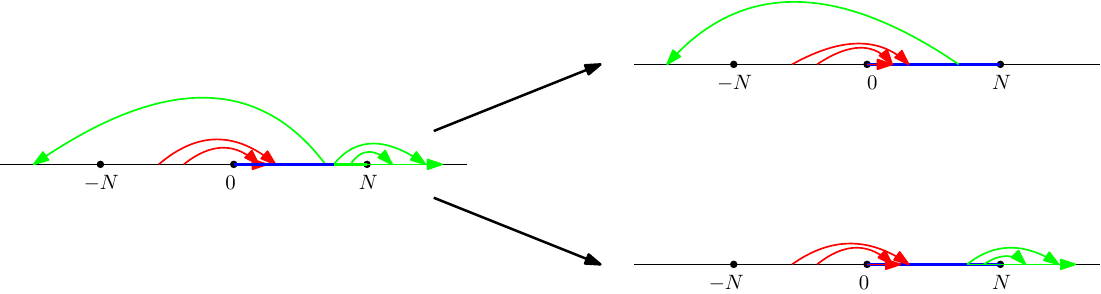}
\caption{The illustration for the ``decomposition'' of $\widehat{R}([-N,0],[-N,N]^c)$.
The red arrows represent flows entering $(0,N]$ from $[-N,0]$, while the green arrows represent flows exiting $(0,N]$ to $[-N,N]^c$. The blue lines represent the interval $(0,N]$ that flows must pass through.
Depending on whether flows ultimately enter the interval $(-\infty,-N)$ or $(N,+\infty)$, flows in the definition \eqref{def-Rtilde} of $\widehat{R}([-N,0],[-N,N]^c)$ can be divided into two portions corresponding to the two figures on the right.
The effective resistance generated by these flows in top right figure is $\widetilde{R}_n$, while the effective resistance generated by these flows in bottom right figure is $\widehat{R}([-N,0],(N,+\infty))$.}
\label{split}
\end{figure}

\subsection{Properties of $\widetilde{R}_n$}\label{propertyRhat}
In this subsection, our goal is to show the existence of a certain stochastic control between $\widetilde{R}_n$ and $\widehat{R}_n$ (see \eqref{Rtilde-Rhat} below). This will allow us to obtain that with high probability, $\widetilde{R}_n$ also exhibits an exponential lower bound from estimates for $\widehat{R}_n$ in Section \ref{sect-Rhat}.

Now for any $m\geq 1$, $i_1,\cdots i_m\in (0,N]$, and any $j_1,\cdots, j_m\in (-\infty,-N)$, it is obvious that
\begin{equation}\label{prop-edge}
\begin{aligned}
&\mathds{P}\left[\langle i_k,j_k \rangle\in \mathcal{E}_{(0,N]\times (-\infty,-N)}\ \text{for all }k\in [1,m]_\mathds{Z}\right]\\
&\leq  \mathds{P}\left[\langle i_k,|j_k| \rangle\in \mathcal{E}_{(0,N]\times (N,+\infty)}\ \text{for all }k\in [1,m]_\mathds{Z}\right].
\end{aligned}
\end{equation}
In addition,  assume that $\mathcal{E}_{[-N,N]^2}$ is given. According to the monotonicity property of the effective resistance $\widehat{R}(\cdot,\cdot)$ in Lemma \ref{mono} (2), we have that $\widehat{R}([-N,0],(N,\infty))$ is non-increasing with respect to the edge set connecting $(0,N]$ and $(N,+\infty)$. That is, for any two deterministic edge sets $E_1,E_2\subset (0,N]\times (N,+\infty)$ with $E_1\subset E_2$,
\begin{equation}\label{noninc-Rhat-1}
\widehat{R}([-N,0],(N,\infty))(E_2)\leq \widehat{R}([-N,0],(N,\infty))(E_1),
\end{equation}
where $\widehat{R}([-N,0],(N,\infty))(E_k),\ k=1,2$ represent effective resistances as defined in \eqref{def-Rhat} by replacing $\mathcal{E}_{(0,N]\times (N,+\infty)}$ by $E_k,\ k=1,2$, respectively.


From the above analysis, we claim that for any $x\geq 0$,
\begin{equation}\label{Rtilde-Rhat}
\mathds{P}\left[\widetilde{R}_n> x\right]\geq \mathds{P}\left[\widehat{R}([-N,0],(N,+\infty))> x \right]\geq \mathds{P}\left[\widehat{R}_n> x\right].
\end{equation}
Indeed, the second inequality can be obtained from the monotonicity property of the effective resistance in Lemma \ref{mono} (2).
For the first inequality in \eqref{Rtilde-Rhat}, let
$$
\widetilde{\mathcal{E}}_{(0,N]\times (N,+\infty)}=\{\langle i, |j|\rangle:\ \langle i,j\rangle \in \mathcal{E}_{(0,N]\times (-\infty,-N)}\}.
$$
From \eqref{prop-edge}, we can construct a coupling of $(\widetilde{\mathcal{E}}'_{(0,N]\times (N,+\infty)}, \mathcal{E}'_{(0,N]\times (N,+\infty)})$ such that
$$
\widetilde{\mathcal{E}}'_{(0,N]\times (N,+\infty)}\overset {\text{law}}{=}\widetilde{\mathcal{E}}_{(0,N]\times (N,+\infty)}\quad \text{and}\quad \mathcal{E}'_{(0,N]\times (N,+\infty)}\overset {\text{law}}{=}\mathcal{E}_{(0,N]\times (N,+\infty)},
$$
and $\widetilde{\mathcal{E}}'_{(0,N]\times (N,+\infty)}\subset \mathcal{E}'_{(0,N]\times (N,+\infty)}$.
Then combining this with \eqref{noninc-Rhat-1}, we arrive at
\begin{equation*}
\begin{aligned}
\widetilde{R}_n&\overset {\text{law}}{=}\widehat{R}_n([-N,0],(N,+\infty))(\widetilde{\mathcal{E}}'_{(0,N]\times (N,+\infty)})\\
&\geq \widehat{R}_n([-N,0],(N,+\infty))(\mathcal{E}'_{(0,N]\times (N,+\infty)})\overset {\text{law}}{=}\widehat{R}_n([-N,0],(N,+\infty)),
\end{aligned}
\end{equation*}
where $\widehat{R}_n([-N,0],(N,+\infty))(\widetilde{\mathcal{E}}'_{(0,N]\times (N,+\infty)})$ and $\widehat{R}_n([-N,0],(N,+\infty))(\mathcal{E}'_{(0,N]\times (N,+\infty)})$ are effective resistances as defined in \eqref{def-Rhat}
 by replacing $\mathcal{E}_{(0,N]\times (N,+\infty)}$ by $\widetilde{\mathcal{E}}'_{(0,N]\times (N,+\infty)}$ and $\mathcal{E}'_{(0,N]\times (N,+\infty)}$, respectively. This implies the first inequality in \eqref{Rtilde-Rhat}.

Combining  \eqref{Rtilde-Rhat} with Theorem \ref{Rn}, we get the following lemma.

\begin{lemma}\label{Rn-tilde}
For any $\beta>0$, there is a constant $\delta>0$ {\rm(}depending only on $\beta${\rm)} such that the following holds.
For any $\varepsilon\in(0,1/2]$, there exists a constant $c=c(\varepsilon)>0$ {\rm(}depending only on $\beta$ and $\varepsilon${\rm)} such that for all $n\geq 1$,
$$
\mathds{P}\left[\widetilde{R}_n\geq c{\rm e}^{\delta n}\right]\geq 1-\varepsilon.
$$
\end{lemma}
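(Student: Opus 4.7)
The plan is to apply the stochastic comparison \eqref{Rtilde-Rhat} established earlier in this subsection together with the exponential lower bound in Theorem \ref{Rn}, which will yield Lemma \ref{Rn-tilde} essentially with no further work.

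More concretely, I would proceed as follows. Fix $\beta>0$ and let $\delta=\delta(\beta)>0$ be the constant furnished by Theorem \ref{Rn}. Given $\varepsilon\in(0,1/2]$, apply Theorem \ref{Rn} to obtain $c=c(\varepsilon)>0$ (depending only on $\beta$ and $\varepsilon$) with
\[
\mathds{P}\!\left[\widehat{R}_n\geq c\e^{\delta n}\right]\geq 1-\varepsilon\qquad\text{for all }n\geq 1.
\]
To match the strict inequality appearing in \eqref{Rtilde-Rhat}, I would simply replace $c$ by $c/2$, so that
\[
\mathds{P}\!\left[\widehat{R}_n> (c/2)\e^{\delta n}\right]\geq \mathds{P}\!\left[\widehat{R}_n\geq c\e^{\delta n}\right]\geq 1-\varepsilon.
\]
Then, setting $x=(c/2)\e^{\delta n}$ in the first inequality of \eqref{Rtilde-Rhat} yields
\[
\mathds{P}\!\left[\widetilde{R}_n> (c/2)\e^{\delta n}\right]\geq \mathds{P}\!\left[\widehat{R}_n > (c/2)\e^{\delta n}\right]\geq 1-\varepsilon,
\]
which is the desired statement (after relabelling $c/2$ as $c$).

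There is no real obstacle here since the heavy lifting has been done in Theorem \ref{Rn} and in the derivation of \eqref{Rtilde-Rhat}; the only mildly subtle point is distinguishing strict from non-strict inequalities, which is handled by the trivial constant adjustment above. All parameter dependencies are preserved: $\delta$ depends only on $\beta$ (inherited from Theorem \ref{Rn}), and the constant $c$ depends only on $\beta$ and $\varepsilon$.
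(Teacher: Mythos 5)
Your proposal is correct and is exactly the argument the paper uses: the paper states Lemma \ref{Rn-tilde} immediately after deriving \eqref{Rtilde-Rhat}, with the one-line justification that it follows by combining \eqref{Rtilde-Rhat} with Theorem \ref{Rn}. Your extra care in reconciling the strict inequality in \eqref{Rtilde-Rhat} with the non-strict one in Theorem \ref{Rn} via the harmless replacement of $c$ by $c/2$ is a sound, if unneeded in the paper's terse presentation, bit of bookkeeping.
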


Furthermore, we have the following estimate for $\widehat{R}([-N,0],[-N,N]^c)$.
\begin{lemma}\label{lem-Rhat}
For any $\beta>0$, there is a constant $\delta>0$ {\rm(}depending only on $\beta${\rm)} such that the following holds.
For any $\varepsilon\in(0,1/2]$, there exists a constant $c=c(\varepsilon)>0$ {\rm(}depending only on $\beta$ and $\varepsilon${\rm)} such that for all $N=2^n\ (n\geq 1)$,
$$
\mathds{P}\left[\widehat{R}([-N,0],[-N,N]^c)\geq c{\rm e}^{\delta n}\right]\geq 1-\varepsilon.
$$
\end{lemma}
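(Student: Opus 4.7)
The plan is to bound $\widehat{R}([-N,0],[-N,N]^c)$ from below by the minimum of two already-controlled resistances, namely $\widetilde{R}_n$ (by Lemma \ref{Rn-tilde}) and $\widehat{R}([-N,0],(N,+\infty))$ (by the stochastic domination \eqref{Rtilde-Rhat} combined with Theorem \ref{Rn}), and then conclude with a union bound. As illustrated in Figure \ref{split}, any admissible unit flow $f$ in the definition \eqref{def-Rtilde} of $\widehat{R}([-N,0],[-N,N]^c)$ must pass through $(0,N]$ and eventually split its mass between the two sink components $(-\infty,-N)$ and $(N,+\infty)$ of $[-N,N]^c$.

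Concretely, I would decompose $f$ into self-avoiding path flows via the algorithm of \cite[Page 54]{BDG20} (the same one invoked before Lemma \ref{merge}) and group the paths by their terminal component to produce $f = g_1 + g_2$, where $g_1$ collects paths ending in $(-\infty,-N)$ and $g_2$ collects paths ending in $(N,+\infty)$. The algorithm yields unidirectionality $(g_1)_{uv}(g_2)_{uv}\geq 0$, and since each path terminates at its first hit of $[-N,N]^c$, paths in $g_1$ never visit $(N,+\infty)$ while paths in $g_2$ never visit $(-\infty,-N)$. Combined with the constraint $f_{uv}=0$ on $\mathcal{E}_{[-N,0]\times[-N,N]^c}$ inherited from $f$, this forces the rescaled flows $g_1/|g_1|$ and $g_2/|g_2|$ to be admissible in the definitions \eqref{Rtilde} of $\widetilde{R}_n$ and of $\widehat{R}([-N,0],(N,+\infty))$ respectively; in particular, each edge in $\mathcal{E}_{(0,N]\times(N,+\infty)}$ forbidden by $\widetilde{R}_n$ is not used by $g_1$, because any such edge would force the corresponding path to terminate in $(N,+\infty)$ rather than in $(-\infty,-N)$.

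Using the unidirectional pointwise bound $f_{uv}^2\geq (g_1)_{uv}^2+(g_2)_{uv}^2$, the identity $|g_1|+|g_2|=1$, and the elementary inequality $|g_1|^2+|g_2|^2\geq 1/2$, the total energy of $f$ is at least $|g_1|^2\widetilde{R}_n+|g_2|^2\widehat{R}([-N,0],(N,+\infty))\geq \tfrac{1}{2}\min\{\widetilde{R}_n,\widehat{R}([-N,0],(N,+\infty))\}$. Taking the infimum over $f$ gives the deterministic inequality
\begin{equation*}
\widehat{R}([-N,0],[-N,N]^c)\ \geq\ \tfrac{1}{2}\min\bigl\{\widetilde{R}_n,\ \widehat{R}([-N,0],(N,+\infty))\bigr\}.
\end{equation*}
Applying Lemma \ref{Rn-tilde} with parameter $\varepsilon/2$ and Theorem \ref{Rn} (composed with \eqref{Rtilde-Rhat}) with parameter $\varepsilon/2$, a union bound finishes the proof with the same exponent $\delta$ and the constant shrunk by a factor of $1/2$.

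The only delicate point I anticipate is the admissibility claim of the second paragraph: one must verify that the stricter edge constraints hidden in the definitions of $\widetilde{R}_n$ and $\widehat{R}([-N,0],(N,+\infty))$ are indeed inherited by $g_1$ and $g_2$. This is controlled by the fact that the paths produced by the decomposition are self-avoiding and terminate at the first entry into $[-N,N]^c$, so no path in $g_1$ can traverse $(N,+\infty)$ and no path in $g_2$ can traverse $(-\infty,-N)$, which is precisely what rules out the otherwise-problematic edges.
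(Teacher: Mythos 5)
Your proposal is correct and follows essentially the same route as the paper: split the optimal unit flow $f$ according to which sink component $(-\infty,-N)$ or $(N,+\infty)$ it terminates in, bound the two pieces' energies by $\widetilde{R}_n$ and $\widehat{R}([-N,0],(N,+\infty))$ (which in turn is controlled via \eqref{Rtilde-Rhat} and Theorem~\ref{Rn}), and apply $|g_1|^2+|g_2|^2\geq 1/2$ together with a union bound. Your self-avoiding-path decomposition and the resulting pointwise bound $f_{uv}^2\geq (g_1)_{uv}^2+(g_2)_{uv}^2$ just make explicit the admissibility argument that the paper states more tersely via monotonicity of $\widehat{R}(\cdot,\cdot)$.
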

\begin{proof}
let $N=2^n$ for some $n\geq 1$. From Theorem \ref{Rn} and Lemma \ref{Rn-tilde}, there exists a constant $\delta>0$ (depending only on $\beta$) such that the following holds. For any $\varepsilon\in (0,1/2]$, there exists a constant $c=c(\varepsilon)>0$ (depending only on $\beta$ and $\varepsilon$) such that for all $n\geq 1$,
\begin{equation}\label{minR-1}
\mathds{P}[A]:=\mathds{P}\left[\min\left\{\widetilde{R}_n,\widehat{R}_n\right\}\geq c\e^{\delta n}\right]\geq 1-\varepsilon.
\end{equation}

Now assume that the event $A$ occurs. Let $f$ be the unit flow from $[-2^n,0]$ to $[-2^n,2^n]^c$, passing through the interval $(0,2^n]$, such that
$$
\widehat{R}([-2^n,0],[-2^n,2^n]^c)=\frac{1}{2}\sum_{i\sim j}f_{ij}^2.
$$
Denote $\theta_1$ and $\theta_2$ as portions of flow $f$ which enters intervals $(2^n,+\infty)$ and $(-\infty, -2^n)$, respectively. Then
$$
|\theta_1|=\sum_{i\in (0,2^n]}\sum_{j\in (2^n,+\infty)}f_{ij}\quad \text{and}
\quad |\theta_2|=\sum_{i\in (0,2^n]}\sum_{j\in (-\infty,-2^n)}f_{ij}.
$$
Clearly, $|\theta_1|+|\theta_2|=1$. In particular, if there is no edge joining $(0,2^n]$ and  $(-\infty,-2^n)$, then $\theta_2=0$.

Now according to the definition of $\widehat{R}(\cdot,\cdot)$ in \eqref{def-Rtilde}, the monotonicity property of $\widehat{R}$ in Lemma \ref{mono} (2), and the fact $|\theta_1|+|\theta_2|=1$, we obtain that on event $A$,
\begin{equation*}
\begin{aligned}
\widehat{R}([-2^n,0],[-2^n,2^n]^c)&\geq |\theta_1|^2\widehat{R}_n+ |\theta_2|^2\widetilde{R}_n\geq (|\theta_1|^2+|\theta_2|^2)c\e^{\delta n}\geq c\e^{\delta n}/2.
\end{aligned}
\end{equation*}
Combining this with \eqref{minR-1}, we complete the proof.
\end{proof}

\subsection{Proof of Theorem \ref{mr}}

The proof of Theorem \ref{mr} is divided into proofs of \eqref{point} and \eqref{box}. Since there are similarities between the two proofs, we will omit some details of the similar parts in the proof of \eqref{box}. Let us start by the

\begin{proof}[Proof of  \eqref{point} in Theorem \ref{mr}]
We begin by considering the case $N=2^n$ for some $n\geq 1$. According to the translation invariance of the model, we can see that
$\widehat{R}([-N,0],[-N,N]^c)$ and $\widehat{R}([0,N],[-N,N]^c)$ have the same distribution.
Therefore, from Lemma \ref{lem-Rhat}, there exists a constant $\delta>0$ (depending only on $\beta$) such that the following holds. For any $\varepsilon\in (0,1/2]$, there exists a constant $c=c(\varepsilon)>0$ (depending only on $\beta$ and $\varepsilon$) such that for all $n\geq 1$,
\begin{equation}\label{minR}
\mathds{P}[B]:=\mathds{P}\left[\min\left\{\widehat{R}([-N,0],[-N,N]^c),\widehat{R}([0,N],[-N,N]^c)\right\}\geq c\e^{\delta n}\right]\geq 1-\varepsilon.
\end{equation}

Now assume that the event $B$ occurs.  Let $f$ be the unit flow from 0 to $[-N,N]^c$ that minimizes the right-hand side of \eqref{eff-resis}, that is,
\begin{equation*}
R(0,[-N,N]^c)=\frac{1}{2}\sum_{i\sim j}f^2_{ij}.
\end{equation*}
Since $f$ must flow into $[-N,N]^c$ finally, it must pass through either $[-N,0]$ or $[0,N]$. 
In light of this, let $\theta_1$ be the portion of flow $f$ that passes through the interval $(0,N]$ and then flow into $[-N,N]^c$.
Similarly, we define $\theta_2$ by replacing $(0,N]$ with $[-N,0)$ in the definition of $\theta_1$.
Then we have $\max\{|\theta_1|,|\theta_2|\}\geq 1/2$ (see \eqref{flow+-} for more details).
Combining this with the definition of the effective resistance $\widehat{R}(\cdot,\cdot)$ in \eqref{def-Rtilde}, we get that on the event $B$,
\begin{equation}\label{R2m-lb}
\begin{aligned}
R(0,[-N,N]^c)&\geq \max\left\{|\theta_1|^2\widehat{R}([-N,0],[-N,N]^c), |\theta_2|^2\widehat{R}([0,N],[-N,N]^c)\right\}\\
&\geq  \max\left\{|\theta_1|^2, |\theta_2|^2\right\}c\e^{\delta n}\geq \frac{c}{4}\e^{\delta n}=:c_1N^{\delta'}.
\end{aligned}
\end{equation}
Here $c_1=c/4$ is a constant depending only on $\beta$ and $\varepsilon$, and $\delta'=\delta/\log2$ is a constant depending only on $\beta$.
 Hence, from \eqref{minR} and \eqref{R2m-lb} we can conclude
\begin{equation}\label{2m}
\mathds{P}\left[R(0,[-N,N]^c)\geq c_1N^{\delta'}\right] \geq \mathds{P}[B]\geq 1-\varepsilon.
\end{equation}

For general $N\geq 1$, let $n=\lfloor \log_2 N\rfloor$ such that $N=2^{n+p}$ with $p\in [0,1)$. Since the effective resistance $R(0,[-r,r]^c)$ is nondecreasing with $r$, we have
$$
R(0,[-N,N]^c)\geq R(0,[-2^n,2^n]^c).
$$
Therefore, for any $\varepsilon \in (0,1/2]$, by \eqref{2m} we obtain that
$$
\mathds{P}\left[R(0,[-N,N]^c)\geq c_2 N^{\delta'} \right]\geq \mathds{P}\left[R(0,[-2^n,2^n]^c)\geq c_2N^{\delta'}\right]\geq 1-\varepsilon,
$$
where $c_2:=c_12^{-\delta'}$ is a constant depending only on $\beta$ and $\varepsilon$. Hence, the proof is complete.
\end{proof}

We next turn to the effective resistance $R([-N,N],[-2N,2N]^c)$.  We also first consider the case $N=2^n$ for some $n\geq 1$. Similar to \eqref{Rtilde}, we define
\begin{equation*}
\begin{aligned}
\widetilde{R}([-2N,N],(-\infty,-2N))&=\inf\left\{\frac{1}{2}\sum_{i\sim j}f^2_{ij}:\ f\text{ is a unit flow from $[-2N,N]$ to }(-\infty,-2N),\right.\\
&\quad \quad  \text{and $f_{ij}=0$ for all }\langle i,j\rangle\in \mathcal{E}_{[-2N,N]\times[-2N,2N]^c}\cup \mathcal{E}_{(N,2N]\times (2N,+\infty)}\Bigg\}
\end{aligned}
\end{equation*}
as the effective resistance generated by unit flows from $[-2N,N]$ to $(-\infty,-2N)$, passing through the interval $(N,2N]$. Note that if there is no edge joining $(N,2N]$ and $(-\infty,-2N)$, we then have $\widetilde{R}([-2N,N],(-\infty,-2N))=\infty$. Using similar arguments for \eqref{Rtilde-Rhat} and the translation invariance of the model, we have that for all $x\geq 0$,
\begin{equation*}
\begin{aligned}
\mathds{P}\left[\widetilde{R}([-2N,N],(-\infty,-2N))\geq x\right]&\geq \mathds{P}\left[\widehat{R}([-2N,N],(2N,+\infty))\geq x\right]\\
&\geq\mathds{P}\left[ \widehat{R}([-\infty,N],(2N,+\infty))\geq x\right]=\mathds{P}\left[\widehat{R}_n\geq x\right].
\end{aligned}
\end{equation*}
Combining this with similar arguments in the proof of Lemma \ref{lem-Rhat}, we obtain the following estimate for $\widehat{R}([-2N,N],[-2N,2N]^c)$, which is defined in \eqref{def-Rtilde} by replacing $[-N,0]$ and $[-N,N]^c$  with $[-2N,N]$ and $[-2N,2N]^c$, respectively.
\begin{lemma}\label{lem-Rhat-box}
For any $\beta>0$, there is a constant $\delta>0$ {\rm(}depending only on $\beta${\rm)} such that the following holds. For any $\varepsilon\in (0,1/2]$, there exists a constant $c=c(\varepsilon)>0$  {\rm(}depending only on $\beta$ and $\varepsilon${\rm)} such that for all $N=2^n\ (n\geq 1)$,
$$
\mathds{P}\left[\widehat{R}([-2N,N],[-2N,2N]^c)\geq c\e^{\delta n}\right]\geq 1-\varepsilon.
$$
\end{lemma}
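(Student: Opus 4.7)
The plan is to mirror the proof of Lemma~\ref{lem-Rhat} with the intervals adapted to the asymmetric setting $[-2N,N]$ versus $[-2N,2N]^c$.

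First, combining the stochastic domination displayed just above the lemma with Theorem~\ref{Rn} and a union bound (applied with $\varepsilon/2$ to each of the two auxiliary resistances), I would obtain a constant $\delta>0$ depending only on $\beta$ such that for any $\varepsilon\in(0,1/2]$ there is $c=c(\varepsilon)>0$ with
\[
\mathds{P}[B]:=\mathds{P}\left[\min\bigl\{\widetilde{R}([-2N,N],(-\infty,-2N)),\,\widehat{R}([-2N,N],(2N,+\infty))\bigr\}\geq c\e^{\delta n}\right]\geq 1-\varepsilon.
\]

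On the event $B$, take a unit flow $f$ attaining the infimum in the definition of $\widehat{R}([-2N,N],[-2N,2N]^c)$, so that $f_{uv}=0$ on $\mathcal{E}_{[-2N,N]\times[-2N,2N]^c}$ and $f$ necessarily traverses $(N,2N]$. Using the self-avoiding-path decomposition recalled in Section~\ref{sect-Rhat}, split $f$ as $\theta_1+\theta_2$, where $\theta_1$ (resp.\ $\theta_2$) aggregates the self-avoiding paths whose terminal vertex lies in $(-\infty,-2N)$ (resp.\ $(2N,+\infty)$); then $|\theta_1|+|\theta_2|=1$ and $\theta_1,\theta_2$ are unidirectional. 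Because each such path stops at its first vertex in the sink $[-2N,2N]^c$, the paths contributing to $\theta_1$ never enter $(2N,+\infty)$, so in particular $\theta_1$ uses no edges in $\mathcal{E}_{(N,2N]\times(2N,+\infty)}$. Hence $\theta_1/|\theta_1|$ is admissible in the variational problem defining $\widetilde{R}([-2N,N],(-\infty,-2N))$, and symmetrically $\theta_2/|\theta_2|$ is admissible for $\widehat{R}([-2N,N],(2N,+\infty))$. Unidirectionality and the monotonicity in Lemma~\ref{mono}~(2) then give
\[
\widehat{R}([-2N,N],[-2N,2N]^c)\;\geq\;|\theta_1|^2\,\widetilde{R}([-2N,N],(-\infty,-2N))+|\theta_2|^2\,\widehat{R}([-2N,N],(2N,+\infty)),
\]
and since $|\theta_1|^2+|\theta_2|^2\geq (|\theta_1|+|\theta_2|)^2/2=1/2$, on $B$ the right-hand side is at least $(c/2)\e^{\delta n}$, which yields the desired conclusion with constant $c/2$ and the same exponent $\delta$.

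I do not foresee a substantive obstacle: the stochastic domination is given just above the lemma, the exponential lower bound for $\widehat{R}_n$ is Theorem~\ref{Rn}, and the flow-splitting scheme is the one already used for Lemma~\ref{lem-Rhat}. The only point that merits explicit care is the admissibility of $\theta_1/|\theta_1|$ as a competitor for $\widetilde{R}([-2N,N],(-\infty,-2N))$, i.e.\ the verification that $\theta_1$ avoids the forbidden edge set $\mathcal{E}_{(N,2N]\times(2N,+\infty)}$; as noted, this is immediate from the self-avoiding structure of the path decomposition.
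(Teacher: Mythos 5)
Your proof is correct and follows essentially the same route the paper intends (the paper just points to ``similar arguments in the proof of Lemma~\ref{lem-Rhat}''): combine the stochastic domination stated above the lemma with Theorem~\ref{Rn} to control both $\widetilde{R}([-2N,N],(-\infty,-2N))$ and $\widehat{R}([-2N,N],(2N,+\infty))$, split the minimizing flow into the two pieces by terminal side, and use unidirectionality plus $|\theta_1|^2+|\theta_2|^2\geq 1/2$. The one step you spell out that the paper leaves implicit, namely why $\theta_1/|\theta_1|$ is an admissible competitor for $\widetilde{R}([-2N,N],(-\infty,-2N))$ (paths in the self-avoiding decomposition terminate upon first hitting $[-2N,2N]^c$, so the $(-\infty,-2N)$-piece uses no edges of $\mathcal{E}_{(N,2N]\times(2N,+\infty)}$), is exactly where care is needed, and you handled it correctly.
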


We now can present the

\begin{proof}[Proof of  \eqref{box} in Theorem \ref{mr}]
Throughout the proof, we condition on $\mathcal{E}_{[-N,N]\times [-2N,2N]^c}=\varnothing$, i.e., intervals $[-N,N]$ and $[-2N,2N]^c$ are not directly connected by any long edge.

We begin by considering the case $N=2^n$ for some $n\geq 1$. It is clear from the translation invariance of the model that $\widehat{R}([-2N,N],[-2N,2N]^c)$ and $\widehat{R}([-N,2N],[-2N,2N]^c)$ have the same distribution. Hence, from Lemma \ref{lem-Rhat-box}, there exists a constant $\delta>0$ (depending only on $\beta$) such that the following holds. For any $\varepsilon\in (0,1/2]$, there exists a constant $c=c(\varepsilon)>0$ (depending only on $\beta$ and $\varepsilon$) such that for all $n\geq 1$,
\begin{equation}\label{minR-2}
\mathds{P}[C]:=\mathds{P}\left[\min\left\{\widehat{R}([-2N,N],[-2N,2N]^c),\widehat{R}([-N,2N],[-2N,2N]^c)\right\}\geq c\e^{\delta n}\right]\geq 1-\varepsilon.
\end{equation}
Note that, by the definition of $\widehat{R}(\cdot,\cdot)$, we can observe that the two effective resistances in \eqref{minR-2} are both independent of the edge set $\mathcal{E}_{[-N,N]\times [-2N,2N]^c}$. Therefore, \eqref{minR-2} implies that
\begin{equation}\label{minR-cond}
\mathds{P}[C\big|\mathcal{E}_{[-N,N]\times [-2N,2N]^c}=\varnothing]=\mathds{P}[C]\geq 1-\varepsilon.
\end{equation}

Now assume that the event $C$ occurs. Let $\widetilde{f}$ be the unit flow from $[-N,N]$ to $[-2N,2N]^c$ such that
$$
R([-N,N],[-2N,2N]^c)=\frac{1}{2}\sum_{i\sim j} \widetilde{f}_{ij}^2.
$$
Similar to the argument in the proof of  \eqref{point}, we can define $\widetilde{\theta}_1$ as the portion of flow $\widetilde{f}$ that passes through the interval $(N,2N]$ and then flow into $[-2N,2N]^c$, and define $\widetilde{\theta}_2$ by replacing $(N,2N]$ with $[-2N,-N)$ in the definition of $\widetilde{\theta}_1$. Then we also have $\max\{|\widetilde{\theta}_1|, |\widetilde{\theta}_2|\}\geq 1/2$. Therefore, we get that on the event $C$,
$$
\begin{aligned}
R([-N,N],[-2N,2N]^c)&\geq \max\left\{|\widetilde{\theta}_1|^2\widehat{R}([-2N,N],[-2N,2N]^c),\ |\widetilde{\theta}_2|^2\widehat{R}([-N,2N],[-2N,2N]^c)\right\}\\
&\geq \max\left\{|\widetilde{\theta}_1|^2,\ |\widetilde{\theta}_2|^2\right\}c\e^{\delta n}\geq \frac{c}{4}\e^{\delta n}:=c_1N^{\delta'}.
\end{aligned}
$$
Here $c_1=c/4$ is a constant depending only on $\beta$ and $\varepsilon$, and $\delta'=\delta/\log 2$ is a constant depending only on $\beta$. Hence, combining this with \eqref{minR-cond}, we can conclude
$$
\mathds{P}\left[R([-N,N],[-2N,2N]^c)\geq c_1N^{\delta'}\big|\mathcal{E}_{[-N,N]\times [-2N,2N]^c}=\varnothing\right]\geq 1-\varepsilon.
$$

For general $N\geq 1$, we can complete the proof by using the similar argument in the proof of \eqref{point}.
\end{proof}

\bigskip

\noindent{\bf Acknowledgement.} \rm
We warmly thank Jian Wang for stimulating discussions at an early stage of the project. L.-J. Huang would like to thank School of Mathematical Sciences, Peking University for their hospitality during
her visit. J.\ Ding is partially supported by NSFC Key Program Project No.\ 12231002 and the Xplorer prize. L.-J. Huang is partially supported by National Key R\&D Program of China No.\ 2023YFA1010400.

\bibliographystyle{plain}
\bibliography{resistance-ref}

\end{document}